\documentclass[12pt,reqno]{amsart}
\AtBeginDocument{\addtocontents{toc}{\protect\setlength{\parskip}{0pt}}}

\usepackage{latexsym, amsmath,amssymb}
\usepackage{hyperref}
\usepackage{tikz}
\usetikzlibrary{calc,trees,shadows,positioning,arrows,chains,shapes.geometric,
decorations.pathreplacing,decorations.pathmorphing,shapes,
matrix,shapes.symbols,patterns,intersections,fit}

\theoremstyle{plain}
\newtheorem{thm}{Theorem}[section]
\newtheorem{lem}[thm]{Lemma}
\newtheorem{prop}[thm]{Proposition}
\newtheorem{cor}[thm]{Corollary}

\theoremstyle{definition}
\newtheorem{defi}[thm]{Definition}
\newtheorem{example}[thm]{Example}

\newtheorem{rem}[thm]{Remark}
\numberwithin{equation}{section}

	\newcommand{\eps}{\varepsilon}
		
				\newcommand{\gs}{\gtrsim}
			\newcommand{\la}{\lambda}
			\newcommand{\1}{{\rm 1\hspace*{-0.4ex}%
					\rule{0.1ex}{1.52ex}\hspace*{0.2ex}}}

\begin{document}

\title[Orthonormal Strichartz estimates on manifolds]{Strichartz estimates for orthonormal systems on compact manifolds: the non-sharp region}

\date{\today. \qquad \emph{2020 AMS MSC.}    42B37, 58J50, 35P05, 35R01, 35S30}
	
\keywords{Strichartz estimates, orthonormal systems, compact manifolds, non-sharp  and full admissible exponents}


\author{Hongzhou Ji, Liping Xu and An Zhang}

\address{School of Mathematical Sciences, Beihang University, Beijing, 100191, PR China}
\email{jhz2509@buaa.edu.cn}
\email{xuliping.p6@gmail.com}
\email{anzhang@buaa.edu.cn}
\maketitle
\begin{abstract}
We establish new Strichartz estimates for orthonormal systems on compact Riemannian manifolds in the non-sharp admissible region of exponents, covering wave, Klein-Gordon, and fractional Schr\"odinger equations. Our approach combines the result of Wang-Zhang-Zhang \cite{wang2025strichartz} on the sharp admissible line with a Lieb-Sobolev inequality derived from a recent Cwikel estimate due to Sukochev-Yang-Zanin \cite{sukochev2025singular}, along with an alternative globalization method based on localized weak Lorentz estimates. Our results extend the Euclidean results of Bez-Hong-Lee-Nakamura-Sawano \cite{bez2019strichartz} and Bez-Lee-Nakamura \cite{bez2021strichartz}, as well as the classical single-function estimates on manifolds due to Kapitanski \cite{kapitanski1989some}, Burq-G\'erard-Tzvetkov \cite{MR2058384}, and Dinh \cite{dinh2016strichartz}.
\end{abstract}
\tableofcontents  

\section{Introduction}
\label{int}

Let $(M,g)$ be a $d$-dimensional smooth compact Riemannian manifold without boundary, with $d\geq 1$, and let 
$\Delta = -\Delta_g$, where $\Delta_g$ denotes the standard Laplace–Beltrami operator on $M$. Let $P$ be a dispersive operator.
The classical (single-function) Strichartz estimate for the solution $e^{itP}f$ to the following initial value problem
\begin{equation*}
    \left\{\begin{array}{l}
        i\partial_t u + P u = 0, \quad (x,t) \in M \times \mathbb R,\\
        u(\cdot,0) = f,
    \end{array}\right.
\end{equation*} 
has important applications in the well-posedness analysis of this equation, which reduces to the fractional Schr\"odinger equation when $P = \Delta^{\alpha/2}$ ($\alpha \neq 0,1$), to the wave equation when $P = \sqrt{\Delta}$, and to the Klein–Gordon equation when $P = \sqrt{1+\Delta}$. In this paper, we mainly consider \emph{orthonormal Strichartz estimates} of the form
\begin{equation}\label{eq:str}
	\Big\|\sum_j \nu_j |e^{itP} f_j|^2\Big\|_{L_t^{p/2} L_x^{q/2}(I \times M)} \lesssim \|\nu\|_{\ell^\beta}
\end{equation}
for orthonormal systems $(f_j)_j$ in a suitable Hilbert space, and complex sequences $\nu = (\nu_j)_j$ in $\ell^\beta$, where $I \subset \mathbb R$ is a bounded interval.
This orthonormal inequality describes the  evolution of large-scale quantum systems.
In order to facilitate the analysis, we first introduce the full \emph{admissible} Lebesgue exponents.

\begin{defi}
    Let $\sigma > 0$. A pair of exponents $(p,q) \in [2,\infty] \times [2,\infty)$ is called $\sigma$-admissible if 
 \begin{equation}\label{eq:exp-fulladm}
        \frac{1}{p} + \frac{\sigma}{q} \leq \frac{\sigma}{2}.
    \end{equation}
    Furthermore, $(p,q)$ is called \emph{non-sharp} $\sigma$-admissible if
 \begin{equation}\label{eq:exp-nonsharp}
 \frac{1}{p} + \frac{\sigma}{q} < \frac{\sigma}{2},\end{equation}
    and \emph{sharp} $\sigma$-admissible if
 \begin{equation}\label{eq:exp-sharp}
 \frac{1}{p} + \frac{\sigma}{q} = \frac{\sigma}{2}.\end{equation}
\end{defi}

In Figure \ref{fig1}, when $\sigma = d/2$, the line segment $BC$ corresponds to the \emph{sharp admissible line}, while the \emph{non-sharp admissible region} corresponds to the quadrilateral $OBCD$ excluding the segments $OD$ and $BC$. We will establish sharp orthonormal Strichartz estimates for \emph{non-sharp} admissible exponents on general compact manifolds.

\vskip2mm
\noindent\textbf{Notations.} Throughout this paper, we use Vinogradov's notation $X \lesssim_a Y, X \approx_a Y$. We write $\langle \xi \rangle = \sqrt{1+|\xi|^2}$, and similarly $\langle D \rangle$ for operator calculus.

\subsection{Some known results}
The Euclidean Strichartz estimates are relatively well-developed. 
The classical estimate for single functions was first established by Strichartz \cite{strichartz1977restrictions} for the wave equation, and by Ginibre--Velo \cite{ginibre1992smoothing} for the Schr\"odinger equation, in the non-endpoint cases.
Keel--Tao \cite{keel1998endpoint} established a general abstract framework for Strichartz estimates and obtained the full region of admissible exponent pairs, including the previously inaccessible endpoint cases. 
Guo--Peng \cite{MR2353675} obtained an endpoint Strichartz estimate for the kinetic transport equation. 
Guo--Peng--Wang \cite{guo2008decay} studied the decay of a class of wave equations in a unified way. 
See also \cite{MR1101221,MR2787438,MR3286047} for fractional or radial cases and applications.

The study of functional inequalities for orthonormal systems dates back to Lieb--Thirring \cite{LT}, and these inequalities play a crucial role in quantum mechanics.
Recently, in the Schr\"odinger setting, Frank--Lewin--Lieb--Seiringer \cite{frank2014strichartz} extended the classical single-function Strichartz estimates to orthogonal families in $L^2(\mathbb{R}^d)$.
Frank--Sabin \cite{frank2017restriction} later extended these estimates to the full sharp admissible line via a duality principle and complex interpolation for the Schr\"odinger, wave, and Klein--Gordon equations. 
Bez--Hong--Lee--Nakamura--Sawano \cite{bez2019strichartz}
considered initial data with Sobolev regularity for non-sharp admissible exponents for the Schr\"odinger propagator. They developed approaches based on Lorentz
norms and localized estimates. 
Later, Bez--Lee--Nakamura \cite{bez2021strichartz} introduced weighted oscillatory integral estimates
and extended the scope of orthonormal Strichartz estimates to the full admissible region and to the wave, Klein--Gordon, and fractional Schr\"odinger equations in Euclidean spaces.  Feng--Mondal--Song--Wu \cite{feng2026orthonormal} investigated a wide class of dispersive semigroups.

On compact Riemannian manifolds, single-function Strichartz estimates were obtained by Kapitanski \cite{kapitanski1989some} for the wave equation, by Burq--G\'erard--Tzvetkov \cite{MR2058384} for the Schr\"odinger equation, by Dinh \cite{dinh2016strichartz} for the fractional Schr\"odinger equation, and by Cacciafesta--Danesi--Meng \cite{cacciafesta2024strichartz} for the wave, Klein--Gordon and Dirac equations. See also the interesting works \cite{MR1101221,MR1209299,MR3273490,MR3374964,MR3692323,MR4436143} for the periodic setting.

\begin{thm}\label{sig0}
    Let $d\geq 1$ and $I\subset \mathbb{R}$ be a bounded interval. 
    For $\frac{d}{2}$-admissible pairs $(p,q)$, and $\alpha \in (0,\infty)\setminus \{1\}$, define  
    \begin{align}\label{eq:gama}
        \gamma_{\alpha}(p,q)=\begin{cases}
            \frac{d}{2}-\frac{d}{q}-\frac{1}{p}, & \alpha>1,\\[4pt]
            \frac{d}{2}-\frac{d}{q}-\frac{\alpha}{p}, & \alpha\in(0,1).
        \end{cases}
    \end{align}
    Then for all $f\in H^{\gamma_{\alpha}(p,q)}(M)$, 
    \[
        \|e^{it\Delta^{\alpha/2}} f\|_{L_t^p L_x^q(I\times M)} \lesssim \|f\|_{H^{\gamma_{\alpha}(p,q)}(M)}.
    \]
\end{thm}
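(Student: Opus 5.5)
We follow the standard scheme of Burq--G\'erard--Tzvetkov and Dinh: a Littlewood--Paley decomposition, a dispersive estimate on short time intervals adapted to each frequency, a $TT^*$ argument on those intervals, and finally summation over intervals and frequencies.

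\emph{Frequency localization.} Take a smooth Littlewood--Paley partition $1=\varphi_0(\sqrt{\Delta})+\sum_{k\ge1}\varphi(2^{-k}\sqrt{\Delta})$. By the Littlewood--Paley square function estimate on compact manifolds (valid for $2\le q<\infty$) and Minkowski's inequality (since $p,q\ge2$),
\[
\|e^{it\Delta^{\alpha/2}}f\|_{L^p_tL^q_x}\lesssim \Big(\sum_k \|e^{it\Delta^{\alpha/2}}\varphi(2^{-k}\sqrt\Delta)f\|_{L^p_tL^q_x}^2\Big)^{1/2}.
\]
It therefore suffices to prove, for $f_k=\varphi(2^{-k}\sqrt\Delta)f$,
\[
\|e^{it\Delta^{\alpha/2}}f_k\|_{L^p_t L^q_x(I\times M)}\lesssim 2^{k\gamma_\alpha(p,q)}\|f_k\|_{L^2}.
\]
The low-frequency piece is handled by Sobolev embedding. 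Note also that non-sharp pairs reduce to sharp ones. Given $(p,q)$ in the region, choose $\tilde p\le p$ with $(\tilde p,q)$ on the sharp line. By Bernstein's inequality in time (frequency $\lesssim 2^{k\alpha}$) together with H\"older on the bounded interval $I$, or more simply by H\"older over the short intervals introduced below, one loses exactly the factor $2^{k(\cdot)(1/\tilde p-1/p)}$ that matches the difference of the $\gamma_\alpha$'s. So it is enough to treat sharp $d/2$-admissible pairs.

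\emph{Dispersive estimate on short times (main obstacle).} The key input is a frequency-localized dispersive bound
\[
\|e^{it\Delta^{\alpha/2}}\varphi(2^{-k}\sqrt\Delta)\|_{L^1\to L^\infty}\lesssim 2^{kd}(1+2^{k\alpha}|t|)^{-d/2}
\]
for $|t|\lesssim 2^{-k(\alpha-1)}$ when $\alpha>1$, and for $|t|\lesssim 1$ when $\alpha<1$ (there the bound holds with the appropriate decay, as in Dinh). Following Burq--G\'erard--Tzvetkov, one proves it by writing the semiclassical propagator with $h=2^{-k}$ as a WKB/Fourier integral operator parametrix $e^{i\phi(t,x,y)/h}a(t,x,y,h)$ in local coordinates, valid for semiclassical times $|t|h^{\alpha-1}\lesssim1$ (since $\alpha\ne1$, the symbol $|\xi|^\alpha$ has nondegenerate Hessian). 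Stationary phase then gives the decay, and the remainder is $O(h^\infty)$. Constructing this parametrix with uniform control is the technical heart of the proof. For $\alpha<1$ the group velocity $\sim h^{1-\alpha}$ is small, so the parametrix survives up to times of order one and no loss appears, consistent with the $\alpha/p$ in $\gamma_\alpha$.

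\emph{$TT^*$ and summation.} On an interval $J$ of length $2^{-k(\alpha-1)}$ (or length $1$ if $\alpha<1$), rescale $t\mapsto 2^{-k\alpha}t$. The dispersive bound then becomes the standard $|t|^{-d/2}$ decay with energy estimate. The Keel--Tao theorem (including the endpoint when $d\ge3$) gives
\[
\|e^{it\Delta^{\alpha/2}}f_k\|_{L^p(J;L^q)}\lesssim 2^{k(d/2-d/q-\alpha/p)}\|f_k\|_{L^2}.
\]
Cover $I$ by $\approx 2^{k(\alpha-1)}$ such intervals and use unitarity; the $p$-th powers add, producing the extra factor $2^{k(\alpha-1)/p}$. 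The exponent becomes $d/2-d/q-1/p$, which equals $\gamma_\alpha$ for $\alpha>1$. For $\alpha<1$ no summation is needed. Combining with the square function step completes the proof.
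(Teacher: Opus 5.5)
Your argument for \emph{sharp} pairs is the standard Burq--G\'erard--Tzvetkov/Dinh scheme. The paper does not reprove Theorem~\ref{sig0}; it quotes \cite{MR2058384, dinh2016strichartz}. Your bookkeeping there is right: Keel--Tao on intervals of length $\approx 2^{-k(\alpha-1)}$ with dispersive constant $B=2^{kd(1-\alpha/2)}$ gives $B^{1/2-1/q}=2^{k(d/2-d/q-\alpha/p)}$ on the sharp line, and adding $p$-th powers over $\approx 2^{k(\alpha-1)}$ intervals yields $\gamma_\alpha(p,q)$. One slip: the parametrix lives on $|t|\lesssim h^{\alpha-1}$, i.e.\ $|t|h^{1-\alpha}\lesssim 1$, not $|t|h^{\alpha-1}\lesssim 1$.

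The step that fails is your reduction of non-sharp pairs to sharp ones.

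First, a sharp pair $(\tilde p,q)$ with the same $q$ need not exist. For $d\ge 3$ and any admissible pair with $q>\frac{2d}{d-2}$ (e.g.\ $d=3$, $(\frac1q,\frac1p)=(\frac1{100},\frac{49}{100})$), the sharp line at that $q$ gives $\frac1{\tilde p}=\frac d4-\frac d{2q}>\frac12$. So $\tilde p<2$, outside the range of Keel--Tao and of the theorem.

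Second, even when some $\tilde p\ge 2$ exists, you must pass from $L^{\tilde p}_t$ to $L^p_t$ with $p>\tilde p$, which requires a \emph{gain} in time integrability. H\"older, whether on $I$ or on the short intervals, goes the opposite way. Bernstein in time on all of $I$ (time frequencies $\approx 2^{k\alpha}$) costs $2^{k\alpha(1/\tilde p-1/p)}$. For $\alpha>1$ this exceeds the permitted $2^{k(1/\tilde p-1/p)}=2^{k(\gamma_\alpha(p,q)-\gamma_\alpha(\tilde p,q))}$. It would only close if done on each short interval, with smooth time cut-offs and control of their tails, before summing.

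The clean fix is to move in $q$ rather than in $p$. Keep $p$ and set $\frac1{\tilde q}=\frac12-\frac2{dp}$. Admissibility of $(p,q)$ with $p\ge 2$, $q<\infty$ gives $2\le\tilde q\le q$ and $\tilde q<\infty$ (because $p>4/d$), and $\tilde q\le\frac{2d}{d-2}$ when $d\ge3$. So $(p,\tilde q)$ is a sharp pair handled by your argument. Then take $\tilde\varphi$ with $\tilde\varphi\varphi=\varphi$ and use spatial Bernstein, $\|\tilde\varphi(2^{-k}\sqrt\Delta)\|_{L^{\tilde q}(M)\to L^q(M)}\lesssim 2^{kd(1/\tilde q-1/q)}$. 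This costs exactly $\frac d{\tilde q}-\frac dq=\gamma_\alpha(p,q)-\gamma_\alpha(p,\tilde q)$, for both ranges of $\alpha$. This is the usual Sobolev-embedding reduction.
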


\begin{thm}\label{sig1}
    Let $d\geq 2$, $m\geq 0$ and $I\subset \mathbb{R}$ be a bounded interval.
    For $\frac{d-1}{2}$-admissible pairs $(p,q)$, define  
    \begin{align}\label{eq:gam}
        \gamma(p,q)=\frac{d}{2}-\frac{d}{q}-\frac{1}{p}.
    \end{align}
    Then for all $f\in H^{\gamma(p,q)}(M)$,  
    \[
        \|e^{it \sqrt{\Delta+m^2}} f\|_{L_t^p L_x^q(I\times M)} \lesssim \|f\|_{H^{\gamma(p,q)}(M)}.
    \]
\end{thm}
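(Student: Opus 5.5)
The plan is to localize in frequency, prove a dispersive estimate for each dyadic piece on a fixed short time interval, run a $TT^*$ argument there, and then reassemble in frequency with a Littlewood–Paley square function and in time by finitely many translates. Write $P=\sqrt{\Delta+m^2}$. Fix $\varphi\in C_c^\infty((1/2,2))$ generating a Littlewood–Paley partition $1=\varphi_0(\lambda)+\sum_{k\ge1}\varphi(2^{-k}\lambda)$, and set $P_k=\varphi(2^{-k}\sqrt{\Delta})$.

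\textbf{Step 1: dispersive estimate.} Using the Hadamard/Lax parametrix for the half-wave group, valid on $M$ for $|t|\le\delta$ with $\delta$ smaller than the injectivity radius, I would establish
\[
\|e^{itP}P_k\|_{L^1(M)\to L^\infty(M)}\lesssim 2^{kd}(1+2^k|t|)^{-\frac{d-1}{2}},\qquad |t|\le\delta .
\]
This is the bound of Kapitanski and of Cacciafesta--Danesi--Meng. The essential point is that $\delta$ is independent of $k$, because of finite propagation speed. For $m>0$, the symbol $\sqrt{\lambda^2+m^2}-\lambda$ is a zero-order symbol on the support of $\varphi(2^{-k}\cdot)$. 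It therefore changes only the amplitude of the parametrix, not its phase, and the same bound holds. Interpolating this with $L^2$ unitarity gives
\[
\|e^{i(t-s)P}P_k^2\|_{L^{q'}\to L^{q}}\lesssim 2^{kd(1-\frac2q)}(1+2^k|t-s|)^{-\frac{d-1}{2}(1-\frac2q)} .
\]

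\textbf{Step 2: $TT^*$ on $J=[0,\delta]$ for each frequency block.} Set $\sigma=\frac{d-1}{2}$ and $a=\sigma(1-\frac2q)$. Then $(p,q)$ is $\sigma$-admissible exactly when $a\ge\frac2p$. Apply $TT^*$ and estimate the resulting time convolution on $J$.
\begin{itemize}
\item \emph{Sharp case} $a=\frac2p$ with $p>2$: bound the kernel by $2^{kd(1-2/q)}(2^k|t-s|)^{-2/p}$ and apply Hardy--Littlewood--Sobolev in time.
\item \emph{Sharp endpoint} $p=2$: this is Keel--Tao's endpoint argument applied to the rescaled problem. It is available because $q<\infty$ excludes the forbidden pair $(d,p,q)=(3,2,\infty)$.
\item \emph{Non-sharp case} $a>\frac2p$: the kernel $(1+2^k|t|)^{-a}$ lies in $L^{r}(\mathbb R)$ with $1+\frac1r=\frac2{p'}$, since $ar>1$. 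Its $L^r$ norm is $\lesssim 2^{-k/r}=2^{-2k/p}$, so Young's inequality applies. This handles $p=2$ (where $r=1$) as well, so no Keel--Tao endpoint is needed.
\end{itemize}
In all cases one obtains
\[
\|e^{itP}P_kf\|_{L^p_tL^q_x(J\times M)}\lesssim 2^{k(\frac d2-\frac dq-\frac1p)}\|f\|_{L^2}=2^{k\gamma(p,q)}\|f\|_{L^2}.
\]
The low-frequency piece $\varphi_0(\sqrt\Delta)$ is handled directly: $e^{itP}\varphi_0(\sqrt\Delta)$ is bounded $L^2\to L^\infty$ uniformly in $t$, and $M$ is compact.

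\textbf{Step 3: summation in frequency and time.} Since $2\le p,q<\infty$, the Littlewood--Paley square function estimate on $M$ (valid for spectral multipliers of $\sqrt\Delta$), followed by Minkowski's inequality in $L^{p/2}_tL^{q/2}_x$, gives
\[
\|e^{itP}f\|_{L^p_tL^q_x(J\times M)}\lesssim\Big(\sum_k\|e^{itP}\tilde P_kf\|_{L^p_tL^q_x(J\times M)}^2\Big)^{1/2}
\lesssim\Big(\sum_k 2^{2k\gamma}\|\tilde P_kf\|_{L^2}^2\Big)^{1/2}\approx\|f\|_{H^{\gamma}},
\]
where $\tilde P_k$ is a slightly fattened projector. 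Finally, cover $I$ by $O(|I|/\delta)$ translates of $J$. The group property $e^{i(t+t_0)P}=e^{itP}e^{it_0P}$ and unitarity of $e^{it_0P}$ on $H^\gamma$ reduce each translate to $J$ itself. The number of translates is independent of $k$, so no additional loss arises.

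The main obstacle is Step 1: obtaining the dispersive bound uniformly on a fixed time interval $\delta$ independent of $k$, including the Klein--Gordon case. I would use the wave parametrix $\int e^{i(\phi(x,y,\xi)+t|\xi|)}a\,d\xi$ together with stationary phase on the sphere. This yields the decay rate $\frac{d-1}{2}$, which is the reason the relevant admissibility parameter is $\frac{d-1}{2}$ rather than $\frac d2$.
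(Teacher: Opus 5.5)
Your proposal is correct. Note that the paper never proves this theorem: it is quoted as a known result from Kapitanski and Cacciafesta--Danesi--Meng.

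Your route is the standard proof from that literature:
\begin{itemize}
\item the frequency-localized dispersive bound on a time interval independent of $k$, which is exactly Lemma~\ref{dislem} and which the paper also imports;
\item $TT^*$ with HLS, Young, or Keel--Tao, giving $2^{k\gamma(p,q)}$ on each dyadic block;
\item reassembly by Littlewood--Paley in $x$, Minkowski in $L^{p/2}_tL^{q/2}_x$, and finitely many time translates.
\end{itemize}
This is also the single-function analogue of Proposition~\ref{localw1} in Section~\ref{sec:proof2}. There, for orthonormal systems, the paper reassembles the dyadic pieces through the weak-Lorentz globalization Lemma~\ref{lem2} rather than a square function. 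That step in fact takes Theorem~\ref{sig1} as input, for the uniform boundedness hypothesis.

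There are two small slips to fix.
\begin{itemize}
\item The Young exponent relation should read $1+\frac1p=\frac1r+\frac1{p'}$, that is, $\frac1r=\frac2p$. Your later claims ($2^{-k/r}=2^{-2k/p}$, $r=1$ at $p=2$, and $ar>1$ exactly when $a>\frac2p$) all use this correct relation, not the displayed $1+\frac1r=\frac2{p'}$.
\item In Step 3 you write $2\le p,q<\infty$, but $p=\infty$ is admissible. This is harmless: the square-function estimate is applied only in $x$ and needs only $q<\infty$.
\end{itemize}
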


\begin{rem}
    Different from the Euclidean case, further loss of derivatives occurs naturally in the compact case. Besides, while the regularity exponent $\gamma(p,q)$ in the wave case shares the same functional form as $\gamma_\alpha(p,q)$ (for $\alpha>1$) in the Schr\"odinger case, the admissibility conditions imposed on $(p,q)$ are distinct. Explicitly, on the sharp line and in terms of $p$,  
    \begin{align*}
        \gamma_\alpha(p,q) &= \begin{cases}
           \quad \dfrac{1}{p}\,, & \alpha>1,\\[11pt]
            \dfrac{2-\alpha}{p}\,, & \alpha\in(0,1);
        \end{cases} \qquad
        \gamma(p,q) = \frac{1}{p} \cdot \frac{d+1}{d-1}.
    \end{align*}
\end{rem}

In contrast with the classical single-function case, less is known for orthonormal systems on compact manifolds.
Spectral cluster estimates for orthonormal families were obtained by Frank–Sabin \cite{frank2017restriction}, with further refinements on manifolds of non-positive curvature by Ren–Zhang \cite{ren2024improved}.
For the flat torus, Nakamura \cite{MR4068269} derived Strichartz estimates for orthonormal systems using frequency-global dispersive estimates of Kenig–Ponce–Vega \cite{MR1101221}.
Subsequently, Wang–Zhang–Zhang \cite{wang2025strichartz} established sharp Strichartz estimates for orthonormal systems on general compact manifolds for the wave, Klein–Gordon, and fractional Schrödinger equations, combining frequency-localized dispersive estimates, the duality principle, and (vector-valued) Littlewood–Paley theory. Jian–Wang–Xi \cite{jian2025sharp} extended the restriction bounds of Burq–Gérard–Tzvetkov \cite{MR2322684} to arbitrary systems of spectral clusters.

To facilitate the presentation, we introduce some additional exponents. For $\sigma \geq \frac{1}{2}$ and a $\sigma$-admissible pair $(p,q)$, we define $\beta_\sigma(p,q)$ by
\begin{equation}\label{eq:beta}
\frac{\sigma}{\beta_\sigma(p,q)} = \frac{1}{p} + \frac{2\sigma}{q}.\end{equation}
We also introduce the trivial point $B$ and the bad point $D$ given by
\[
B = \left(\frac{1}{2}, 0\right), \qquad D = \left(0, \frac{1}{2}\right),
\]
and the critical point $A_\sigma$ and the diagonal point $E_\sigma$ given by
\begin{equation}\label{eq:AE}
A_\sigma = \left(\frac{2\sigma-1}{2(2\sigma+1)}, \frac{\sigma}{2\sigma+1}\right), \qquad
E_\sigma = \left(\frac{\sigma}{2(\sigma+1)}, \frac{\sigma}{2(\sigma+1)}\right).
\end{equation}
If $\sigma > 1$, we also introduce the Keel–Tao endpoint
\[
C_\sigma = \left(\frac{\sigma-1}{2\sigma}, \frac{1}{2}\right).
\]
In our considerations, where $\sigma = \frac{d}{2}$ or $\sigma = \frac{d-1}{2}$, the admissible pairs $(p,q)$ are shown in Figures~\ref{fig1} and~\ref{fig2}, respectively.

We present below the orthonormal Strichartz estimate for the Schr\"odinger propagator on compact manifolds. 
In the exponent plane of $(1/q,1/p)$, for two points $A$ and $B$, we denote by $(A,B]$ the segment connecting $A$ and $B$, with the obvious meaning at the endpoints.

\begin{thm}\label{thm:sharp}
Let $d\ge 1$, $\alpha \in (0,\infty)\setminus\{1\}$, $P = \Delta^{\alpha/2}$, and $I\subset\mathbb{R}$ be a bounded interval with $|I|\approx 1$. For sharp $\frac{d}{2}$-admissible pairs $(p,q)$ satisfying \eqref{eq:exp-sharp} with $\sigma=d/2$, let $s > \gamma_{\alpha}(p,q)$ with $\gamma_{\alpha}(p,q)$ given by \eqref{eq:gama} and let $\beta \ge 1$ satisfy:
\begin{itemize}
\item[(i)] If $(\frac{1}{q},\frac{1}{p}) \in (A_{d/2}, B]$, then $\beta \le \beta_{\frac d2}(p,q)= \frac{d}{d-2/p} = \frac{2q}{q+2}$.
\item[(ii)] If $(\frac{1}{q},\frac{1}{p}) \in (C_{d/2}, A_{d/2}]$, then $\beta < p/2$.
\item[(iii)] If $(\frac{1}{q},\frac{1}{p}) = C_{d/2}$, then $\beta = 1$.
\end{itemize} 
Then the Strichartz estimate \eqref{eq:str} holds for all orthonormal systems $(f_j)_j$ in $H^s(M)$ and all  $\nu=(\nu_j)_j$ in $\ell^\beta(\mathbb C)$.
\end{thm}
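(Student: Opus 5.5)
This is the sharp-line result, so I would obtain it as the $\sigma=d/2$ case of Wang--Zhang--Zhang \cite{wang2025strichartz}, redoing the argument in the following order.

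\textbf{Step 1: frequency-localized estimate.} Let $\varphi_k(\sqrt{\Delta})$, $k\ge0$, be a Littlewood--Paley partition of unity on $M$. Fix a frequency scale $N=2^k$ and work on time intervals of length $\approx N^{1-\alpha}$ for $\alpha>1$, or $\approx N^{-1}$ for $\alpha\in(0,1)$. On such intervals the Burq--G\'erard--Tzvetkov/Dinh parametrix gives the dispersive bound
\[
\|\varphi_k(\sqrt{\Delta})e^{itP}\varphi_k(\sqrt{\Delta})\|_{L^1\to L^\infty}\lesssim N^{d}(1+N^{\alpha}|t|)^{-d/2}.
\]
After rescaling, this is the Euclidean-type decay $|t|^{-d/2}$. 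I would then run the Frank--Sabin scheme:
\begin{itemize}
\item use the duality principle to reduce \eqref{eq:str} to a Schatten bound $\|W_1 T T^* W_2\|_{\mathfrak S^{\beta'}}\lesssim \|W_1\|_{L^{2\tilde p}_tL^{2\tilde q}_x}\|W_2\|_{L^{2\tilde p}_tL^{2\tilde q}_x}$;
\item embed $TT^*$ in an analytic family $T_z$ with kernel $\approx (t-s)^{z}\times$(dispersive kernel);
\item interpolate between $\mathfrak S^2$ bounds at $\mathrm{Re}\,z=-1$ (Hardy--Littlewood--Sobolev in time) and $\mathfrak S^\infty$ bounds at $\mathrm{Re}\,z=d/2$.
\end{itemize}
This yields, on each short interval and for $f_j$ frequency localized at $N$, the bound \eqref{eq:str} with $\beta=\beta_{d/2}(p,q)$ on $(A_{d/2},B]$. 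At $A_{d/2}$ itself the time-HLS step is used at the endpoint. Hence on $(C_{d/2},A_{d/2}]$ I get $\beta<p/2$ by interpolating between points of $(A_{d/2},B]$ close to $A_{d/2}$ and the Keel--Tao endpoint $C_{d/2}$. At $C_{d/2}$, $\beta=1$ is trivial: it follows from the single-function Theorem~\ref{sig0} and the triangle inequality.

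\textbf{Step 2: gluing time intervals.} Cover $I$ by $\approx N^{\alpha-1}$ (respectively $N$) short intervals. The $L^{p/2}_t$ norm then sums to a factor $N^{(\alpha-1)\cdot 2/p}$ (respectively $N^{2/p}$) on the density. The scaling factors from Step~1 combine with this to give a total cost $N^{2\gamma_\alpha(p,q)}$ per dyadic block, consistent with Theorem~\ref{sig0}. Concretely, writing $\gamma=\gamma_\alpha(p,q)$:
\[
\Big\|\sum_j\nu_j|e^{itP}\varphi_k f_j|^2\Big\|_{L^{p/2}_tL^{q/2}_x(I\times M)}\lesssim N^{2\gamma}\|\nu\|_{\ell^\beta}
\]
for $(f_j)$ orthonormal in $L^2$.

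\textbf{Step 3: summing frequencies.} For $(f_j)$ orthonormal in $H^s$, set $g_j=\langle D\rangle^s f_j$, which are orthonormal in $L^2$. I would first use a vector-valued Littlewood--Paley square function to bound $\sum_j\nu_j|e^{itP}f_j|^2$ by $\sum_k\sum_j\nu_j|e^{itP}\varphi_k f_j|^2$, which is legitimate since $q/2\ge1$. Then I apply Step~2 with $f_j$ replaced by $N^{-s}\tilde\varphi_k g_j$ and sum the triangle inequality in $k$. The series $\sum_k 2^{2k(\gamma-s)}$ converges because $s>\gamma$. The one subtlety is that $(\tilde\varphi_k g_j)_j$ is not orthonormal, only has Gram matrix bounded by $1$. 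This is harmless, because the duality formulation only requires $\sum_j \nu_j|g_j\rangle\langle g_j|$ with $\|\gamma\|_{\mathfrak S^\beta}\le\|\nu\|_{\ell^\beta}$ after conjugating by a contraction.

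\textbf{Main obstacle.} I expect the hardest part to be Step~1 near $A_{d/2}$ and on $(C_{d/2},A_{d/2})$: making the analytic-family argument work with only a frequency-localized, time-truncated parametrix, where the error terms must be controlled in Schatten norms. I would handle this by putting the truncation into a smooth cutoff $\chi(N^{\alpha-1}(t-s))$ inside the kernel of $T_z$, and checking that the $\mathrm{Re}\,z=d/2$ pointwise bound survives the cutoff.
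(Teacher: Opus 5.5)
Your architecture is the one behind this theorem. The paper does not reprove it; it quotes it from \cite{wang2025strichartz}. The part it does reproduce (the proof of Proposition~\ref{prop:local}) is exactly your sequence: localized dispersive bound, Frank--Sabin duality, Stein interpolation, gluing of short time intervals, then vector-valued Littlewood--Paley summation using $s>\gamma_\alpha(p,q)$. Your handling of the non-orthonormal family $(\tilde\varphi_k g_j)_j$ is fine. But two steps fail as written.

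\textbf{First gap: the time scale for $\alpha\in(0,1)$.} Lemma~\ref{din} holds for $|t|\le t_0N^{1-\alpha}$, which is $\gtrsim 1$ when $\alpha<1$, so $I$ should not be decomposed beyond $O(1)$ pieces. Since $N^d(1+N^\alpha|t|)^{-d/2}\le N^{d-\alpha d/2}|t|^{-d/2}$, the Frank--Sabin argument on all of $I$ directly gives the factor $N^{\frac2p(2-\alpha)}=N^{2\gamma_\alpha(p,q)}$. Your choice of $\approx N$ pieces of length $N^{-1}$ cannot give this. Gluing costs a factor $N^{2/p}$, and no per-piece bound can beat $N^{2/p}$: for $|t|\ll N^{-\alpha}$ a single frequency-$N$ wave packet of width $N^{-1}$ does not disperse. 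So you end up with at least $N^{4/p}>N^{\frac2p(2-\alpha)}$, a loss strictly larger than $\gamma_\alpha$. For $\alpha>1$ your bookkeeping is correct: $N^{\frac2p(2-\alpha)}\cdot N^{\frac2p(\alpha-1)}=N^{2/p}$.

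\textbf{Second gap: the endpoints of your analytic family are interchanged.} Take the kernel $(t-s)^{z}e^{i(t-s)P}\varphi_k(\sqrt\Delta)^2$.
\begin{itemize}
\item On $\mathrm{Re}\,z=-1$ the time factor is the Hilbert-type kernel $(t-s)^{-1+ib}$. There you get an $\mathfrak S^\infty$ bound with $W\in L^\infty_{t,x}$, from the spectral theorem and uniform boundedness of truncated Hilbert transforms. An $\mathfrak S^2$ bound there is false, because the squared kernel behaves like $N^{2d}|t-s|^{-2}$ near the diagonal.
\item The $\mathfrak S^2$ bound (pointwise dispersive estimate plus one-dimensional HLS in time) belongs at $\mathrm{Re}\,z=\frac r2-1$ with $r=\beta'\in(d+1,d+2)$. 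There the squared kernel is $\lesssim N^{2d-\alpha d}|t-s|^{-(d+2-r)}$ with $0<d+2-r<1$.
\item Interpolating at $z=0$ then gives $\mathfrak S^{r}$ with $W\in L^{\frac{2r}{r-d}}_tL^r_x$, i.e.\ the open segment $(A_{d/2},E_{d/2})$. The segment $(A_{d/2},B]$ follows by interpolation with $B$. Placing the $\mathfrak S^2$ end at exactly $\mathrm{Re}\,z=d/2$, without HLS, yields only the point $E_{d/2}$.
\end{itemize}
Accordingly, what must survive your cutoff $\chi(N^{\alpha-1}(t-s))$ is the $\mathfrak S^\infty$ bound at $\mathrm{Re}\,z=-1$, not a pointwise bound. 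The paper avoids the issue by inserting $\1_{|t|\le 2^{(1-\alpha)l}}\1_{|s|\le 2^{(1-\alpha)l}}$, which can be absorbed into $W$.

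\textbf{Two smaller corrections.} HLS is \emph{not} available at $A_{d/2}$ itself, since $d+2-r=1$ there. That is why only $\beta<p/2$ is asserted at $A_{d/2}$, and why your interpolation with $C_{d/2}$ is genuinely needed at that point. Also, for $d=2$ the Keel--Tao endpoint coincides with the excluded point $D$. On $(D,A_1]$ you must instead interpolate with sharp-line points close to $D$, where Theorem~\ref{sig0} and the triangle inequality give $\beta=1$.
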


\begin{figure}[h]
\centering
\includegraphics[width=0.8\textwidth]{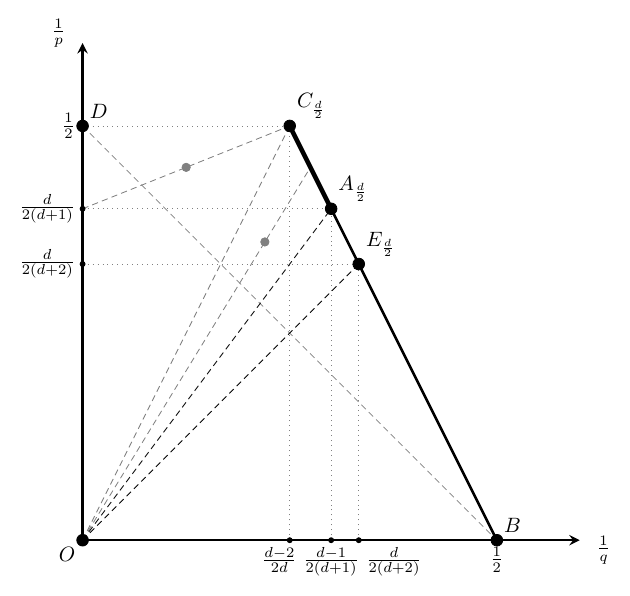}
\caption{$\frac{d}{2}$-admissible pairs. $A$ is the critical point with $OA$ as the critical line, dividing the full admissible region $OBCD$ into the \emph{subcritical} region $OAB$ and the \emph{supercritical} region $OACD$. When $d=2$, point $C$ coalesces with $D$; when $d=1$, points $A$ and $C$ coalesce with $D$.}	
\label{fig1}
\end{figure}

\begin{rem}
The discrete system exponent $\beta$ is sharp on any compact manifold when $\alpha\in(0,1)$ and in the subcritical regime when $\alpha>1$, while it is sharp 
on the sphere in the supercritical regime when $\alpha>1$.  The last case is geometry-sensitive: 
for example,  an improvement is possible on the flat torus, which is however still far from the conjectured sharp exponent $\beta < \frac{d+1}{d}$;  see Corollary 2 and Section 4 of \cite{wang2025strichartz}.
\end{rem}

\begin{rem}
For the wave and Klein–Gordon equations, \cite{wang2025strichartz} proved analogous estimates. In that setting, the result is sharp in the supercritical regime, while it remains conjectured to be sharp in the subcritical regime. See \cite[Theorem 3]{wang2025strichartz} and the remark thereafter.
\end{rem}

\begin{rem}
Unlike the Euclidean case, it is difficult to further remove the $\epsilon$-loss in the frequency-global regularity (when $s$ attains the endpoint $\gamma_{\alpha}(p,q)$ or $\gamma(p,q)$ in Theorem \ref{thm:sharp}, or its counterpart for the wave/Klein-Gordon equations) except at the two endpoints $B$ and $C$. The frequency-localized estimates attain the regularity thresholds. This is mentioned in \cite{wang2025strichartz} and can also be seen from subsequent arguments. However, the situation is different in the non-sharp region (see Theorems~\ref{thm:schodinger} and~\ref{ns2}).
\end{rem}

\subsection{Main new results}
In this paper, we extend Theorem \ref{thm:sharp} 
to non-sharp admissible pairs, 
by combining the estimates on the sharp line due to  \cite{wang2025strichartz} and the extension framework demonstrated originally for Euclidean setting by 
\cite{bez2021strichartz} and \cite{bez2019strichartz}.

\begin{thm}[Fractional Schr\"odinger]\label{thm:schodinger}
    Let $d\ge 1,\,\alpha \in(0,\infty)\setminus\{1\}$, $P=\Delta^{\alpha/2}$ and $I\subset \mathbb R$ be a bounded interval.
Suppose $(p,q)$ is non-sharp $\frac{d}{2}$-admissible in the sense of \eqref{eq:exp-nonsharp} with $\sigma=d/2$. \\
{\rm(I)}  If $(\frac{1}{q},\frac{1}{p})$ lies in the interior of the triangle $O A_{d/2} B$, then for all $s\ge \gamma_{\alpha}(p,q)$ (where $\gamma_{\alpha}(p,q)$ is given by \eqref{eq:gama}) and $\beta\le \beta_{\frac{d}{2}}(p,q)$ (where $\beta_{\frac{d}{2}}(p,q)$ is given by \eqref{eq:beta}), the Strichartz estimate \eqref{eq:str} holds for all orthonormal systems $(f_j)_j$ in $H^s(M)$ and all $\nu=(\nu_j)_j\in\ell^{\beta}(\mathbb{C})$.  \\
{\rm(II)}  If $d\ge 2$ and $(\frac{1}{q},\frac{1}{p})$ lies in the interior of the quadrilateral $O A_{d/2} C_{d/2} D$, then the Strichartz estimate \eqref{eq:str} holds for all $s\ge \gamma_{\alpha}(p,q)$ and $\beta<\frac{p}{2}$. When $d=2$ the quadrilateral reduces to a triangle. 
\end{thm}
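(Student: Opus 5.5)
Our plan is to reduce the non-sharp estimate to the sharp-line estimates of Theorem~\ref{thm:sharp} (the frequency-localized versions from \cite{wang2025strichartz}, which hit the regularity threshold without $\epsilon$-loss), combined with a Sobolev-type embedding for density functions. Any non-sharp pair $(p,q)$ lies on a horizontal segment (fixed $p$) to the right of the sharp line. Let $(p,\tilde q)$ be the point on the sharp line $BC$ with the same $p$, so $\frac1p+\frac{d}{2\tilde q}=\frac d4$ and $\tilde q<q$. Note that $\gamma_\alpha(p,q)-\gamma_\alpha(p,\tilde q)=\frac d{\tilde q}-\frac dq>0$. This gap is exactly the Sobolev gain needed to pass from $L^{\tilde q/2}$ to $L^{q/2}$ for the density $\rho=\sum_j\nu_j|g_j|^2$ with $g_j=\langle D\rangle^{-(s-\tilde\gamma)}e^{itP}f_j$.

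The key tool is a Lieb--Sobolev inequality on $M$. For an orthonormal system $(g_j)$ in $L^2(M)$ and $\tau=\frac d{\tilde q}-\frac dq$, we want
\[
\Big\|\sum_j\nu_j|\langle D\rangle^{-\tau}\phi_j|^2\Big\|_{L^{q/2}(M)}\lesssim \|\nu\|_{\ell^{\beta}}
\]
in the appropriate mixed form, with $\beta=\beta_{d/2}(p,q)$. We obtain it by duality: it is equivalent to a Schatten bound $\|W^{1/2}\langle D\rangle^{-\tau}\|_{\mathfrak S^{2\beta'}}\lesssim\|W\|_{L^{r}}$. That bound follows from the Cwikel-type estimate of Sukochev--Yang--Zanin \cite{sukochev2025singular} on compact manifolds, in its weak-Schatten (Lorentz) form. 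For the time variable we instead use localized weak Lorentz estimates. We decompose $e^{itP}f_j$ into Littlewood--Paley pieces $P_k$. The sharp-line estimate at frequency $2^k$ on unit time intervals gives bounds with a loss of $2^{k\gamma}$. The non-sharp gain $2^{-k(\frac d{\tilde q}-\frac dq)}$ then makes the sum over $k$ summable, with weak-type control at the endpoint. Real interpolation between two nearby non-sharp points, following \cite{bez2019strichartz}, upgrades the weak Lorentz bounds to strong ones. This is why $s=\gamma_\alpha(p,q)$ itself is allowed, unlike in Theorem~\ref{thm:sharp}.

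The two regions are handled as follows. For (I), the pair $(p,\tilde q)$ lies on $(A,B]$, where Theorem~\ref{thm:sharp}(i) gives $\beta_{d/2}(p,\tilde q)$. We must check that the Lieb--Sobolev step moves $\beta$ to $\beta_{d/2}(p,q)$. Since $\frac1{\beta}=\frac2{dp}+\frac2q$ decreases as $q$ grows, $\beta$ increases, so the gain must come from the Cwikel estimate rather than from trivial embedding. For (II), with the same $p$, the sharp-line point lies in $(C,A]$ with $\beta<p/2$. Here $\beta$ does not depend on $q$, so a simpler argument suffices: the frequency-localized estimate plus Bernstein in $x$, followed by summation of the dyadic pieces. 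This uses a strict room $\epsilon$ to sum over $k$ and then interpolates back to $s=\gamma_\alpha$.

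The main obstacle is the subcritical case (I) at the endpoint $\beta=\beta_{d/2}(p,q)$ with $s=\gamma_\alpha(p,q)$ exactly. Naive dyadic summation loses either an $\epsilon$ in $s$ or strictness in $\beta$. To handle it, we first prove restricted weak-type bounds at frequency $2^k$ with decay $2^{-k\delta}$ in one direction and growth in another. We then apply the Bourgain summation trick (real interpolation in $(s,\beta)$) to recover the strong bound at the endpoint, and finally pass from unit time intervals to the bounded interval $I$ by finite additivity.
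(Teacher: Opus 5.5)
The gap is in the step that is supposed to produce the exponent $\beta_{d/2}(p,q)$, which is the heart of the theorem.

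The Lieb--Sobolev inequality you state (orthonormal $(\phi_j)_j$ in $L^2(M)$, smoothing $\tau=\frac d{\tilde q}-\frac dq$, target $L^{q/2}$) is false already for one function. It would say $\langle D\rangle^{-\tau}:L^2(M)\to L^q(M)$, which needs $\tau\ge d(\frac12-\frac1q)$, whereas $\tau<d(\frac12-\frac1q)$ because $\tilde q>2$. What is true is the vector-valued Sobolev bound $\|\sum_j\nu_j|\langle D\rangle^{-\tau}g_j|^2\|_{L^{q/2}}\lesssim\|\sum_j\nu_j|g_j|^2\|_{L^{\tilde q/2}}$ for $\nu_j\ge0$. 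Composed with the sharp estimate at $(p,\tilde q)$, however, it only returns $\beta_{d/2}(p,\tilde q)<\beta_{d/2}(p,q)$. After the sharp estimate you hold only a density bound, so orthonormality is gone. Nor can a fixed-time Cwikel bound be inserted into the space-time operator $WTT^*\overline W$; indeed $W\langle D_x\rangle^{-\tau}$ is not even compact on $L^2_{t,x}$.

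A sanity check shows no horizontal mechanism can give $\beta_{d/2}(p,q)$. Points of $OA_{d/2}C_{d/2}D$ below the height $\frac d{2(d+1)}$ of $A_{d/2}$ also project horizontally onto $(A_{d/2},B)$. There $\beta_{d/2}(p,q)>\frac p2$, which contradicts the necessary condition $\frac1\beta\ge\frac2p$ coming from zonal functions on $\mathbb S^d$. For the same reason there is no ``non-sharp gain'' $2^{-k\tau}$ at $s=\gamma_\alpha(p,q)$. That gain lives in $L^{\tilde q/2}_x$, and Bernstein back to $L^{q/2}_x$ costs exactly $2^{2k\tau}$. So each dyadic piece sits exactly at the threshold $2^{2k\gamma_\alpha(p,q)}$, and summing over $k$ fails. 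Likewise, in (II) the horizontal projection lands on $(C_{d/2},A_{d/2}]$ only when $\frac1p\ge\frac d{2(d+1)}$.

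The missing idea is that $\beta$ is raised by \emph{interpolation} toward an endpoint where $\beta$ is large. This works because $\frac1{\beta_{d/2}}$ and $\gamma_\alpha$ are affine in $(\frac1q,\frac1p)$. The paper proceeds as follows.
\begin{enumerate}
\item It complex-interpolates the sharp-line estimates on $(A_{d/2},E_{d/2})$ with the Lieb--Sobolev bound on the line $\frac1p=0$. That bound is Lemma~\ref{lemma} in $L^\infty_tL^{q/2}_x$, with $\ell^{q/2,1}$ and smoothing $d(\frac12-\frac1q)=\gamma_\alpha(\infty,q)$. This is where Cwikel actually enters, with $V^{1/2}\in L^{2(q/2)'}$. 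Real interpolation then upgrades Lorentz to strong norms, giving (I) for $s>\gamma_\alpha(p,q)$.
\item To reach $s=\gamma_\alpha(p,q)$, it first obtains frequency-localized bounds in the interior of $OA_{d/2}E_{d/2}$. These come from interpolating the localized sharp line (Frank--Sabin duality plus Stein interpolation with the localized dispersive estimate) with the local Weyl bound at $O$ ($L^\infty_{t,x}$, $\ell^\infty$, loss $2^{ld}$).
\item It then applies Lemma~\ref{lem2} at the two points $(\frac1q,\frac1p\pm\delta)$ with the same $q$. The monotonicity of $\gamma_\alpha(\cdot,q)$ in $\frac1p$ supplies the decay and growth $2^{\mp\varepsilon l}$ that your Bourgain-type summation step needs but never receives. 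Real interpolation between two such points gives the strong bound.
\item Part (II) does not use the supercritical sharp segment at all. The paper interpolates between $[C_{d/2},D)$, where $\beta=1$ follows without loss from Theorem~\ref{sig0} and Minkowski, and points of $OA_{d/2}B$ approaching $(O,A_{d/2})$, where $\beta_{d/2}(p,q)\to\frac p2$.
\end{enumerate}
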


\begin{rem}
The Strichartz estimate \eqref{eq:str} also holds on the segment $(O,A_{d/2})$ for all $s\ge \gamma_{\alpha}(p,q)$ and $\beta<\frac{p}{2}$, on $[A_{d/2}, C_{d/2})$ for all $s> \gamma_{\alpha}(p,q)$ and $\beta<\frac{p}{2}$, on $(A_{d/2}, B)$  for $s> \gamma_{\alpha}(p,q)$ and $\beta\le \beta_{d/2}(p,q)$,
on $[C_{d/2},D)$ for $s\ge \gamma_{\alpha}(p,q)$ and $\beta\le \frac{p}{2}=1$, and on $B$ for $s\ge \gamma_{\alpha}(p,q)=0$ and $\beta\le \beta_{d/2}(p,q)=1$. All of these, together with Theorems \ref{thm:schodinger} and \ref{thm:sharp}, solve the Strichartz estimates for the \emph{full} admissible region in \eqref{eq:exp-fulladm} with $\sigma=d/2$ (the origin $O$ and the segment $(O,B)$ are considered only in some localized or weak estimates and are usually not part of our consideration in the final global estimate).
\end{rem}

\begin{rem}
When $\alpha>1$, the range of $\beta$ is sharp in the subcritical triangle $O A_{d/2} B$  on any compact manifold,
and sharp in the supercritical quadrilateral $O A_{d/2} C_{d/2} D$ on the sphere $\mathbb{S}^d$. 
As discussed in Section \ref{sec:impr},  the range of $\beta$ in $O A_{d/2} C_{d/2} D$ can be improved on the flat torus.  See Corollary \ref{cor:OACD}. Subcritical regime is already sharp on any manifold from the Weyl law, but Corollary \ref{cor0}, on the flat torus, lies in the more relaxed regularity threshold $\gamma$ and the corresponding $\beta$ increases with $\gamma$.
When $\alpha\in(0,1)$, the sharpness follows by the long-time frequency-localized estimate and the construction of $j$--zonal eigenfunctions.
See Appendix \ref{app:nec} for further details on necessity. 
\end{rem}
\begin{rem}
The case $\beta=p/2$ for the supercritical region $O A_{d/2} C_{d/2} D$ is reduced to a localized estimate on the critical line  $[O,A_{d/2})$. See Proposition \ref{prop:beta=p/2}. However, Example \ref{ex} tells that global estimate cannot hold on the critical line. 
\end{rem}

\begin{thm}[Wave and Klein-Gordon]\label{ns2}
    Let $d\ge 2, m\ge 0$, $P=\sqrt{\Delta+m^2}$ and $I\subset \mathbb R$ be a bounded interval. Suppose $(p,q)$ is non-sharp $\frac{d-1}{2}$-admissible in the sense of \eqref{eq:exp-nonsharp} with $\sigma=(d-1)/2$.\\   
{\rm(I)} If $(\frac{1}{q},\frac{1}{p})$ lies in the interior of the triangle $O A_{(d-1)/2} B$, then
for all $s\ge\gamma(p,q)$ given by \eqref{eq:gam} and $\beta\le\beta_{\frac{d-1}{2}}(p,q)$ given by \eqref{eq:beta},
 the Strichartz estimate \eqref{eq:str}   holds for all orthonormal systems $(f_j)_j$ in $H^s(M)$ and all  $\nu=(\nu_j)_j\in \ell^\beta(\mathbb C)$.\\
{ \rm(II)} If $d\geq 3$ and  $(\frac{1}{q},\frac{1}{p})$ lies in  the interior of $O A_{(d-1)/2} C_{(d-1)/2} D$, 
the Strichartz estimate \eqref{eq:str} holds for all $s \ge \gamma(p,q)$ and $\beta<\frac{p}{2}$.
\end{thm}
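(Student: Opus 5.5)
We follow the same scheme as for Theorem~\ref{thm:schodinger}, now with $\sigma=\frac{d-1}{2}$ and the wave/Klein--Gordon sharp-line estimates of \cite[Theorem 3]{wang2025strichartz} as input. First we reduce to frequency-localized pieces. Let $\varphi_k(\sqrt{\Delta})$, $k\ge0$, be a smooth Littlewood--Paley decomposition on $M$. Set $\gamma_0=\gamma(p_0,q_0)$. On the sharp $\frac{d-1}{2}$-admissible line, \cite{wang2025strichartz} gives a frequency-localized estimate of the form
\[
\Big\|\sum_j\nu_j|e^{itP}\varphi_k(\sqrt\Delta)f_j|^2\Big\|_{L^{p_0/2}_tL^{q_0/2}_x(I\times M)}\lesssim 2^{2k\gamma_0}\|\nu\|_{\ell^{\beta_0}}.
\]
This holds uniformly for $|I|\lesssim 1$ and for orthonormal $(f_j)$ in $L^2$, with $\beta_0$ as in the analogue of Theorem~\ref{thm:sharp}, and without the $\varepsilon$-loss. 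The loss only appears after summing in $k$. The idea is to interpolate this with a trivial estimate and exploit the strict inequality in \eqref{eq:exp-nonsharp}: the resulting strict gain should allow summation over $k$ at exactly $s=\gamma(p,q)$.

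\textbf{Step 1: a second endpoint.} We need an estimate at $q=2$ or near the origin/segment $OD$. At $D=(0,\frac12)$, i.e. $q=\infty$ with time exponent $p=2$, the estimate fails. We therefore use a Lieb--Sobolev-type inequality obtained from the Sukochev--Yang--Zanin Cwikel estimate. Combined with Bernstein, it gives for each $t$
\[
\Big\|\sum_j\nu_j|\varphi_k e^{itP}f_j|^2\Big\|_{L^{r/2}_x}\lesssim 2^{2kd(\frac12-\frac1r)}\|\nu\|_{\ell^{\beta}},\qquad \tfrac{d}{\beta}\text{-type relation}.
\]
It also gives the trivial $L^\infty_tL^1_x$ bound $\|\nu\|_{\ell^1}$ at $B$. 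Interpolating these (complex interpolation on the operator-density side, via the duality principle and Schatten classes) with the sharp-line bounds covers each interior point of $OA_{(d-1)/2}B$ and of $OA_{(d-1)/2}C_{(d-1)/2}D$. The resulting frequency-localized bounds carry the exponent $\beta_{\frac{d-1}{2}}(p,q)$, respectively any $\beta<p/2$, with derivative loss $2^{2k\gamma(p,q)}$. Because $\gamma$ is affine in $(1/q,1/p)$, this loss interpolates correctly, and the scaling of $\beta_\sigma$ in \eqref{eq:beta} is also affine.

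\textbf{Step 2: globalization.} Summing $k$ naively with $s=\gamma$ fails: the triangle inequality in $L^{p/2}L^{q/2}$ loses a $\log$, and orthogonality between different $k$ is not available in the density. Following \cite{bez2019strichartz}, we instead prove the localized estimates in weak/Lorentz form. For two nearby non-sharp points $(p_1,q_1),(p_2,q_2)$ on a line through $(p,q)$, we obtain bounds of the form $\|\sum_j\nu_j|e^{itP}\varphi_k f_j|^2\|\lesssim 2^{2k(\gamma(p_i,q_i)+\delta_i)}$ with opposite-sign shifts $\delta_1<0<\delta_2$ relative to the target regularity. Writing $g_j=\langle D\rangle^{s}f_j$, we then apply a Bourgain-type summation trick (real interpolation of $\ell^\infty_{2^{ks}}$-valued operators) to sum in $k$. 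This yields the restricted weak-type estimate at $(p,q)$, which real interpolation then upgrades to the strong bound with $s=\gamma(p,q)$ exactly. The nonsharpness is what provides room to move $(p_i,q_i)$ off the point in both directions while staying inside the open region. The vector-valued Littlewood--Paley theory of \cite{wang2025strichartz} converts $\sum_k$ into the square function needed for $q/2\ge1$.

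\textbf{Main obstacle.} The hardest part is Step 2 for $\beta$ at the endpoint $\beta_{\frac{d-1}{2}}(p,q)$ in region (I). Real interpolation changes $\ell^\beta$ to Lorentz $\ell^{\beta,r}$, so we must interpolate along a line on which $\beta_\sigma$ stays constant. Since $\sigma/\beta_\sigma=1/p+2\sigma/q$ is affine, such a line exists: it passes through $(p,q)$ transversally to the level sets of $\gamma$. We will check that this line stays inside $OA_{(d-1)/2}B$ near the point, which holds for interior points. For region (II), $\beta<p/2$ is open, so we can afford an $\varepsilon$ of loss in $\beta$ and instead interpolate along lines with $p$ fixed.
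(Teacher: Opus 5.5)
Your overall architecture is the paper's second proof (Section~\ref{sec:proof2}): frequency-localized bounds with loss $2^{2k\gamma(p,q)}$, a Bourgain-type globalization in weak Lorentz spaces, then a real-interpolation upgrade. Write $\sigma=\frac{d-1}{2}$.

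Step~1 is fine. At frequency $2^k$ you do not even need the Cwikel/Lieb--Sobolev input: the bound at $(\frac1r,0)$ follows by interpolating the trivial $\ell^1$ bound with the local Weyl bound $2^{kd}\|\nu\|_{\ell^\infty}$. Note also that the sharp-line localized input is not a corollary of the statement of \cite[Theorem 3]{wang2025strichartz}, because $\varphi_k(\sqrt\Delta)$ destroys orthonormality; one reruns the duality argument with Lemma~\ref{dislem}, as in Proposition~\ref{localw1}.

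\textbf{The gap is in Step~2,} exactly where you place the main obstacle, and your proposed resolution does not work.

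The globalization lemma (Lemma~\ref{lem2}, from \cite{bez2019strichartz}) needs the \emph{same} spatial exponent $q$ at both points. For each fixed $t$ one applies the vector-valued Littlewood--Paley inequality in $L^{q/2}_x$. The dyadic splitting is then done on the scalar functions $t\mapsto\|\sum_j\nu_j|P_lg_j(t)|^2\|_{L^{q/2}_x}$, using the weak $L^{p_i/2,\infty}_t$ bounds. Since $\sigma/\beta_\sigma=\frac1p+\frac{2\sigma}{q}$ depends on $p$, a line of constant $\beta_\sigma$ never has $q$ fixed. So your constant-$\beta$ line cannot be fed into this lemma, and the same applies to your fixed-$p$ lines in region (II). You give no substitute: with varying $q$ an abstract Bourgain trick at best lands in $(L^{p_1/2}_tL^{q_1/2}_x,L^{p_2/2}_tL^{q_2/2}_x)_{\theta,\infty}$, which is not a mixed Lorentz space you can then upgrade. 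Moreover the premise is false: a Lorentz space in $\nu$ is harmless.

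What is actually needed is the following.
\begin{itemize}
\item Apply Lemma~\ref{lem2} on the vertical segment through $(\frac1q,\frac1p)$: $q$ fixed and $\frac1{p_0}=\frac1p+\delta$, $\frac1{p_1}=\frac1p-\delta$, so that $\varepsilon_0=\varepsilon_1=2\delta$ at $s=\gamma(p,q)$. Accept that the $\beta_i=\beta_\sigma(p_i,q)$ differ; this gives $\ell^{\beta,1}\to L^{p/2,\infty}_tL^{q/2}_x$.
\item Real-interpolate two such bounds, taken at points on the level line of $\gamma$ through $(\frac1q,\frac1p)$, with second index $p/2$. These points share the same $s$ but have different $q_i$ and $\beta_i$. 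The result is $\ell^{\beta,p/2}\to L^{p/2}_tL^{q/2,p/2}_x$.
\item Remove the Lorentz indices by two embeddings. First, $\ell^{\beta}\subset\ell^{\beta,p/2}$, valid because $\beta_\sigma(p,q)<p/2$ strictly below the critical line $OA_\sigma$. Second, $L^{q/2,p/2}_x\subset L^{q/2}_x$, which requires $p\le q$.
\end{itemize}

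Your plan uses neither embedding, and it misses the constraint $p\le q$ entirely. The upgrade only works in the triangle $OA_\sigma E_\sigma$ above the diagonal. The rest of $OA_\sigma B$ must be reached by complex interpolation of these strong bounds with the trivial estimate at $B$.

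Region (II) then needs no separate globalization. Interpolate complexly between the global bounds at points of $OA_\sigma B$ approaching $(O,A_\sigma)$, where $\beta_\sigma(p,q)\to p/2$, and the $\beta=1$ bounds on $(C_\sigma,D)$ from Theorem~\ref{sig1} and Minkowski. This gives every $\beta<p/2$ with $s\ge\gamma(p,q)$.
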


\begin{figure}[h] \centering
\includegraphics[width=0.8\textwidth]{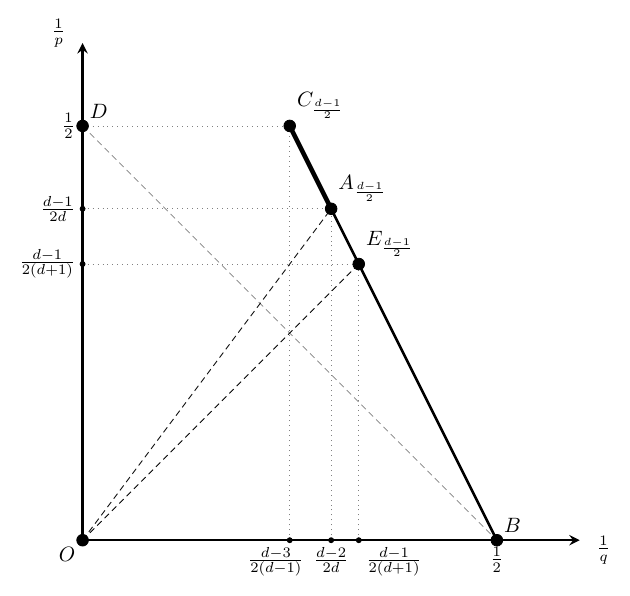}
\caption{$\frac{d-1}{2}$-admissible pairs}
\label{fig2}
\end{figure}

 \begin{rem}
 The range of  $\beta$ in Theorem \ref{ns2} is sharp in the supercritical quadrilateral $O A_{(d-1)/2} C_{(d-1)/2} D$ on any compact manifold;
however, whether $\beta$ is sharp in the subcritical triangle $O A_{(d-1)/2} B$ remains an open question. This differs from the fractional Schr\"odinger case. In fact, the same issue also arises in the Euclidean setting.
\end{rem}

\subsection{Structure of the paper}
Section~\ref{sec:proof1} presents the first proof of the main theorems with $\epsilon$-loss of regularity using the Lieb-Sobolev inequality, while Section~\ref{sec:proof2} provides an alternative proof based on localized weak Lorentz estimates. Section~\ref{sec:impr} is dedicated to the improvement on the flat torus, followed by a final appendix on the sharpness of the system exponents.

\section{Proof of main theorems with $\epsilon$-loss of regularity}\label{sec:proof1} 

\subsection{Cwikel estimate and Schatten class}
We first state a Cwikel estimate on compact manifolds, due to Sukochev-Yang-Zanin \cite[Lemma 2.3]{sukochev2025singular}. Although their result holds for all $q>0$, we only need the case $q>2$. 
Before stating it, we recall the Schatten classes.

\begin{defi}
For $0<p,q<\infty$, the Lorentz norm on a $\sigma$-finite measure space $(X,\mu)$ is defined by
\[
\|f\|_{L^{p,q}} = \left( p \int_0^\infty \lambda^{\,q-1} \bigl(d_f(\lambda)\bigr)^{q/p} \, d\lambda \right)^{1/q},
\] 
where $d_f(\lambda) = \mu\bigl(\{x\in X : |f(x)| > \lambda\}\bigr)$ is the distribution function of $f$.
For $q=\infty$ and $0<p\le\infty$, the weak $L^p$ norm is defined by
\[
\|f\|_{L^{p,\infty}} = \sup_{\lambda>0} \; \lambda \; \bigl(d_f(\lambda)\bigr)^{1/p}.
\]
The Schatten class $\mathfrak S^{p,q}(H)$ consists of all compact operators $T$ on a Hilbert space $H$ such that the singular values satisfy $\lambda(T)\in\ell^{p,q}$, with Schatten norm
\[
\|T\|_{\mathfrak S^{p,q}} = \|\lambda(T)\|_{\ell^{p,q}}.
\] 
In particular, when $p=q$, we denote the corresponding class by $\mathfrak S^{p}(H)$.
\end{defi}

\begin{lem}[Cwikel estimate]\label{lem0}
    Let $d\ge1$ and $q>2$. 
    Then for all $W\in L^q(M)$,
    \[
        \bigl\| W\,(1+\Delta)^{-\frac{d}{2q}} \bigr\|_{\mathfrak{S}^{q,\infty}(L^2(M))} \lesssim \|W\|_{L^q(M)} .
    \]
\end{lem}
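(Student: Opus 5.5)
The plan is to reduce the Cwikel estimate on $M$ to the Euclidean weak-type Cwikel estimate, $\|f(x)g(-i\nabla)\|_{\mathfrak S^{q,\infty}(L^2(\mathbb R^d))}\lesssim \|f\|_{L^q}\|g\|_{L^{q,\infty}}$ for $q>2$, using a finite partition of unity and local coordinates. The operator $(1+\Delta)^{-d/(2q)}$ is elliptic of order $-d/q$ on $M$, and multiplication by $W$ is local. Since $q>2$, the quasi-norm of $\mathfrak S^{q,\infty}$ is equivalent to a norm, so finitely many pieces can be summed freely.

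\textbf{Step 1 (partition of unity).} Fix a finite atlas $\{U_k\}$ with charts $\kappa_k:U_k\to V_k\subset\mathbb R^d$, a subordinate partition of unity $\{\chi_k\}$, and cutoffs $\tilde\chi_k\in C_c^\infty(U_k)$ with $\tilde\chi_k=1$ on $\operatorname{supp}\chi_k$. Write
\[
W(1+\Delta)^{-\frac d{2q}}=\sum_k W\chi_k\,\tilde\chi_k(1+\Delta)^{-\frac d{2q}}\tilde\chi_k+\sum_k W\chi_k\,\tilde\chi_k(1+\Delta)^{-\frac d{2q}}(1-\tilde\chi_k).
\]
\emph{Off-diagonal terms.} Here $\chi_k(1+\Delta)^{-d/(2q)}(1-\tilde\chi_k)$ is smoothing, since pseudodifferential kernels are smooth off the diagonal. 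We factor it as $\chi_k(1+\Delta)^{-N}\cdot R$ with $R$ bounded on $L^2$. Then we bound $W\chi_k(1+\Delta)^{-N}$ in $\mathfrak S^{q,\infty}$, in fact even in $\mathfrak S^{q}$ once $N$ is large. The Kato–Seiler–Simon-type bound holds on $M$: by interpolation between $L^2\to$ Hilbert–Schmidt, with $\|W(1+\Delta)^{-N}\|_{\mathfrak S^2}\lesssim\|W\|_{L^2}\|$kernel$\|$, and $L^\infty\to$ bounded operators. Both are controlled by $\|W\|_{L^q}$ because $M$ is compact.

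\textbf{Step 2 (localization to $\mathbb R^d$).} Transplanting by $\kappa_k$, the operator $\tilde\chi_k(1+\Delta)^{-d/(2q)}\tilde\chi_k$ becomes $a(x,D)$, a compactly supported classical pseudodifferential operator of order $-d/q$. The pullback is unitary up to multiplication by the smooth, bounded-above-and-below density $\sqrt{\det g}$. Write $a(x,D)=b(x,D)\langle D\rangle^{-d/q}$ with $b$ of order $0$, plus a remainder of order $-\infty$ handled as in Step 1. We want $\langle D\rangle^{-d/q}$ on the right next to the multiplication operator, but $b(x,D)$ sits between $W$ and $\langle D\rangle^{-d/q}$, so the order must be swapped. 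One way is to use $(1+\Delta)^{-d/(2q)}=(1+\Delta)^{-d/(2q)}\cdot\mathrm{Id}$ and work in the dual form. Since $\mathfrak S^{q,\infty}$ is closed under adjoints, it suffices to bound $(1+\Delta)^{-d/(2q)}\bar W$. Locally this is $\tilde\chi\,\langle D\rangle^{-d/q}\,c(x,D)\,\bar W\chi$ with $c$ of order $0$.

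\textbf{Step 3 (main obstacle).} The difficulty is the commutation just described: moving the order-zero factor $c(x,D)$ past $\bar W\chi$ with no regularity on $W$. I would avoid it as follows. Write $\langle D\rangle^{-d/q}=\langle D\rangle^{-d/q}\langle D\rangle^{d/q}\,a(x,D)\langle D\rangle^{-d/q}\cdots$, or, more simply, use ellipticity: choose a parametrix so that
\[
\tilde\chi(1+\Delta)^{-d/(2q)}\tilde\chi=\tilde\chi\, e(x,D)\,\langle D\rangle^{-d/q}\tilde\chi+\text{smoothing},
\]
with $e$ of order $0$ on the \emph{left}. Then
\[
W\chi\,\tilde\chi\, e(x,D)\langle D\rangle^{-d/q}=\bigl[W\chi\, e(x,D)\,(\chi'W)^{-1}\bigr]\cdots
\]
which is not legitimate. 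So instead I commute $e(x,D)$ past the multiplier by taking the adjoint once more: $\langle D\rangle^{-d/q}e^*(x,D)\bar W$. Now the bounded operator $e^*$ can be absorbed on the \emph{right} by using $\langle D\rangle^{-d/q}e^*=\tilde e\,\langle D\rangle^{-d/q}+(\text{order }-d/q-1)$, where $\tilde e$ is order zero and bounded on $L^2$. The remainder gets one extra derivative, which permits iteration down to an order below $-d/2$; those terms are Hilbert–Schmidt after multiplying by $W\in L^q\subset L^2$. The principal term $\tilde e\,\langle D\rangle^{-d/q}\bar W\chi$ is bounded by the Euclidean Cwikel estimate with $g(\xi)=\langle\xi\rangle^{-d/q}\in L^{q,\infty}(\mathbb R^d)$, giving $\lesssim\|W\|_{L^q}$. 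Summing the finitely many charts, using the triangle inequality in $\mathfrak S^{q,\infty}$ (which is normable since $q>2$), completes the proof.
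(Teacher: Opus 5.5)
Your argument is correct in substance, but it takes a genuinely different route. The paper gives no proof of its own and simply imports the estimate from Sukochev--Yang--Zanin, where it is stated for all $q>0$. You instead derive it from Cwikel's Euclidean theorem, by localizing in charts and using the pseudodifferential calculus for $(1+\Delta)^{-d/(2q)}$. Your reduction uses $q>2$ essentially: in the weak-type Cwikel bound with $g=\langle\xi\rangle^{-d/q}\in L^{q,\infty}(\mathbb R^d)$, and in absorbing all error terms via $L^q(M)\subset L^2(M)$ and $\mathfrak S^2\subset\mathfrak S^{q,\infty}$. So it does not recover the full range of the cited lemma. It does cover exactly what the paper uses, since the lemma is only applied with $q=2p'>2$ in the Lieb--Sobolev inequality, and it is self-contained modulo Cwikel's theorem and standard symbol calculus.

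You can shorten it considerably. The ``main obstacle'' of Step 3 comes only from your choice $a(x,D)=b(x,D)\langle D\rangle^{-d/q}$ in Step 2. Factor on the other side instead: $a(x,D)=\langle D\rangle^{-d/q}\,b(x,D)$ with $b(x,D)=\langle D\rangle^{d/q}a(x,D)$. This is an exact identity, and $b$ has order $0$ by the composition theorem. Then $W\chi\,a(x,D)=(W\chi\langle D\rangle^{-d/q})\,b(x,D)$, and Cwikel's estimate plus the $L^2$-boundedness of $b(x,D)$ finishes the local term. No adjoint, commutator expansion or remainder iteration is needed. Likewise, in your adjoint form, $\langle D\rangle^{-d/q}e^*=\tilde e\,\langle D\rangle^{-d/q}$ holds exactly with $\tilde e=\langle D\rangle^{-d/q}e^*\langle D\rangle^{d/q}$. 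The line with $(\chi'W)^{-1}$ should simply be dropped.

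In Step 1, take $\tilde\chi_k=1$ on a neighbourhood of $\operatorname{supp}\chi_k$, so that the off-diagonal kernel is genuinely smooth. Then $W$ times that operator is Hilbert--Schmidt with norm $\lesssim\|W\|_{L^2}\lesssim\|W\|_{L^q}$ directly, and no Kato--Seiler--Simon interpolation is needed.
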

Here $W$  is interpreted as a multiplication operator on the left-hand side, as are all functions treated as operators throughout this paper.

\subsection{Lieb-Sobolev inequality}
We next establish a Sobolev inequality for orthogonal systems on compact manifolds, which was used by Bez-Lee-Nakamura \cite{bez2021strichartz} in the Euclidean setting. The underlying ideas originate from the work of Lieb \cite{lieb1983lp}.

\begin{lem}[Lieb-Sobolev inequality]\label{lemma}
    Let $d\ge 1$ and $1<p<\infty$. Then the inequality
    \begin{equation}\label{eq:LS}
        \Big\| \sum_{j} \nu_j |\langle D\rangle^{-\frac{d}{2p'}} f_j|^2\Big\|_{L^{p}(M)} \lesssim \| \nu \|_{\ell^{p,1}}
    \end{equation}
    holds for all orthonormal systems $(f_j)_j$ in $L^2(M)$ and  $\nu=(\nu_j)_j$ in $\ell^{p,1}(\mathbb C)$.
\end{lem}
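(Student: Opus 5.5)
The plan is to prove \eqref{eq:LS} by duality, reducing it to the Cwikel estimate of Lemma \ref{lem0}. Write $\gamma=\sum_j \nu_j |f_j\rangle\langle f_j|$ and $T=\langle D\rangle^{-\frac{d}{2p'}}$. Then the density is
\[
\rho(x)=\sum_j\nu_j|Tf_j(x)|^2 ,
\]
which is the density of the operator $T\gamma T$, and by orthonormality $\|\gamma\|_{\mathfrak S^{p,1}}=\|\nu\|_{\ell^{p,1}}$. Since $1<p<\infty$, we have
\[
\|\rho\|_{L^p}=\sup\Big\{\Big|\int_M \rho\, V\Big| : \|V\|_{L^{p'}(M)}\le1\Big\}.
\]
For a finite system with smooth $f_j$ (general systems follow by approximation and Fatou), the identity $\int \rho V=\operatorname{Tr}(T V T\gamma)$ holds.

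Fix $V\in L^{p'}$ and factor $V=\bar W_1W_2$ with $W_1=|V|^{1/2}$ and $W_2=\operatorname{sgn}(V)|V|^{1/2}$, so that $W_1,W_2\in L^{2p'}$ with norms $\|V\|_{L^{p'}}^{1/2}$. Then $TVT=(W_1T)^*(W_2T)$. Apply Lemma \ref{lem0} with $q=2p'$; this requires $2p'>2$, which holds since $p<\infty$. The exponent of $\langle D\rangle$ matches exactly: $(1+\Delta)^{-\frac{d}{2\cdot 2p'}}=\langle D\rangle^{-\frac{d}{2p'}}=T$. Hence
\[
\|W_iT\|_{\mathfrak S^{2p',\infty}}\lesssim \|V\|_{L^{p'}}^{1/2}.
\]
By the Hölder inequality for weak Schatten classes, $\|TVT\|_{\mathfrak S^{p',\infty}}\lesssim\|V\|_{L^{p'}}$. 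Note that $W_iT$ as given in Lemma \ref{lem0} already has $W$ on the left, so taking the adjoint for the $W_1$ factor is harmless.

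Finally, use the duality $(\mathfrak S^{p,1})^*=\mathfrak S^{p',\infty}$ for $1<p<\infty$, in the form of the trace Hölder inequality
\[
|\operatorname{Tr}(AB)|\lesssim\|A\|_{\mathfrak S^{p',\infty}}\|B\|_{\mathfrak S^{p,1}}.
\]
This gives $\big|\int\rho V\big|\lesssim\|V\|_{L^{p'}}\|\gamma\|_{\mathfrak S^{p,1}}=\|V\|_{L^{p'}}\|\nu\|_{\ell^{p,1}}$. Taking the supremum over $V$ yields \eqref{eq:LS}.

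The main technical point is the Lorentz–Schatten Hölder and duality step. Weak spaces are only quasi-normed, but for $p'>1$ the space $\mathfrak S^{p',\infty}$ is normable. The Hölder inequality $\mathfrak S^{2r,\infty}\cdot\mathfrak S^{2r,\infty}\subset\mathfrak S^{r,\infty}$ follows from the singular value inequality $s_{2n}(AB)\le s_n(A)s_n(B)$. The trace pairing bound follows from
\[
\sum_n s_n(A)s_n(B)\le \sup_n\big(n^{1/p'}s_n(A)\big)\sum_n n^{-1/p'}s_n(B)\approx \|A\|_{\mathfrak S^{p',\infty}}\|B\|_{\mathfrak S^{p,1}},
\]
which is where the $\ell^{p,1}$ norm on $\nu$ enters.
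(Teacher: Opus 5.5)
Your proposal is correct and follows essentially the same route as the paper: you dualize against $V\in L^{p'}$, write the pairing as $\operatorname{Tr}(\langle D\rangle^{-\frac{d}{2p'}}V\langle D\rangle^{-\frac{d}{2p'}}\Gamma)$ with $\Gamma=\sum_j\nu_j|f_j\rangle\langle f_j|$, apply Lemma~\ref{lem0} with $q=2p'$ to place $\langle D\rangle^{-\frac{d}{2p'}}V\langle D\rangle^{-\frac{d}{2p'}}$ in $\mathfrak S^{p',\infty}$, and conclude with the $\mathfrak S^{p,1}$--$\mathfrak S^{p',\infty}$ H\"older inequality. Your factorization $V=\overline{W_1}W_2$ with $W_1=|V|^{1/2}$, together with the weak-Schatten H\"older inequality used in place of the identity $s_n(K^*K)=s_n(K)^2$, is a small improvement, because the paper's $K=V^{1/2}J^s$, $L=K^*K$ literally reproduces $J^sVJ^s$ only when $V\ge 0$.
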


\begin{proof}
    Let $s = \frac{d}{2p'}$ and define
\[
g = \sum_j \nu_j \bigl| (1+\Delta)^{-s/2} f_j \bigr|^2.
\]
We aim to estimate $\|g\|_{L^p(M)}$.  By duality, it suffices to estimate the integral
\begin{equation}\label{eq:lem2-int}
\left|\int_M g(x) V(x) \,dx\right|\lesssim \| \nu \|_{\ell^{p,1}} 
\end{equation}
for every $V \in L^{p'}(M)$ with $\|V\|_{L^{p'}(M)} = 1$.

   Let $J=(1+\Delta)^{-\frac{1}{2}}$ be the Bessel potential operator, and define
$$K=V^{\frac{1}{2}}J^s,\qquad K^*=J^sV^{\frac{1}{2}}.$$
    Here $K^*$ is the adjoint of $K$. Let $L=K^* K$. Define the diagonal operator
    \[\Gamma = \sum_{j} \nu_j |f_j \rangle \langle f_j|,\] where we use Dirac’s notation $|f\rangle \langle g|(h):=\langle g, h \rangle f$. 
 By the definitions, we have
    \begin{equation}\label{eq:lem2-gam}
        \|\Gamma\|_{\mathfrak S^{p,1}} = \|\nu\|_{\ell^{p,1}}.
    \end{equation}
Then we have the l.h.s. of \eqref{eq:lem2-int}
\begin{align*}
\int_M g(x) V(x) dx 
&=\sum_{j} \nu_j \int_M  |V^{\frac{1}{2}}(x)(1+\Delta)^{-\frac{s}{2}} f_j(x)|^2  dx \\
&= \sum_{j} \nu_j \langle K f_j, K f_j \rangle_{L^2(M)} \\
&= \sum_{j} \nu_j \langle f_j, L f_j \rangle_{L^2(M)} \\
&= \operatorname{Tr}(\Gamma L).
\end{align*}

Since $V \in L^{p'}(M)$, it follows that $V^\frac{1}{2} \in L^{2p'}(M)$. Note that $p >1$ implies $2p' > 2$.
    By Lemma \ref{lem0}, with $q=2p'$ and $W=V^\frac{1}{2} $, we have 
\[
        \|K\|_{\mathfrak S^{q,\infty}} =\Big\| V^\frac{1}{2} (1+\Delta)^{-\frac{s}{2}}\Big\|_{\mathfrak S^{q,\infty}} \lesssim \|V^\frac{1}{2}\|_{L^q(M)} = \|V\|_{L^{p'}(M)}^\frac{1}{2}=1.
\]
    By definition, the singular values satisfy
    $$\lambda_n(L)=\lambda_n(K^* K)=\lambda_n(K)^2,$$
    where $\lambda_n$ denotes the $n$-th singular value in decreasing order.
    Thus, we have
    \begin{equation}\label{eq:lem2-L}
        \|L\|_{\mathfrak S^{p',\infty}} = \|K\|_{\mathfrak S^{q,\infty}}^2 \lesssim 1.
    \end{equation}
By \eqref{eq:lem2-gam}, \eqref{eq:lem2-L} and H\"older inequality for Schatten norms, we finally obtain
    \[
       | \text{Tr}(\Gamma L) | \leq \|\Gamma\|_{\mathfrak S^{p,1}} \|L\|_{\mathfrak S^{p',\infty}} \lesssim \|\nu\|_{\ell^{p,1}} .
   \]
This will give \eqref{eq:lem2-int} and \eqref{eq:LS}.
\end{proof}

\subsection{Reduction principle}
We now explain how to obtain non-sharp region Strichartz estimates from sharp line estimates and the Lieb--Sobolev estimate.
\begin{prop}[Sharp line to non-sharp region]\label{prop1}
Let $d\geq 1$, $\sigma \geq \frac{1}{2}$, $a\in \mathbb{R}$, and define
\begin{equation}\label{eq:sapq}
s_a(p,q)=\frac{a}{p}+d\,\Bigl(\frac{1}{2}-\frac{1}{q}\Bigr).\end{equation}
Assume that, for all sharp $\sigma$-admissible pairs $(p,q)$ with $\bigl(\frac{1}{q},\frac{1}{p}\bigr) \in (A_{\sigma}, E_{\sigma})$, the estimate
\begin{equation}\label{ns3}
\Bigl\| \sum_{j} \nu_j \bigl|e^{itP}\langle D\rangle^{-s_a(p,q)} f_j\bigr|^2\Bigr\|_{L_t^{p/2}L_x^{q/2}(I\times M)} \lesssim  \| \nu \|_{\ell^\beta}
\end{equation}
holds with $\beta=\beta_\sigma(p,q)$ given by \eqref{eq:beta}, for all orthonormal systems $(f_j)_j$ in $L^2(M)$ and all $\nu=(\nu_j)_j \in \ell^{\beta}(\mathbb C)$.
Then \eqref{ns3} also holds with $\beta=\beta_\sigma(p,q)$ for all non-sharp $\sigma$-admissible pairs $(p,q)$ satisfying $p>\frac{2\sigma-1}{2\sigma}q$, i.e., $\bigl(\frac{1}{q},\frac{1}{p}\bigr)$ lies in the interior of triangle $OA_\sigma B$.
\end{prop}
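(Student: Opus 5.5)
The plan is to interpolate between the assumed sharp-line estimate \eqref{ns3} on $(A_\sigma,E_\sigma)$ and the Lieb--Sobolev inequality (Lemma~\ref{lemma}), which plays the role of a "$1/p=0$" Strichartz estimate. At $1/p=0$, i.e.\ on the open segment $(O,B)$, I use that $e^{itP}$ is unitary and commutes with $\langle D\rangle$. Hence for each fixed $t$ the family $(e^{itP}f_j)_j$ is orthonormal in $L^2(M)$. Lemma~\ref{lemma} with exponent $q/2$ (so that $\frac{d}{2(q/2)'}=d(\frac12-\frac1q)=s_a(\infty,q)$) then gives, uniformly in $t$,
\[
\Bigl\|\sum_j\nu_j\bigl|e^{itP}\langle D\rangle^{-s_a(\infty,q)}f_j\bigr|^2\Bigr\|_{L^\infty_tL^{q/2}_x(I\times M)}\lesssim\|\nu\|_{\ell^{q/2,1}},\qquad 2<q<\infty .
\]
Note that $\sigma/\beta_\sigma(\infty,q)=2\sigma/q$, i.e.\ $\beta_\sigma(\infty,q)=q/2$. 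So this is exactly the expected estimate at the point $(1/q,0)$, except that the source space is $\ell^{q/2,1}$ rather than $\ell^{q/2}$.

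\textbf{Geometry.} Every point of the open triangle $OA_\sigma B$ lies on an open segment joining a point $(1/q_0,1/p_0)\in(A_\sigma,E_\sigma)$ to a point $(1/q_1,0)$ with $0<1/q_1<1/2$. For points near $A_\sigma$ one uses a segment through a sharp point close to $A_\sigma$. For points near $B$ one uses a steep segment from a bottom point near $B$ to a sharp point near $E_\sigma$. The quantities $1/p$, $1/q$, $s_a(p,q)$ and $\sigma/\beta_\sigma(p,q)=1/p+2\sigma/q$ are all affine in $(1/q,1/p)$. Therefore, writing $\frac1p=\frac{1-\theta}{p_0}$ and $\frac1q=\frac{1-\theta}{q_0}+\frac{\theta}{q_1}$, the regularity index, the Lebesgue exponents and $1/\beta$ all interpolate correctly: $1/\beta_\sigma(p,q)=(1-\theta)/\beta_0+\theta/\beta_1$.

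\textbf{Interpolation.} To run the interpolation I fix the orthonormal system $(f_j)_j$ and regard the map
\[
(\nu,z)\mapsto\sum_j\nu_j\bigl|e^{itP}\langle D\rangle^{-z}f_j\bigr|^2
\]
as linear in $\nu$. I then set up a Stein analytic family in which the regularity $z$ moves along the strip. Since $\langle D\rangle^{i\tau}$ is unitary and commutes with $e^{itP}$, the imaginary parts are harmless: the system $\langle D\rangle^{i\tau}f_j$ is still orthonormal. It is cleaner to do this in the dual/Schatten formulation, i.e.\ bounding $\operatorname{Tr}(\Gamma\,\cdot)$ against $W\in L^{(p/2)'}_tL^{(q/2)'}_x$, with $W$ also analytically deformed in its exponents. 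Stein interpolation then gives \eqref{ns3} at $(p,q)$.

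\textbf{Main obstacle.} The main difficulty is the Lorentz endpoint: complex interpolation of $\ell^{\beta_0}$ with $\ell^{\beta_1,1}$ yields only $\ell^{\beta,r}$ with $r<\beta$, which is weaker than the claimed $\ell^{\beta}$. I would first try to upgrade the $1/p=0$ endpoint to a strong $\ell^{q/2}$ bound. To do this I would take two exponents $q_1^\pm$ near $q_1$, apply Lemma~\ref{lemma} at each, and use real interpolation $(\cdot,\cdot)_{\theta,q_1/2}$ of the bilinear pairing $(\Gamma,V)\mapsto\operatorname{Tr}(\Gamma\,\langle D\rangle^{-s}V\langle D\rangle^{-s})$. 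This pairing is linear in each variable, so the multilinear real interpolation theorem applies. The changing regularity $s_a(\infty,q)$ is handled by an analytic family in $s$ as above. Using $(\ell^{a,1},\ell^{b,1})_{\theta,r}=\ell^{r}$ and $(L^{a'},L^{b'})_{\theta,(r)'}=L^{r',(r)'}=L^{r'}$ with $r=q_1/2$, this produces
\[
\sup_t\Bigl\|\sum_j\nu_j|e^{itP}\langle D\rangle^{-s_a(\infty,q_1)}f_j|^2\Bigr\|_{L^{q_1/2}}\lesssim\|\nu\|_{\ell^{q_1/2}} .
\]
Once that strong-type endpoint is available, the Stein interpolation described above between it and \eqref{ns3} on $(A_\sigma,E_\sigma)$ yields $\ell^{\beta_\sigma(p,q)}$ with no loss, for every $(1/q,1/p)$ in the open triangle $OA_\sigma B$, which is equivalent to $p>\frac{2\sigma-1}{2\sigma}q$.
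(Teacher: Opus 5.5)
Your Lieb--Sobolev endpoint \eqref{ns4}, your geometry, and your diagnosis of the Lorentz loss all match the paper. The remedy you propose fails, however, because the strong-type endpoint on the edge $1/p=0$ is false. For $t_0\in I$, replacing $f_j$ by $e^{-it_0P}g_j$ shows that your claim
\[
\sup_{t\in I}\Bigl\|\sum_j\nu_j\bigl|e^{itP}\langle D\rangle^{-s}f_j\bigr|^2\Bigr\|_{L^{q_1/2}(M)}\lesssim\|\nu\|_{\ell^{q_1/2}},\qquad 2s=d-\tfrac{2d}{q_1},
\]
implies the strong-type Lieb--Sobolev inequality $\|\sum_j\nu_j|\langle D\rangle^{-s}g_j|^2\|_{L^{q_1/2}(M)}\lesssim\|\nu\|_{\ell^{q_1/2}}$ for every orthonormal $(g_j)_j$. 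This inequality fails:
\begin{itemize}
\item Take for $(g_j)$ all eigenfunctions with $\lambda_k\in[2^l,2^{l+1})$, $l_0\le l<l_0+L$, with weight $2^{-2ld/q_1}$ on the $l$-th block.
\item By the local Weyl law the left side is pointwise $\gtrsim\sum_l 2^{-2ld/q_1}2^{-2ls}2^{ld}=L$.
\item On the other hand $\|\nu\|_{\ell^{q_1/2}}\approx L^{2/q_1}$, which contradicts the bound for large $L$ since $q_1>2$.
\end{itemize}
By contrast $\|\nu\|_{\ell^{q_1/2,1}}\approx L$, so Lemma~\ref{lemma} is sharp. Dually, $\langle D\rangle^{-d/r}\in\mathfrak S^{r,\infty}\setminus\mathfrak S^{r}$ because $\sum_k\langle\lambda_k\rangle^{-d}=\infty$, so the Cwikel bound of Lemma~\ref{lem0} cannot be upgraded. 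This is the same logarithmic pile-up as in the critical-line example of Section~\ref{sec:proof2}.

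The step that breaks is your real interpolation in $q_1^\pm$. On the edge $1/p=0$ the regularity $d(\frac12-\frac1q)$ is strictly monotone in $q$. So you would be interpolating two \emph{different} bilinear forms $\operatorname{Tr}(\Gamma\langle D\rangle^{-s^\pm}V\langle D\rangle^{-s^\pm})$. The real method needs one fixed operator. An analytic family in $s$ is a complex-method device, and it only returns $\ell^{\cdot,1}$-type bounds again.

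The paper avoids this as follows. It keeps the Lorentz endpoint and complex-interpolates to get $\ell^{\beta_\sigma(p,q),1}$ bounds in the interior. Only then does it real-interpolate, at an interior point of $OA_\sigma E_\sigma$, between two nearby points on a level line of $s_a$. This is possible only because $1/p$ is free there, and it keeps the operator fixed. With second index $p/2$ one obtains $L^{p/2}_tL^{q/2,p/2}_x$ on the left and $\ell^{\beta_\sigma(p,q),p/2}$ on the right. The conditions $p<q$ and $\beta_\sigma(p,q)<p/2$ then yield the strong bound; the latter is exactly $p>\frac{2\sigma-1}{2\sigma}q$. A final interpolation with $B$ finishes the proof. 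The gain comes from finite time integrability acting as the second Lorentz index, which is unavailable at $1/p=0$.

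This step also needs $\beta_\sigma$ to vary along the level line of $s_a$, i.e.\ $a\neq-\frac{d}{2\sigma}$. This excluded case includes the illustrative choice $a=-1$, $\sigma=\frac d2$ in the paper's proof. When $a=-\frac{d}{2\sigma}$, one has $\frac1{\beta_\sigma(p,q)}=1-\frac{2s_a(p,q)}{d}$ identically. The block example with weights $2^{-ld/\beta_\sigma(p,q)}$ (for $P$ a function of $\Delta$) then shows that hypothesis \eqref{ns3} itself fails on $(A_\sigma,E_\sigma)$, so the statement is vacuous there.
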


\begin{proof}
The segment $(A_{\sigma}, E_{\sigma})$ corresponds to the range $\frac{2(\sigma+1)}{\sigma} <q<\frac{2(2\sigma+1)}{2\sigma-1}$.
For simplicity, assume $\sigma= \frac{d}{2}$ and $a=-1$ so that $s_a(p,q)=\gamma_\alpha(p,q)$ as the argument is independent of these parameters. See \eqref{eq:AE}, \eqref{eq:gama} and Figure \ref{fig1}.  
    
From \eqref{eq:beta} and \eqref{eq:sapq}, we have
\[
s_a(\infty,q) = d\left(\frac{1}{2}-\frac{1}{q}\right) = \frac{d}{2} \cdot \frac{1}{(q/2)'}, \qquad \beta_\sigma(\infty,q) = \frac{q}{2}.
\] Since the dispersive flow $(e^{itP}f_j)_j$ preserves orthonormality for any orthonormal initial data $(f_j)_j \subset L^2(M)$, Lemma \ref{lemma}~ yields for any $q>2$
\begin{equation}\label{ns4}
\Big\| \sum_{j} \nu_j |e^{itP}\langle D\rangle^{-s_a(\infty, q)} f_j|^2\Big\|_{L^\infty_t L^{q/2}_x(I\times M)} \lesssim \| \nu \|_{\ell^{\frac{q}{2},1}}.
\end{equation}
Applying complex interpolation between \eqref{ns3} and \eqref{ns4}, we obtain the weaker (Lorentz) Strichartz estimate
\[
\Big\| \sum_{j} \nu_j |e^{itP}\langle D\rangle^{-s_a(p,q)} f_j|^2\Big\|_{L_t^{p/2}L_x^{q/2}(I\times M)} \lesssim \| \nu \|_{\ell^{\beta_\sigma(p,q),1}},
\]
    valid for $(\frac{1}{q},\frac{1}{p})$ in the interior of triangle $OA_{d/2}B$.  The interpolation can proceed directly or in two stages: first between the segment $(A_{d/2}, E_{d/2})$ and point $B$ to obtain estimates on $(A_{d/2}, B]$, then between segment $(A_{d/2}, B]$ and points near $O$ to reach the interior. See Figure \ref{fig1}. 
    
Although the discrete exponent $\beta$ is already sharp in the weaker Lorentz form, we use real interpolation to upgrade it to strong-type estimates. It suffices to work in the triangle $OA_{d/2}E_{d/2}$, since another interpolation extends the result to the full region.

 Fix $(\frac{1}{q},\frac{1}{p})$ in the interior of $OA_{d/2}E_{d/2}$ and choose $(\frac{1}{q_i},\frac{1}{p_i})$ ($i=0,1$) in the same region with $s_a(p,q) = s_a(p_0,q_0) = s_a(p_1,q_1)$. Then for some $\theta \in (0,1)$,
\[
\frac{1}{p} = \frac{1-\theta}{p_0} + \frac{\theta}{p_1}, \quad \frac{1}{q} = \frac{1-\theta}{q_0} + \frac{\theta}{q_1}.
\]     
 By real interpolation (see for example \cite[(3.5-3.6)]{bez2019strichartz} or references therein),
\[(L^{\frac{p_0}{2}}_t L^{\frac{q_0}{2}}_x, L^{\frac{p_1}{2}}_t L^{\frac{q_1}{2}}_x)_{\theta,\frac{p}{2}} = L^{\frac{p}{2}}_t L^{\frac{q}{2},\frac{p}{2}}_x, \qquad 
(\ell^{\beta_\sigma(p_0,q_0),1}, \ell^{\beta_\sigma(p_1,q_1),1})_{\theta,\frac{p}{2}} = \ell^{\beta_\sigma(p,q),\frac{p}{2}},\]
and consequently,
\[
\Big\| \sum_{j} \nu_j |e^{itP}\langle D\rangle^{-s_a(p,q)} f_j|^2\Big\|_{L^{\frac{p}{2}}_t L^{\frac{q}{2},\frac{p}{2}}_x(I\times M)} \lesssim \| \nu \|_{\ell^{\beta_\sigma(p,q),p/2}}.
\]
Since $p<q$ and $ \beta_\sigma(p,q)<p/2$, we have the embeddings
    $$L^{\frac{p}{2}}_t L^{\frac{q}{2},\frac{p}{2}}_x \subset L^{\frac{p}{2}}_t L^{\frac{q}{2}}_x, \qquad \ell^{\beta_\sigma(p,q)}\subset \ell^{\beta_\sigma(p,q),\frac{p}{2}},$$
and consequently the strong estimate
    \begin{equation}\label{ns5}
        \Big\| \sum_{j} \nu_j |e^{itP}\langle  D\rangle^{-s_a(p,q)} f_j|^2\Big\|_{L_t^{p/2}L_x^{q/2}(I\times M)} \lesssim  \| \nu \|_{\ell^{\beta_\sigma (p,q)}}
    \end{equation}
for all $(\frac{1}{q},\frac{1}{p})$ in the interior of the triangle $O A_{d/2} E_{d/2}$.

Finally, by Plancherel's theorem,
\[
\|e^{itP} f\|_{L^\infty_t L^2_x} \lesssim \|f\|_{L^2},
\]
which gives \eqref{ns5} at the trivial point $B$ (where $(p,q,s,\beta) = (\infty,2,0,1)$) via the triangle inequality. 

A final interpolation extends \eqref{ns5} to the interior of triangle $OA_{d/2}B$.
\end{proof}

\subsection{Proof of Theorem \ref{thm:schodinger}}
(I) Let $(p,q)$ be a non-sharp $\frac{d}{2}$-admissible pair lying in the interior of the triangle $O A_{d/2} B$. Now we want to prove the Strichartz estimate \eqref{eq:str} with $P=\Delta^{\alpha/2}$ holds with $\beta=\beta_{d/2}(p,q)$ for all orthonormal systems $(f_j)_j\in H^s(M)$ with $s>\gamma_\alpha(p,q)$.

For every fixed $s>\gamma_{\alpha}(p,q)$, there exists a constant $a \in \mathbb{R}$ such that $s=s_a(p,q)=\frac{a}{p}+d(\frac{1}{2}-\frac{1}{q})$.
For every fixed orthonormal system $(f_j)_j$ in $H^{s}(M)$, let $g_j=\langle D \rangle^{s} f_j$, 
then $(g_j)_j$ is an orthonormal system in $L^2(M)$.

By Theorem \ref{thm:sharp},  if $(p,q)$ is sharp $\frac{d}{2}$-admissible   such that $(\frac{1}{q},\frac{1}{p}) \in (A_{d/2},B]$,  the inequality
\[
    \Big\|\sum_j  \nu_j |e^{it\Delta^{\alpha/2}} f_j|^2\Big\|_{L_t^{p/2}L_x^{q/2}(I\times M)} \lesssim 
    \|\nu\|_{\ell^\beta}
\]
holds with $\beta=\beta_{\frac{d}{2}}(p,q)=\frac{2q}{q+2}$.
Hence on the line segment $(A_{d/2}, B]$, we have
\[
    \Big\|\sum_j  \nu_j |e^{it\Delta^{\alpha/2}} \langle D \rangle^{-s_a(p,q)} g_j|^2\Big\|_{L_t^{p/2}L_x^{q/2}(I\times M)} \lesssim 
    \|\nu\|_{\ell^\beta}
\]
with the same system exponent $\beta=\beta_{\frac{d}{2}}(p,q)$.
By Proposition \ref{prop1}, we obtain that,  for all non-sharp admissible exponents $(p,q)$ with $(1/q,1/p)$ lying in the interior of the triangle $OA_{d/2}B$,  the inequality
\[
  \Big\|\sum_j  \nu_j |e^{it\Delta^{\alpha/2}} f_j|^2\Big\|_{L_t^{p/2}L_x^{q/2}(I\times M)} =   \Big\|\sum_j  \nu_j |e^{it\Delta^{\alpha/2}} \langle D \rangle^{-s_a(p,q)} g_j|^2\Big\|_{L_t^{p/2}L_x^{q/2}(I\times M)} \lesssim 
    \|\nu\|_{\ell^\beta}
\]
holds  with $\beta=\beta_{\frac{d}{2}}(p,q)$. Since $(f_j)_j$ was arbitrary, this proves the theorem in the triangle $OA_{d/2}B$.

(II) Now we consider the case when $(\frac{1}{q},\frac{1}{p})$ lies in the interior of the quadrilateral $O A_{d/2} C_{d/2} D$.
We first prove that  \eqref{eq:str} holds with $\beta =1$, on the line segment $[C_{d/2}, D)$.
For all $(1/q,1/p)\in (C_{d/2},D)$,
by Theorem \ref{sig0} and Minkowski's inequality, we have,  
for all orthonormal systems $(f_j)_j$ in $H^{s}(M)$ with $s\geq\gamma_{\alpha}(p,q)$, 
\begin{align*}
    \Big\|\sum_j  \nu_j |e^{it\Delta^{\alpha/2}} f_j|^2\Big\|_{L_t^{p/2}L_x^{q/2}(I\times M)} &\leq \sum_j |\nu_j| \|e^{it\Delta^{\alpha/2}} f_j\|_{L_t^{p}L_x^{q}(I\times M)}^2 \\
    &\lesssim \sum_j |\nu_j| \|f_j\|_{H^s(M)}^2 \\
    &= \|\nu\|_{\ell^1}.
\end{align*}
Note that, when $(\frac{1}{q},\frac{1}{p})$ lies on the line segment $(O, A_{d/2})$, $\beta_{\frac{d}{2}}(p,q)=\frac{p}{2}$.
Applying the interpolation between the estimates \eqref{eq:str} for points on the line segment $(C_{d/2}, D)$ 
and the estimates for points inside the triangle $O A_{d/2} B$ that approach the line segment $(O, A_{d/2})$ arbitrarily closely, 
we complete the proof in the interior of the quadrilateral $O A_{d/2} C_{d/2} D$.

\subsection{Proof of Theorem \ref{ns2}}
Proof of the Wave/Klein-Gordon case is similar. 

(I) Let $(p,q)$ be a non-sharp $\frac{d-1}{2}$-admissible pair lying in the interior of the triangle $O A_{\frac{d-1}2} B$. Now we want to prove the Strichartz estimate \eqref{eq:str} with $P=\sqrt{m^2+\Delta}$ holds with $\beta=\beta_{\frac{d-1}2}(p,q)$ for all orthonormal systems $(f_j)_j\in H^s(M)$ with $s>\gamma(p,q)$.

For every fixed $s>\gamma(p,q)$, there exists a constant $a \in \mathbb{R}$ such that $s=s_a(p,q)=\frac{a}{p}+d(\frac{1}{2}-\frac{1}{q})$.
For every fixed orthonormal system $(f_j)_j$ in $H^{s}(M)$, let $g_j=\langle D \rangle^{s} f_j$, 
then $(g_j)_j$ is an orthonormal system in $L^2(M)$.

By the wave/Klein-Gordon  version of Theorem \ref{thm:sharp} (\cite[Theorem 3]{wang2025strichartz}),  if $(p,q)$ is sharp $\frac{d-1}{2}$-admissible   such that $(\frac{1}{q},\frac{1}{p}) \in (A_{\frac{d-1}2},B]$,  the inequality
\[
    \Big\|\sum_j  \nu_j |e^{it\sqrt{m^2+\Delta}} f_j|^2\Big\|_{L_t^{p/2}L_x^{q/2}(I\times M)} \lesssim 
    \|\nu\|_{\ell^\beta}
\]
holds with $\beta=\beta_{\frac{d-1}{2}}(p,q)=\frac{2q}{q+2}$.
Hence on the line segment $(A_{\frac{d-1}2}, B]$, we have
\[
    \Big\|\sum_j  \nu_j |e^{it\sqrt{m^2+\Delta}} \langle D \rangle^{-s_a(p,q)} g_j|^2\Big\|_{L_t^{p/2}L_x^{q/2}(I\times M)} \lesssim 
    \|\nu\|_{\ell^\beta}
\]
with the same system exponent $\beta=\beta_{\frac{d-1}{2}}(p,q)$.
By Proposition \ref{prop1}, we obtain that,  for all non-sharp admissible exponents $(p,q)$ with $(1/q,1/p)$ lying in the interior of the triangle $OA_{\frac{d-1}2}B$, the inequality
\begin{multline*}
  \Big\|\sum_j  \nu_j |e^{it\sqrt{m^2+\Delta}} f_j|^2\Big\|_{L_t^{p/2}L_x^{q/2}(I\times M)} =  \Big\|\sum_j  \nu_j |e^{it\sqrt{m^2+\Delta}} \langle D \rangle^{-s_a(p,q)} g_j|^2\Big\|_{L_t^{p/2}L_x^{q/2}(I\times M)}\\
   \lesssim 
    \|\nu\|_{\ell^\beta}
\end{multline*}
holds  with $\beta=\beta_{\frac{d-1}{2}}(p,q)$. Since $(f_j)_j$ was arbitrary, this proves the theorem in the triangle $OA_{\frac{d-1}2}B$.

(II) Now we consider the case when $(\frac{1}{q},\frac{1}{p})$ lies in the interior of $O A_{(d-1)/2} C_{(d-1)/2} D$.
We first prove via Theorem \ref{sig1} and Minkowski's inequality that, on the line segment $(C_{(d-1)/2}, D)$, the inequality
\[
    \Big\|\sum_j  \nu_j |e^{it\sqrt{\Delta+m^2}} f_j|^2\Big\|_{L_t^{p/2}L_x^{q/2}(I\times M)} \leq   \|\nu\|_{\ell^1}
\]
holds for all orthonormal systems $(f_j)_j$ in $H^{s}(M)$ with $s\geq\gamma(p,q)$.
Note that, when $(\frac{1}{q},\frac{1}{p})$ lies on the line segment $(O, A_{(d-1)/2})$,  $\beta_{\frac{d-1}{2}}(p,q)=\frac{p}{2}$.
By interpolation  between  points on the line segment $(C_{(d-1)/2},D)$ 
and points inside the triangle $O A_{(d-1)/2} B$ that approach the line segment $(O, A_{(d-1)/2})$ arbitrarily closely, 
we complete the proof in the interior of $O A_{(d-1)/2} C_{(d-1)/2} D$.

\section{Another proof based on localized weak Lorentz estimates} \label{sec:proof2}

The proof in Section \ref{sec:proof1} is based on Proposition \ref{prop1}, following the strategy of \cite{bez2021strichartz}.
In this section, we present another proof based on \cite{bez2019strichartz}. 
This second approach does not use the Lieb--Sobolev inequality from Lemma~\ref{lemma}; instead, it combines frequency-localized estimates with a globalization argument in weak Lorentz spaces.

We recall the duality principle of Frank--Sabin \cite{frank2017restriction} and two frequency-localized dispersive estimates, 
due respectively to \cite{MR2058384, dinh2016strichartz} and \cite{cacciafesta2024strichartz, fio}.

\begin{lem}[Duality principle]\label{dua} 
    Let $p,q\ge 2$, $r_1, r_2, \alpha, \beta \ge 1$. Suppose that $T$ is a bounded operator from $L^2$ to $L_t^{p,2r_1} L_x^{q,2r_2}$. Then the inequality
    \[
    \Bigl\|\sum_j \nu_j |T f_j|^2\Bigr\|_{L_t^{\frac{p}{2}, r_1} L_x^{\frac{q}{2}, r_2}} \le C \|\nu\|_{\ell^{\alpha,\beta}}
    \]
    holds for all orthonormal systems $(f_j)_j$ in $L^2$ and all $\nu = (\nu_j)_j$ in $\ell^{\alpha,\beta}$ if and only if the inequality
    \[
    \|W T T^* \overline{W}\|_{\mathfrak S^{\alpha',\beta'}} \le C \|W\|_{L_t^{2(\frac{p}{2})', 2r_1'} L_x^{2(\frac{q}{2})', 2r_2'}}^2
    \]
    holds for all $W \in L_t^{2(\frac{p}{2})', 2r_1'} L_x^{2(\frac{q}{2})', 2r_2'}$.
\end{lem}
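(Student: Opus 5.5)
The plan is to follow Frank--Sabin and reduce the statement to the standard operator identity. We set $\gamma=\sum_j \nu_j |f_j\rangle\langle f_j|$, so that $\|\gamma\|_{\mathfrak S^{\alpha,\beta}}=\|\nu\|_{\ell^{\alpha,\beta}}$. The density is $\rho_{T\gamma T^*}(t,x)=\sum_j\nu_j|Tf_j|^2$.

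For any $W$, we have the identity
\[
\int \rho_{T\gamma T^*}\,|W|^2\,dx\,dt=\operatorname{Tr}\bigl(\gamma\, T^*|W|^2T\bigr)=\operatorname{Tr}\bigl(\gamma\,(\overline W T)^*(\overline W T)\bigr).
\]
This is verified first for finite-rank $\gamma$ and then extended by density.

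\textbf{Direction ($\Leftarrow$).} We estimate the left-hand side by the Schatten Hölder inequality $|\operatorname{Tr}(AB)|\le \|A\|_{\mathfrak S^{\alpha,\beta}}\|B\|_{\mathfrak S^{\alpha',\beta'}}$. The key observation is that the nonzero singular values of $(\overline W T)^*(\overline W T)$ coincide with those of $(\overline W T)(\overline W T)^*=\overline W TT^*W$. The latter is unitarily equivalent, via complex conjugation, to the operator $WTT^*\overline W$, so all three have the same Schatten norm. This gives
\[
\Bigl|\int \rho\,|W|^2\Bigr|\le C\|\nu\|_{\ell^{\alpha,\beta}}\,\|W\|^2_{L_t^{2(p/2)',2r_1'}L_x^{2(q/2)',2r_2'}}.
\]
Writing $V=|W|^2$, we have $\|V\|_{L_t^{(p/2)',r_1'}L_x^{(q/2)',r_2'}}\approx\|W\|^2$ in the mixed Lorentz norm; this uses the scaling $\|\,|g|^2\|_{L^{a,b}}=\|g\|^2_{L^{2a,2b}}$, applied in each variable. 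For complex $V$ we split into real and imaginary parts, and then into positive and negative parts. Duality of mixed Lorentz spaces, $(L^{p/2,r_1}_tL^{q/2,r_2}_x)^*\cong L^{(p/2)',r_1'}_tL^{(q/2)',r_2'}_x$ up to constants (valid since $p/2,q/2>1$ in the non-degenerate range; the boundary cases are handled with the usual conventions), then yields the density bound.

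\textbf{Direction ($\Rightarrow$).} We reverse the argument. For $\gamma\ge0$ of finite rank, the density bound together with Hölder's inequality in Lorentz spaces gives
\[
|\operatorname{Tr}(\gamma\,(\overline WT)^*(\overline WT))|\le C\|\gamma\|_{\mathfrak S^{\alpha,\beta}}\|W\|^2.
\]
For general self-adjoint $\gamma$ we decompose into positive and negative parts, and for general $\gamma$ into real and imaginary parts; by the spectral theorem these decompositions are orthonormal expansions. Taking the supremum over $\gamma$ and using the duality $(\mathfrak S^{\alpha,\beta})^*=\mathfrak S^{\alpha',\beta'}$ for the trace pairing gives the Schatten bound on $(\overline WT)^*(\overline WT)$, which is again equivalent to the bound on $WTT^*\overline W$. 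Here $\mathfrak S^{\alpha',\beta'}$ is understood as the Köthe dual, for example $\mathfrak S^{\infty}$-type or weak-type when $\beta=1$.

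\textbf{Main obstacle.} The technical point I expect to be hardest is the two dualities with the correct Lorentz exponents: the mixed-norm Lorentz duality and the Schatten--Lorentz duality. In particular, endpoint cases such as $\beta=1$, $\beta'=\infty$ or $r_i=1$ require the Köthe-dual (norming) formulation rather than literal Banach duality. I would handle this by proving the norming property on finite-rank or simple-function dense classes, which suffices for both directions.
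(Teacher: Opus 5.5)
Your proposal is correct and is essentially the standard Frank--Sabin duality argument, in the Lorentz-space form of Bez--Hong--Lee--Nakamura--Sawano. This is exactly what the paper relies on: it states Lemma~\ref{dua} by citation to \cite{frank2017restriction} and does not prove it.

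One justification needs fixing. $\overline W TT^*W$ and $WTT^*\overline W$ are not related by complex conjugation. Conjugation is antiunitary, and conjugating $\overline W TT^*W$ by it gives $W\,\overline{TT^*}\,\overline W$, where $\overline{TT^*}$ has the conjugated kernel and in general differs from $TT^*$. Instead, note that both operators are unitarily conjugate to $|W|\,TT^*\,|W|$, via multiplication by the phase $W/|W|$. More simply, replace $W$ by $\overline W$ in the inequality quantified over all $W$, since $\|\overline W\|=\|W\|$.
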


\begin{lem}\label{din}
	Let $\alpha\in (0,\infty)\setminus\{1\}$ and $\varphi\in C_0^\infty(\mathbb{R}\setminus\{0\})$. Then there exist $t_0>0$ and $C>0$ such that for all $h\in (0,1]$ and $t\in [-t_0h^{\alpha-1},t_0h^{\alpha-1}]$,  
	\[
	\|e^{it\Delta^{\alpha/2}}\varphi(h\sqrt{\Delta})f\|_{L^\infty(M)}\le Ch^{-d}(1+|t|h^{-\alpha})^{-d/2}\|f\|_{L^1(M)}.
	\]

\end{lem}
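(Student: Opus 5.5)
The plan is to follow the standard semiclassical parametrix argument of Burq--G\'erard--Tzvetkov, which Dinh adapted to the fractional case. First, localize in frequency and reduce to a kernel bound. With $\varphi\in C_0^\infty(\mathbb R\setminus\{0\})$ supported in $\{c\le|\lambda|\le C\}$, we write
\[
e^{it\Delta^{\alpha/2}}\varphi(h\sqrt\Delta)=e^{ith^{-\alpha}(h^2\Delta)^{\alpha/2}}\varphi(h\sqrt{\Delta}).
\]
Set $\tau=th^{-\alpha}$. The restriction $|t|\le t_0h^{\alpha-1}$ is exactly $|\tau|\le t_0h^{-1}$, i.e.\ semiclassical times of size $O(1)$ in the natural time variable $s=\tau h$. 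The estimate then amounts to showing that the Schwartz kernel $K_h(t,x,y)$ satisfies $|K_h|\lesssim h^{-d}(1+|\tau|)^{-d/2}$ uniformly.

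\textbf{Parametrix.} By a partition of unity and compactness of $M$, we work in coordinate charts. Using the functional calculus of Helffer--Sj\"ostrand, we write $\varphi(h\sqrt\Delta)$ as an $h$-pseudodifferential operator $\Phi_h$ with symbol supported in $\{c\le|\xi|_g\le C\}$, modulo $O(h^\infty)$ in $L^1\to L^\infty$ (Sobolev embedding handles the remainder). The operator $(h^2\Delta)^{\alpha/2}$, microlocalized to this annulus, is an $h$-pseudodifferential operator with principal symbol $p(x,\xi)=|\xi|_g^\alpha$. This symbol is smooth there since $\xi\neq0$, which is why we localize away from $0$. We then construct a WKB parametrix valid for $|s|\le t_0$:
\[
e^{-is h^{-1}P_h}\Phi_h f(x)\approx (2\pi h)^{-d}\int e^{\frac ih(S(s,x,\xi)-y\cdot\xi)}a_h(s,x,y,\xi)f(y)\,dy\,d\xi .
\]
Here $S$ solves the eikonal equation $\partial_sS+p(x,\partial_xS)=0$ with $S(0)=x\cdot\xi$, which is solvable for $|s|\le t_0$ small by Hamilton--Jacobi theory. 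The amplitude $a_h$ solves the transport equations. The error is $O(h^\infty)$ by Duhamel's formula and unitarity.

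\textbf{Stationary phase, the main obstacle.} For $|s|\le h$ (i.e.\ $|\tau|\le 1$), the trivial bound $h^{-d}\cdot\mathrm{vol}(\text{annulus})$ suffices. For $h\le|s|\le t_0$, we apply stationary phase in $\xi$ with large parameter $|s|/h=|\tau|$. The phase is $\frac{1}{h}(x\cdot\xi-y\cdot\xi-sp(x,\xi)+O(s^2))$, and its Hessian in $\xi$ is $-s(\partial^2_\xi p+O(s))$. The key point is that $\partial_\xi^2|\xi|_g^\alpha$ is nondegenerate on the annulus precisely when $\alpha\neq1$: its eigenvalues are $\alpha|\xi|^{\alpha-2}$ (transversal) and $\alpha(\alpha-1)|\xi|^{\alpha-2}$ (radial). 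This is where the hypothesis $\alpha\ne1$ enters. For $t_0$ small, the $O(s)$ perturbation preserves nondegeneracy uniformly, and we get the bound $h^{-d}(|s|/h)^{-d/2}=h^{-d}|\tau|^{-d/2}$. The delicate part is the uniformity of the stationary phase bounds in $x,y$ and in the symbol class, so I would rescale $\xi$ so that the amplitude has uniformly bounded derivatives.

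\textbf{Conclusion.} Patching the charts and recalling $|\tau|=|t|h^{-\alpha}$ gives $h^{-d}(1+|t|h^{-\alpha})^{-d/2}$ for $|t|\le t_0h^{\alpha-1}$.
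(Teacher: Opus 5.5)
Your proposal is correct and is essentially the argument the paper relies on. The paper gives no proof of its own and cites Burq--G\'erard--Tzvetkov and Dinh, whose proofs follow your outline: rescale to semiclassical time $s=th^{1-\alpha}$ with $|s|\le t_0$, build a WKB parametrix for the frequency-localized propagator, and apply stationary phase using that $\partial_\xi^2|\xi|_g^\alpha$ is nondegenerate exactly when $\alpha\neq1$. The one step to make precise is your ``microlocalized'' step: replace $(h^2\Delta)^{\alpha/2}$ by $\Psi(h^2\Delta)$, where $\Psi\in C_0^\infty((0,\infty))$ equals $\mu^{\alpha/2}$ near the support of $\mu\mapsto\varphi(\sqrt{\mu})$. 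Then $e^{it\Delta^{\alpha/2}}\varphi(h\sqrt{\Delta})=e^{ish^{-1}\Psi(h^2\Delta)}\varphi(h\sqrt{\Delta})$ holds exactly by functional calculus, and $\Psi(h^2\Delta)$ is a genuine $h$-pseudodifferential operator with principal symbol $|\xi|_g^\alpha$ on the relevant annulus.
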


\begin{lem}\label{dislem}
	Let $\varphi\in C_0^\infty(\mathbb{R}\setminus\{0\})$. There exist $t_0>0$ and $C>0$ such that for all $h\in (0,1]$ and $t\in [-t_0,t_0]$,
	\[
	\|e^{it\sqrt{m^2+\Delta}}\varphi(h\sqrt{\Delta})f\|_{L^\infty(M)}\le Ch^{-d}(1+|t|/h)^{-(d-1)/2}\|f\|_{L^1(M)}.
	\]
\end{lem}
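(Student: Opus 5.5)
We argue directly on the kernel. Write $P_m=\sqrt{m^2+\Delta}$ and let $(e_k,\lambda_k)$ be an orthonormal eigenbasis of $\sqrt{\Delta}$. The operator $e^{itP_m}\varphi(h\sqrt{\Delta})$ has kernel
\[
K_h(t,x,y)=\sum_k \varphi(h\lambda_k)\,e^{it\sqrt{m^2+\lambda_k^2}}\,e_k(x)\overline{e_k(y)} .
\]
It suffices to prove $\sup_{x,y}|K_h(t,x,y)|\lesssim h^{-d}(1+|t|/h)^{-(d-1)/2}$ uniformly in $h\in(0,1]$ and $|t|\le t_0$. We split into $|t|\le h$ and $h\le |t|\le t_0$.

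\emph{Short times $|t|\le h$.} Here the claimed bound is just $h^{-d}$. We drop the oscillating factor and use Cauchy--Schwarz with the local Weyl law $\sum_{\lambda_k\le \lambda}|e_k(x)|^2\lesssim (1+\lambda)^d$. Since $\varphi$ is supported where $h\lambda_k\approx 1$, this gives $|K_h|\lesssim h^{-d}$.

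\emph{Times $h\le|t|\le t_0$.} Rewrite the propagator semiclassically: $e^{itP_m}=e^{i(t/h)Q_h}$ with $Q_h=\sqrt{h^2m^2+h^2\Delta}$. The operator $Q_h$ is an $h$-pseudodifferential operator on the support of $\varphi(h\sqrt\Delta)$. Its symbol is $q(x,\xi)=|\xi|_{g(x)}+O(h^2)$, smooth away from $\xi=0$, and the mass contributes only an $O(h^2)$ perturbation (the case $m=0$ is the wave equation). Inserting a cutoff $\tilde\varphi(h\sqrt\Delta)$ with $\tilde\varphi\varphi=\varphi$, we build a Lax--H\"ormander WKB parametrix in local coordinates for $|t|\le t_0$, with $t_0$ small compared to the injectivity radius so that the eikonal equation $\partial_t\Phi=|\nabla_x\Phi|_g$, $\Phi(0,x,\xi)=x\cdot\xi$, has a smooth solution on the relevant region. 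This gives
\[
K_h(t,x,y)=(2\pi h)^{-d}\int_{\mathbb R^d} e^{\frac{i}{h}(\Phi(t,x,\xi)-y\cdot\xi)}\,a_h(t,x,y,\xi)\,d\xi+O(h^\infty),
\]
where $a_h$ is a symbol supported in $|\xi|\approx1$ with uniform bounds. Pairs $(x,y)$ outside a coordinate patch contribute $O(h^\infty)$ by finite propagation speed, or by nonstationary phase.

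\emph{Stationary phase.} Pass to polar coordinates $\xi=r\omega$. By homogeneity, $\Phi(t,x,r\omega)=r\,\Phi(t,x,\omega)$ up to the harmless $O(h^2)$ symbol correction, which is absorbed into $a_h$ since $t/h\cdot h^2\lesssim 1$. The $r$-integral is over a compact interval, so we apply stationary phase on the sphere $\omega\in\mathbb S^{d-1}$ with large parameter $|t|/h$. For small $t$, $\Phi(t,x,\omega)-y\cdot\omega\approx (x-y)\cdot\omega+t|\omega|_{g}$. The key point is that the cosphere $\{|\xi|_{g(x)}=1\}$ has nonvanishing Gaussian curvature, so the angular Hessian has rank $d-1$ at critical points, uniformly in $(t,x,y)$ for $|t|\le t_0$. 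This yields the decay $(|t|/h)^{-(d-1)/2}$, which together with the prefactor $h^{-d}$ gives the claim. Where $|x-y|$ is not comparable to $|t|$, the phase is nonstationary and we get faster decay.

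\emph{Main obstacle.} The hard part is the uniformity: we need the eikonal solution and the rank $d-1$ nondegeneracy to persist uniformly for all $|t|\le t_0$ and all $h$, including the $h$-dependent mass term. I would first treat the mass term exactly as above, as a lower-order symbol correction, and fix $t_0$ small depending only on $(M,g)$. If the details become heavy, the estimate can be quoted as the known frequency-localized dispersive bound of Cacciafesta--Danesi--Meng \cite{cacciafesta2024strichartz}.
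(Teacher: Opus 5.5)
The paper does not prove this lemma. It quotes it from \cite{cacciafesta2024strichartz, fio}, so your fallback is exactly what the authors do. Your outline is the standard argument behind those references: Weyl bound for $|t|\le h$, WKB parametrix for $e^{i(t/h)Q_h}$ on $|t|\le t_0$, nonstationary phase away from $|x-y|\approx|t|$, and stationary phase on it.

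The first two steps are fine. One small inaccuracy: in coordinates, the lower-order part of the symbol of $Q_h$ is generally $O(h)$, not $O(h^2)$, because $h^2\Delta_g$ has first-order terms. This is harmless, since on $|t|\le t_0$ it only affects the amplitude through the transport equations.

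The step that fails as written is the angular stationary phase. You restrict $F(\xi)=\Phi(t,x,\xi)-y\cdot\xi$ to the Euclidean unit sphere of the chart, treat the $r$-integral trivially, and assert that angular critical points are uniformly nondegenerate. For a degree-one homogeneous $F$, at a critical point of $F|_{\mathbb S^{d-1}}$ one has
\[
\mathrm{Hess}_{\mathbb S^{d-1}}F(\omega_c)=t\,\partial_\xi^2 p(x,\omega_c)\big|_{T_{\omega_c}\mathbb S^{d-1}}-F(\omega_c)\,\mathrm{Id}+O(t^2),\qquad p(x,\xi)=|\xi|_{g(x)}.
\]
The cosphere curvature only controls the first term. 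The second term depends on which sphere you normalize on, and it can cancel the first when $F(\omega_c)$ is of size $t$.

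This already happens for your model phase $(x-y)\cdot\omega+t|\omega|_{g(x)}$. Take $d=2$, $|\xi|_{g(x)}^2=\xi_1^2+4\xi_2^2$ and $x-y=(0,-\tfrac32 t)$. The angular phase is $\psi(\theta)=t\bigl(-\tfrac32\sin\theta+\sqrt{1+3\sin^2\theta}\bigr)$, and it satisfies $\psi(\tfrac{\pi}{2}+s)-\psi(\tfrac{\pi}{2})\approx\tfrac{3t}{64}s^4$. So the $\theta$-integral decays only like $(|t|/h)^{-1/4}$, even though $|x-y|=\tfrac32|t|$ lies in your ``comparable'' regime. At such points $F(\omega_c)=t/2\neq 0$, so integrating by parts in $r$ would kill them, but your argument does not do that.

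The repair is easy; either of the following works.
\begin{itemize}
\item Normalize $g(x)=\mathrm{Id}$ by an $x$-dependent linear change $\xi=A_x\eta$. The angular phase becomes $A_x^{T}(x-y)\cdot\omega+t+O(t^2)$. Its critical points lie near $\pm A_x^{T}(x-y)/|A_x^{T}(x-y)|$ and have Hessian $\mp|A_x^{T}(x-y)|\,\mathrm{Id}+O(t^2)$, which is nondegenerate of size $\approx|t|$ when $|x-y|\approx|t|$.
\item Avoid polar coordinates and apply the rank-$(d-1)$ stationary phase bound to the full $d$-dimensional integral, with phase $F/t$ and parameter $|t|/h$. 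The key input is that $\partial_\xi^2\Phi=t\,\partial_\xi^2 p+O(t^2)$ has rank $d-1$ with nonzero eigenvalues $\approx|t|$. This is the precise place where the curvature of the cosphere enters.
\end{itemize}
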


\subsection{Fractional Schr\"odinger case}

The key tool is the following frequency-localized version of Theorem \ref{thm:schodinger}.
\begin{prop}\label{prop:local}
    Let $d \geq 1$, $\alpha \in (0,\infty)\setminus\{1\}$. Let $P_l$ be the frequency-localized operator given by $P_l = \varphi(2^{-l} \sqrt{\Delta})$ for $l\in\mathbb Z$ and $\varphi \in C_0^\infty(\mathbb{R}\setminus\{0\})$. 
Then for all $\frac{d}{2}$-admissible pairs $(p,q)$ such that $(\frac{1}{q},\frac{1}{p})$ lies in the interior of the triangle $O A_{d/2} E_{d/2}$, and for all $l\in\mathbb Z$, we have
    \begin{equation}\label{local2}
        \Big\|\sum_j  \nu_j |e^{it\Delta^{\alpha/2}}P_l f_j|^2\Big\|_{L_t^{p/2}L_x^{q/2}(I\times M)} \lesssim
        2^{2l\gamma_{\alpha}(p,q)}\|\nu\|_{\ell^\beta}
    \end{equation}
for  all orthonormal systems $(f_j)_j$ in $L^2(M)$ and all sequences $\nu=(\nu_j)_j\in \ell^\beta$ with $\beta=\beta_{d/2}(p,q)$.
\end{prop}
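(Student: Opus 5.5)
Fix $l$ and write $h=2^{-l}$; for $l\le 0$ the bound is trivial by finite-dimensionality (only finitely many eigenvalues lie in the support of $\varphi(2^{-l}\cdot)$), so we may assume $h\in(0,1]$. The plan follows \cite{bez2019strichartz}. First we prove frequency-localized estimates on time intervals adapted to the dispersive estimate of Lemma~\ref{din}. Then we sum over these short intervals to cover $I$, keeping track of the powers of $2^l$.

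\emph{Step 1 (short-time Schatten bound).} Let $J$ be an interval of length $t_0h^{\alpha-1}$ and set $T=\1_J e^{it\Delta^{\alpha/2}}P_l$. Then $TT^*$ has kernel $\1_J(t)\1_J(t')\,e^{i(t-t')\Delta^{\alpha/2}}P_l^2(x,y)$. By Lemma~\ref{din}, this kernel is bounded by $Ch^{-d}(1+|t-t'|h^{-\alpha})^{-d/2}$ in $L^\infty_{x,y}$. After rescaling time by $t=h^{\alpha}\tau$, it becomes the Euclidean-type decay $h^{-d}(1+|\tau|)^{-d/2}$ on a $\tau$-interval of length $\approx h^{-1}$. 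Following Frank--Sabin and \cite{bez2019strichartz}, we embed $WTT^*\overline W$ into an analytic family $W\,(|t-t'|^{-z}\text{-type kernel})\,\overline W$. At $\mathrm{Re}\,z=0$ it is bounded by $L^2$ unitarity. At $\mathrm{Re}\,z$ large we use the Hilbert--Schmidt bound from the dispersive decay together with Hardy--Littlewood--Sobolev in time. Interpolating gives $\|WTT^*\overline W\|_{\mathfrak S^{\beta'}}\lesssim h^{-2\gamma}\|W\|^2_{L_t^{2(p/2)'}L_x^{2(q/2)'}}$. This holds on the sharp segment $(A_{d/2},E_{d/2})$ with $\beta=\beta_{d/2}(p,q)$. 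To reach the interior of $OA_{d/2}E_{d/2}$, we use the truncated (non-singular) decay $(1+|\tau|)^{-d/2}$. Since this is integrable in weak form off the sharp line, it allows $(1/q,1/p)$ below the line. Concretely, we obtain a restricted weak-type bound $L_t^{p/2,\infty}$, or $\ell^{\beta,1}\to L_t^{p/2,\infty}L_x^{q/2}$, and upgrade it to strong type by real interpolation as in Proposition~\ref{prop1}. Lemma~\ref{dua} converts all of this into
\[
\Big\|\sum_j\nu_j|e^{it\Delta^{\alpha/2}}P_lf_j|^2\Big\|_{L_t^{p/2}L_x^{q/2}(J\times M)}\lesssim h^{-2\gamma_*}\|\nu\|_{\ell^\beta},
\]
where $\gamma_*$ is the local regularity exponent. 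By scaling, $\gamma_* = \frac d2-\frac dq-\frac\alpha p$.

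\emph{Step 2 (globalization).} When $\alpha>1$, the interval $I$ is covered by $N\approx h^{1-\alpha}$ intervals $J_k$. Apply Step 1 on each $J_k$ with the translated data $e^{it_k\Delta^{\alpha/2}}f_j$, which is still orthonormal. Summing the $p/2$-th powers over $k$ gives the factor $N^{2/p}=h^{-2(\alpha-1)/p}$. Combined with $h^{-2\gamma_*}$, this yields $h^{-2(\frac d2-\frac dq-\frac1p)}=2^{2l\gamma_\alpha(p,q)}$. When $\alpha<1$, a single interval has length $\gtrsim 1$, so $I$ is covered directly and $\gamma_*=\gamma_\alpha$.

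\emph{Main obstacle.} The hard step is Step 1 strictly inside the triangle $OA_{d/2}E_{d/2}$, rather than on the sharp line. There we must exploit the bounded time interval and the truncated kernel, and obtain a weak Lorentz-in-time estimate that is uniform in $h$ with the correct power of $h$. If the analytic-family argument fails to give this directly, the fallback is the following. First establish the sharp-line estimates on $(A_{d/2},E_{d/2})$ localized to $J$. Then use H\"older in time on $J$, losing $|J|^{2/p-2/\tilde p}$, to lower $1/p$ at fixed $q$. Finally check that the resulting $h$-power together with $\beta$ still gives exactly $\gamma_\alpha$ and $\beta_{d/2}(p,q)$, interpolating in $\beta$ via Lorentz $\ell^{\beta,1}$ and real interpolation.
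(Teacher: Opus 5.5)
\textbf{There is a genuine gap: the passage from the sharp segment into the interior of $OA_{d/2}E_{d/2}$ is not established.}

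Your treatment of the sharp segment $(A_{d/2},E_{d/2})$ matches the paper's. You use the Frank--Sabin analytic family on intervals of length $\approx 2^{(1-\alpha)l}$, then sum $\approx 2^{(\alpha-1)l}$ pieces, which turns $2^{\frac2p(2-\alpha)l}$ into $2^{2l/p}=2^{2l\gamma_\alpha(p,q)}$. You correctly flag the interior as the main obstacle, but neither of your two mechanisms handles it.

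\emph{The truncated-decay route gives the wrong $\beta$.} In the scheme you describe, the two endpoints are the $\mathfrak S^2$ bound, obtained from $L^\infty_{x,y}$ kernel bounds with $W\in L^a_tL^2_x$, and the $\mathfrak S^\infty$ bound, with $W\in L^\infty_{t,x}$. Interpolating between them always ties the Schatten exponent to the spatial exponent: $1/\beta'=\frac12-\frac1q$, i.e.\ $\beta=\frac{2q}{q+2}$, whatever time decay you feed in. Replacing $|\tau|^{-d/2}$ by $(1+|\tau|)^{-d/2}$ can free the time exponent and push you below the sharp line. However, it only does so with $\beta=\frac{2q}{q+2}$. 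Off the sharp line this is strictly smaller than the claimed $\beta_{d/2}(p,q)$, since there $\frac{1}{\beta_{d/2}(p,q)}=\frac{2}{dp}+\frac2q<\frac12+\frac1q$.

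\emph{The H\"older fallback goes the wrong way.} On a bounded interval, $\|F\|_{L^{p/2}(J)}\le |J|^{2/p-2/\tilde p}\|F\|_{L^{\tilde p/2}(J)}$ holds only when $p\le\tilde p$. So it raises $1/p$ and moves you above the sharp line, not into the triangle.

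\emph{The missing ingredient is the trivial endpoint at $O$.} Let $K_l(t,x,y)$ be the kernel of $e^{it\Delta^{\alpha/2}}P_l$. Bessel's inequality and the local Weyl law give
\[
\sum_j|e^{it\Delta^{\alpha/2}}P_lf_j(x)|^2\le\|K_l(t,x,\cdot)\|_{L^2(M)}^2\lesssim 2^{ld}.
\]
Hence the density is bounded in $L^\infty_{t,x}$ by $2^{ld}\|\nu\|_{\ell^\infty}$. Dually, $\|WTT^*\overline{W}\|_{\mathfrak S^1}\lesssim 2^{ld}\|W\|_{L^2_{t,x}}^2$, and this pair of exponents lies off the line where your two endpoints are tied together. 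You essentially use this argument for $l\le 0$ (the finite-dimensionality remark) but do not exploit it for all $l$.

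Note that $\gamma_\alpha(\infty,\infty)=d/2$, $1/\beta_{d/2}(p,q)$ is linear in $(1/q,1/p)$, and $\gamma_\alpha(p,q)$ is affine. Therefore complex interpolation along the ray from $O$ to a point of $(A_{d/2},E_{d/2}]$ gives exactly $2^{2l\gamma_\alpha(p,q)}$ and $\beta=\beta_{d/2}(p,q)$ at every interior point. This is how the paper concludes. No weak-type bound or real-interpolation upgrade is needed at this stage; that device is used later, for the globalization in $l$.
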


\begin{proof}
 
We first claim that the following endpoint estimate (at point $O$) holds for all orthonormal systems $(f_j)_j$ in $L^2(M)$:
    \begin{equation}\label{local3}
        \Big\|\sum_j  \nu_j |e^{it\Delta^{\alpha/2}}P_l f_j|^2\Big\|_{L_t^{\infty}L_x^{\infty}(I\times M)} \lesssim
        2^{ld}\|\nu\|_{\ell^\infty}.
    \end{equation}
To see this, let $(e_k)_k$ be an orthonormal eigenbasis in $L^2(M)$ associated with the eigenvalues $(\la_k)_k$ of $\sqrt{\Delta}$, where $0=\la_0 \leq \la_1 \leq \cdots$.
    Let $K_l(t,x,y)$ be the kernel of the operator $e^{it\Delta^{\alpha/2}}P_l$, namely
    \[
        K_l(t,x,y) = \sum_k e^{it\la_k^\alpha} \varphi(2^{-l}\la_k) e_k(x)\overline{e_k(y)}.
    \]
    Then by Bessel's inequality and the local Weyl estimate on compact manifolds, we have
  \[        \sum_j |e^{it\Delta^{\alpha/2}}P_l f_j(x)|^2 = \sum_j \Big|\int_M K_l(t,x,y) f_j(y) dy\Big|^2 \leq \|K_l(t,x,\cdot)\|_{L^2(M)}^2 \lesssim 2^{ld} .\]

    Based on \eqref{local3} and complex interpolation,  it remains only to prove \eqref{local2} on the interval $(A_{d/2}, E_{d/2}]$.
    Since the localization operator $P_l$ need not preserve orthogonality, the frequency-localized version of Theorem \ref{thm:sharp} cannot be applied directly.
    However, the Frank-Sabin duality principle remains valid in the presence of the localization operator $P_l$.
    Hence, we can obtain the estimate \eqref{local2} on the line segment $(A_{d/2}, E_{d/2}]$ using an argument similar to the proof of Theorem \ref{thm:sharp}.
When $\alpha>1$, we need to split the interval $I$ into subintervals $\left\{ I_{l,n}\right\}$ of length $2^{(1-\alpha)l}$. Then  
    \begin{align*}
        \Big\|\sum_j  \nu_j |e^{it\Delta^{\alpha/2}}P_l f_j|^2\Big\|_{L_t^{p/2}L_x^{q/2}(I\times M)}^{p/2} 
        &\leq \sum_n \int_{I_{l,n}} \Big\|\sum_j  \nu_j |e^{it\Delta^{\alpha/2}}P_l f_j|^2\Big\|_{L_x^{q/2}}^{p/2} dt \\
        &\lesssim 2^{l(\alpha-1)} \max_n \Big\|\sum_j  \nu_j |e^{it\Delta^{\alpha/2}}P_l f_j|^2\Big\|_{L_t^{p/2}L_x^{q/2}(I_{l,n}\times M)}^{p/2}.
    \end{align*}
    
    For any small $\eps>0$, we introduce a holomorphic family of operators $T^\eps_{z,l}$ in the strip \(\{z\in\mathbb{C}: -r/2\le \text{Re}z\le 0\}\), 
whose kernel is given by
    \[K^\eps_{z,l}(t,x,s,y)=\1_{|t|\le 2^{(1-\alpha)l}}\1_{|s|\le 2^{(1-\alpha)l}}\1_{\eps< |t-s|}(t-s)^{-1-z}\sum_{k} \varphi(2^{-l}\lambda_k)^2 e^{i (t-s)\la_k^\alpha}e_k(x)\overline{e_k(y)}.  \]
    By Lemma \ref{dua}, it suffices to estimate $\|WT^\eps_{-1,l}  \overline{W}\|_{\mathfrak S^{r}}$, where $r=\beta'$.
    We shall obtain this estimate by applying Stein's complex interpolation theorem to the above analytic family of operators. We consider the two boundaries of the strip.
Let $z_1=-\frac r2+ib$ with $b\in\mathbb{R}$. 
    Using the Hardy–Littlewood–Sobolev inequality together with the frequency-localized dispersive estimate from Lemma \ref{din}, we have
    \begin{align*}
        \|WT^\eps_{z_1,l}  \overline{W}\|_{\mathfrak S^{2}}\lesssim 2^{\frac{(2-\alpha)dl}{2}} \|W\|_{L_t^{\frac4{r-d}}L_x^2}^2.
    \end{align*} 
    The use of the Hardy--Littlewood--Sobolev inequality imposes the condition $r \in (d+1,d+2)$, which corresponds to the segment $(A_{d/2}, E_{d/2})$.
   Let $z_2=ib$ with $b\in\mathbb{R}$. 
    In this case, by the Plancherel theorem and the uniform boundedness of the truncated Hilbert transform, we have
    \begin{equation}\label{eq:infinitybound}
        \|WT^\eps_{z_2,l}  \overline{W}\|_{\mathfrak S^{\infty}}\lesssim  (1+|b|) \|W\|_{L^\infty_{t}L^\infty_{x}}^2.
    \end{equation}
    Interpolating between these two boundary estimates yields
    \begin{align*}
        \|WT^\eps_{-1,l} \overline{W}\|_{\mathfrak S^{r}}\lesssim   2^{\frac2p(2-\alpha)l} \|W\|_{L_t^{\frac{2r}{r-d}}L_x^{r}(M)}^2
    \end{align*}
    for all $W\in L_t^{\frac{2r}{r-d}}L_x^{r}(I\times M)$.
    Consequently, letting $\eps \to 0$, Lemma \ref{dua} implies
    \begin{equation}\label{local4}
        \Big\|\sum_j  \nu_j |e^{it\Delta^{\alpha/2}}P_l f_j|^2\Big\|_{L_t^{p/2}L_x^{q/2}(I_{l,n}\times M)} \lesssim 2^{\frac{2}{p}(2-\alpha)l} \|\nu\|_{\ell^\beta}.
    \end{equation}
    After summing over the short time intervals, we further get 
    \begin{equation}\label{local5}
        \Big\|\sum_j  \nu_j |e^{it\Delta^{\alpha/2}}P_l f_j|^2\Big\|_{L_t^{p/2}L_x^{q/2}(I\times M)} \lesssim 2^{2l/p}\|\nu\|_{\ell^\beta}.
    \end{equation}
    Finally, when $\alpha>1$, the desired estimate \eqref{local2} follows by complex interpolation between \eqref{local3} and \eqref{local5}. 
   When $0<\alpha<1$, no decomposition of the time interval is needed, and the same conclusion follows directly by interpolating between \eqref{local3} and \eqref{local4}.
\end{proof}

\begin{rem}
Interpolating with the trivial point $B$, Proposition \ref{prop:local} holds for the triangle $OA_{d/2}B$ excluding the side $(O,A_{d/2}]$.
\end{rem}

We now state the globalization lemma, which is due to \cite[Proposition 2.2]{bez2019strichartz}.
\begin{lem}\label{lem2}
    Let $p_0,p_1 > 2$, $q \geq 2$, $\beta_0,\beta_1 \geq 1$, and let $(g_j)_j$ be a sequence of functions such that for each $i=0,1$, it is uniformly bounded in $L^{p_i}_t L^{q}_x(I\times M)$. 
    Assume that for each $i$, there exists $\varepsilon_i >0$ such that for all $l \in \mathbb{Z}$ and $\nu\in \ell^{\beta_i}$,
    \[
        \Big\|\sum_j  \nu_j |P_l g_j|^2\Big\|_{L_t^{\frac{p_i}{2},\infty}L_x^{\frac{q}{2}}(I\times M)} \lesssim 
        2^{(-1)^{i+1}\varepsilon_i l}\|\nu\|_{\ell^{\beta_i}}
    \]
    where $P_l$ is the Littlewood-Paley projection onto frequencies of scale $2^l$.
    Then for all $\nu = (\nu_j)_j \in \ell^{\beta,1}$ we have 
    \[
        \Big\|\sum_j  \nu_j | g_j|^2\Big\|_{L_t^{\frac{p}{2},\infty}L_x^{\frac{q}{2}}(I\times M)} \lesssim 
        \|\nu\|_{\ell^{\beta,1}}, 
    \]
where 
    \[\frac{1}{p} = \frac{\theta}{p_0} + \frac{1-\theta}{p_1}, \quad \frac{1}{\beta} = \frac{\theta}{\beta_0} + \frac{1-\theta}{\beta_1}, \quad \theta = \frac{\varepsilon_1}{\varepsilon_0+\varepsilon_1}.\] 
\end{lem}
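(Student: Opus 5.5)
The plan is to adapt the argument of \cite[Proposition 2.2]{bez2019strichartz}, which is essentially abstract, to $I\times M$. We write $g_j=\sum_l P_l g_j$ up to a low-frequency remainder. That remainder is harmless: by the uniform $L^{p_i}_tL^q_x$ bounds it is controlled by finitely many pieces, and these can be absorbed into either hypothesis. By the atomic decomposition of $\ell^{\beta,1}$ it suffices to prove the claim for $\nu=\1_{F}$ with $F$ a finite index set, namely
\[
\Big\|\sum_{j\in F}|g_j|^2\Big\|_{L_t^{\frac p2,\infty}L_x^{\frac q2}}\lesssim (\#F)^{1/\beta}.
\]
Normalized characteristic functions are the atoms of $\ell^{\beta,1}$. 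The general case then follows because $L_t^{p/2,\infty}L_x^{q/2}$ is normable (since $p/2>1$), and the constant is uniform. One can also treat complex $\nu$ by splitting into signs and real and imaginary parts.

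Next, for each level $\lambda>0$ we must bound $|\{t\in I:\|\sum_{j\in F}|g_j(t)|^2\|_{L^{q/2}_x}>\lambda\}|$. We choose a threshold $l_0$ and split $g_j=\sum_{l\le l_0}P_lg_j+\sum_{l>l_0}P_lg_j$. By Cauchy--Schwarz with weights $2^{\mp\delta (l-l_0)}$ for a small $\delta>0$,
\[
\sum_{j}\Big|\sum_{l\le l_0}P_lg_j\Big|^2\lesssim_\delta\sum_{l\le l_0}2^{\delta(l_0-l)}\sum_j|P_lg_j|^2,
\]
and similarly for $l>l_0$ with the weight $2^{\delta(l-l_0)}$. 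Then, using quasi-triangle inequalities in the weak space $L^{p_i/2,\infty}_t L^{q/2}_x$, which is normable because $p_i>2$, together with the hypotheses, we get
\[
\Big\|\sum_j|g_j^{\le l_0}|^2\Big\|_{L_t^{\frac{p_1}2,\infty}L_x^{\frac q2}}\lesssim 2^{\varepsilon_1 l_0}(\#F)^{1/\beta_1},
\]
provided $\delta<\varepsilon_1$ so that the geometric sum converges, and analogously
\[
\Big\|\sum_j|g_j^{>l_0}|^2\Big\|_{L_t^{\frac{p_0}2,\infty}L_x^{\frac q2}}\lesssim 2^{-\varepsilon_0 l_0}(\#F)^{1/\beta_0}.
\]
Chebyshev then bounds the measure of the level set by
\[
\Big(\lambda^{-1}2^{\varepsilon_1 l_0}(\#F)^{1/\beta_1}\Big)^{p_1/2}+\Big(\lambda^{-1}2^{-\varepsilon_0 l_0}(\#F)^{1/\beta_0}\Big)^{p_0/2}.
\]

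The remaining step is to optimize $l_0=l_0(\lambda,\#F)$, choosing it so that the two terms balance. A computation of Bourgain-trick type shows that this yields $\lambda^{-p/2}(\#F)^{p/(2\beta)}$ exactly when $\theta=\varepsilon_1/(\varepsilon_0+\varepsilon_1)$ and $p,\beta$ are defined as in the statement. This exponent bookkeeping is routine but is where the specific convex combination comes from. I expect the main obstacle to be justifying the summation in the weak space $L^{p_i/2,\infty}_t$, since the triangle inequality holds only after renorming; this is exactly why we need $p_i>2$. A secondary issue is making the Littlewood--Paley decomposition on $M$ rigorous, since the $P_l$ do not preserve orthogonality. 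Orthonormality is, however, never used here: the hypotheses are stated directly for $P_lg_j$, and the uniform $L^{p_i}_tL^q_x$ boundedness only guarantees that $g_j=\sum_lP_lg_j$ converges, so that the quantities involved are finite.
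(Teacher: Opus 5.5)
Your main argument is correct. The steps are: reduce to atoms $\nu=\1_F$, split at a level-dependent $l_0$, control the cross terms pointwise by weighted Cauchy--Schwarz, sum using normability of the weak spaces, and balance the two Chebyshev terms. I checked the balancing, and it gives $\lambda^{-p/2}(\#F)^{p/(2\beta)}$ with exactly $\theta=\varepsilon_1/(\varepsilon_0+\varepsilon_1)$ weighting $1/p_0$. One small fix: the high-frequency sum needs $\delta<\varepsilon_0$ as well, so take $\delta<\min(\varepsilon_0,\varepsilon_1)$. This is in substance the summation-trick argument behind \cite[Proposition 2.2]{bez2019strichartz}, which the paper cites rather than proves.

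The one step that fails as written is your treatment of the low-frequency remainder $R g_j$. The uniform $L^{p_i}_tL^q_x$ bounds only give
\[
\Big\|\sum_{j\in F}|Rg_j|^2\Big\|_{L_t^{\frac p2}L_x^{\frac q2}}\le\sum_{j\in F}\|Rg_j\|_{L^p_tL^q_x}^2\lesssim \#F .
\]
This is worse than $(\#F)^{1/\beta}$ once $\beta>1$. A piece that is not one of the $P_l$ is also not covered by either hypothesis, so it cannot be ``absorbed'' into them. The problem is real, not cosmetic. Take $P_l=\varphi(2^{-l}\sqrt{\Delta})$ with $\varphi\in C_0^\infty(\mathbb R\setminus\{0\})$, as in Proposition~\ref{prop:local}, and $g_j\equiv 1$. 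Then every hypothesis holds trivially, since $P_l1=0$, yet $\|\sum_{j\in F}|g_j|^2\|\approx \#F$.

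There are two ways to repair this:
\begin{itemize}
\item Read the lemma with a Littlewood--Paley family that resolves the identity, so the lowest piece, containing the zero mode, is one of the $P_l$ and satisfies the hypothesis. Then there is no remainder at all.
\item In the application (Proposition~\ref{prop:global-weak}), treat the constant mode separately using orthonormality. With $\Pi_0$ the projection onto constants, Bessel's inequality gives $\sum_j\nu_j|\Pi_0e^{it\Delta^{\alpha/2}}\langle D\rangle^{-s}f_j|^2\le \mathrm{vol}(M)^{-1}\|\nu\|_{\ell^\infty}$. Then combine with $|g_j|^2\le 2|\Pi_0 g_j|^2+2|(I-\Pi_0)g_j|^2$.
\end{itemize}
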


\begin{prop}\label{prop:global-weak}
    Suppose that $(\frac{1}{q},\frac{1}{p})$ lies in the interior of the triangle $O A_{d/2} E_{d/2}$.
    If $s = \gamma_{\alpha}(p,q)$ and $\beta = \beta_{d/2}(p,q)$, then for all orthonormal systems $(f_j)_j$ in $H^s(M)$ and all sequences $\nu = (\nu_j)_j \in \ell^{\beta,1}$, we have
    \[
        \Big\|\sum_j  \nu_j |e^{it\Delta^{\alpha/2}} f_j|^2\Big\|_{L_t^{\frac{p}{2},\infty}L_x^{\frac{q}{2}}(I\times M)} \lesssim 
        \|\nu\|_{\ell^{\beta,1}}.
    \]
\end{prop}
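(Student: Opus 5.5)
We will apply the globalization Lemma~\ref{lem2} to $g_j=e^{it\Delta^{\alpha/2}}f_j$, taking two points on the line through $(\frac1q,\frac1p)$ with fixed $q$, one on each side. Write $s=\gamma_\alpha(p,q)$ and $h_j=\langle D\rangle^{s}f_j$, so that $(h_j)_j$ is orthonormal in $L^2(M)$. Since $P_l$ commutes with $\langle D\rangle^{-s}$ and $e^{it\Delta^{\alpha/2}}$, we have
\[
P_l g_j=e^{it\Delta^{\alpha/2}}\tilde P_l\, h_j, \qquad \tilde P_l=\varphi(2^{-l}\sqrt\Delta)\langle D\rangle^{-s},
\]
and $\tilde P_l=2^{-ls}\tilde\varphi_l(2^{-l}\sqrt\Delta)$, where $\tilde\varphi_l(\lambda)=\varphi(\lambda)(2^{-2l}+\lambda^2)^{-s/2}$ is a family bounded in $C_0^\infty(\mathbb R\setminus\{0\})$ uniformly in $l\ge0$. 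Only $l\ge 0$ matters on a compact manifold: the low-frequency part is a finite-rank smoothing piece, and we absorb it into $l=0$. Proposition~\ref{prop:local} holds with constants depending only on finitely many seminorms of $\varphi$, so it applies uniformly to $\tilde\varphi_l$.

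Fix $(\frac1q,\frac1p)$ in the interior of $OA_{d/2}E_{d/2}$ and choose $p_0<p<p_1$ with $p_0,p_1>2$ such that both $(\frac1q,\frac1{p_i})$ remain in that open triangle. This is possible because the triangle is open and the vertical line through the point meets it in an open interval. Proposition~\ref{prop:local}, together with the embedding $L^{p_i/2}\subset L^{p_i/2,\infty}$, gives
\[
\Big\|\sum_j\nu_j|P_lg_j|^2\Big\|_{L_t^{p_i/2,\infty}L_x^{q/2}}\lesssim 2^{2l(\gamma_\alpha(p_i,q)-\gamma_\alpha(p,q))}\|\nu\|_{\ell^{\beta_i}}, \qquad \beta_i=\beta_{d/2}(p_i,q).
\]
Since $\gamma_\alpha(p,q)$ is affine and strictly decreasing in $\frac1p$, with slope $-1$ or $-\alpha$, the exponent is $2\cdot c\,(\frac1p-\frac1{p_i})$ with $c\in\{1,\alpha\}$. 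It is negative for $i=0$ and positive for $i=1$, which matches the sign convention $2^{(-1)^{i+1}\varepsilon_i l}$ with $\varepsilon_0=2c(\frac1{p_0}-\frac1p)$ and $\varepsilon_1=2c(\frac1p-\frac1{p_1})$.

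Next we check the interpolation parameters. Setting $\theta=\varepsilon_1/(\varepsilon_0+\varepsilon_1)$ gives $\frac1p=\frac\theta{p_0}+\frac{1-\theta}{p_1}$. Because $\frac1\beta=\frac{2}{dp}+\frac2q$ is affine in $\frac1p$ for fixed $q$, the same $\theta$ also gives $\frac1\beta=\frac\theta{\beta_0}+\frac{1-\theta}{\beta_1}$. The uniform bound $\sup_j\|g_j\|_{L^{p_i}_tL^q_x}<\infty$ follows from Theorem~\ref{sig0} and the embedding $H^{\gamma_\alpha(p,q)}\subset H^{\gamma_\alpha(p_i,q)}$ when $\gamma_\alpha(p_i,q)\le \gamma_\alpha(p,q)$, i.e.\ for $i=0$. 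For $i=1$ this embedding fails, and we must check what Lemma~\ref{lem2} really uses. I expect it only needs the a priori finiteness that justifies the decomposition $g_j=\sum_l P_lg_j$ and the manipulations in the sums. For that purpose we may first take finite systems with smooth $f_j$, since a smooth function lies in every $H^\sigma$, and then pass to the limit by density and Fatou.

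Lemma~\ref{lem2} then yields the stated weak-type bound with $\|\nu\|_{\ell^{\beta,1}}$. The main obstacle is the $i=1$ hypothesis just described: we must confirm that the uniform boundedness in $L^{p_1}_tL^q_x$ is used only qualitatively. The second delicate point is the uniformity of Proposition~\ref{prop:local} over the $l$-dependent family of cutoffs $\tilde\varphi_l$. For this we need Lemma~\ref{din} with constants uniform over bounded subsets of $C_0^\infty$, which its semiclassical parametrix proof provides.
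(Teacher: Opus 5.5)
Your proposal is correct and follows the paper's proof: Lemma~\ref{lem2} is applied on the vertical segment through $(\frac1q,\frac1p)$, with Proposition~\ref{prop:local} supplying the frequency-localized bounds (the paper takes $\frac1{p_0}=\frac1p+\delta$ and $\frac1{p_1}=\frac1p-\delta$, so $\varepsilon_0=\varepsilon_1$ and $\theta=\frac12$). The two issues you flag are ones the paper passes over. First, it asserts that Theorem~\ref{sig0} gives the uniform $L^{p_i}_tL^q_x$ bounds for both $i$, but that theorem only delivers them for $i=0$, so your reduction to finite smooth systems followed by a Fatou limit is the right fix. Second, the factor $\langle D\rangle^{-s}$ can also be absorbed without any uniformity in the cutoff: write $P_l\langle D\rangle^{-s}=P_lB$ with $\|B\|_{L^2\to L^2}\lesssim 2^{-ls}$, and apply Proposition~\ref{prop:local} to the singular vectors of $B\bigl(\sum_j\nu_j|h_j\rangle\langle h_j|\bigr)B^*$, whose $\mathfrak S^\beta$ norm is at most $\|B\|^2\|\nu\|_{\ell^\beta}$.
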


\begin{proof}
    We choose $\delta>0$ sufficiently small so that $\frac{1}{p_0}=\frac{1}{p}+\delta$, $\frac{1}{p_1}=\frac{1}{p}-\delta$, and both $(\frac{1}{q},\frac{1}{p_0})$ and $(\frac{1}{q},\frac{1}{p_1})$ lie in the interior of the triangle $O A_{d/2} E_{d/2}$.
    For $i=0,1$, define $\varepsilon_i = (-1)^i (2s - 2\gamma_{\alpha}(p_i,q))$. Then, by Proposition \ref{prop:local}, we obtain
    \[
        \Big\|\sum_j  \nu_j |e^{it\Delta^{\alpha/2}} P_l \langle D \rangle^{-s} f_j|^2\Big\|_{L_t^{\frac{p_i}{2},\infty}L_x^{\frac{q}{2}}(I\times M)} \lesssim 
        2^{(-1)^{i+1}\varepsilon_i l}\|\nu\|_{\ell^{\beta_i}}
    \]
    for all orthonormal systems $(f_j)_j$ in $L^2(M)$ and all sequences $\nu = (\nu_j)_j \in \ell^{\beta_i}$ with $\beta_i = \beta_{d/2}(p_i,q)$.
    On the other hand, Theorem \ref{sig0} provides the required uniform boundedness of the sequence of functions $(e^{it\Delta^{\alpha/2}} \langle D \rangle^{-s} f_j)_j$.
    Note that when $s = \gamma_{\alpha}(p,q)$, we have $\varepsilon_0 = \varepsilon_1 > 0$. Thus Lemma \ref{lem2} yields
    \[
        \Big\|\sum_j  \nu_j |e^{it\Delta^{\alpha/2}} \langle D \rangle^{-s} f_j|^2\Big\|_{L_t^{\frac{p}{2},\infty}L_x^{\frac{q}{2}}(I\times M)} \lesssim 
        \|\nu\|_{\ell^{\beta,1}}
    \]
    for all orthonormal systems $(f_j)_j$ in $L^2(M)$ and all sequences $\nu = (\nu_j)_j \in \ell^{\beta,1}$ with $\beta = \beta_{d/2}(p,q)$.
\end{proof}

\begin{rem}
Proposition \ref{prop:global-weak} also holds for the interior of the triangle $OA_{d/2}B$. On the sharp line boundary $(A_{d/2},B)$, a loss of regularity is inevitable for the global estimate, since Lemma \ref{lem2} cannot apply to the oblique boundary.
\end{rem}

\begin{proof}[\textbf{Proof of Theorem \ref{thm:schodinger}.}]
For any fixed $(\frac{1}{q},\frac{1}{p})$ lying in the interior of the triangle $O A_{d/2} E_{d/2}$,
choose two points $(\frac{1}{q},\frac{1}{p_0})$ and $(\frac{1}{q},\frac{1}{p_1})$ also lying in the interior of the triangle $O A_{d/2} E_{d/2}$ such that $\frac{1}{p}=\frac{\theta}{p_0}+\frac{1-\theta}{p_1}$, $\frac{1}{q}=\frac{\theta}{q_0}+\frac{1-\theta}{q_1}$ for some $\theta\in(0,1)$.
From Proposition \ref{prop:global-weak}, for $i=0,1$, we have
\[
    \Big\|\sum_j  \nu_j |e^{it\Delta^{\alpha/2}} f_j|^2\Big\|_{L_t^{\frac{p_i}{2},\infty}L_x^{\frac{q_i}{2}}(I\times M)} \lesssim 
    \|\nu\|_{\ell^{\beta_i,1}}
\]
for all orthonormal systems $(f_j)_j$ in $H^{s_i}(M)$ with $s_i=\gamma_{\alpha}(p_i,q_i)$, and all sequences $\nu=(\nu_j)_j\in \ell^{\beta_i,1}$ with $\beta_i=\beta_{d/2}(p_i,q_i)$.
Moreover, real interpolation yields 
\[(L^{\frac{p_0}{2},\infty}_t L^{\frac{q_0}{2}}_x, L^{\frac{p_1}{2},\infty}_t L^{\frac{q_1}{2}}_x)_{\theta,\frac{p}{2}} = L^{\frac{p}{2}}_t L^{\frac{q}{2},\frac{p}{2}}_x,
\qquad
(\ell^{\beta_0,1}, \ell^{\beta_1,1})_{\theta,\frac{p}{2}} = \ell^{\beta_{d/2}(p,q),\frac{p}{2}}.\]
Note that $(\frac{1}{q},\frac{1}{p})$ lies in the interior of the triangle $O A_{d/2} E_{d/2}$,
hence $p<q$ and $\beta_{d/2}(p,q)<\frac{p}{2}$, 
and thus 
\[L^{\frac{p}{2}}_t L^{\frac{q}{2},\frac{p}{2}}_x \subset L^{\frac{p}{2}}_t L^{\frac{q}{2}}_x, \qquad \ell^{\beta_{d/2}(p,q)}\subset \ell^{\beta_{d/2}(p,q),\frac{p}{2}}.\]
Thus we obtain the estimate
\begin{equation}\label{local6}
    \Big\|\sum_j  \nu_j |e^{it\Delta^{\alpha/2}} f_j|^2\Big\|_{L_t^{\frac{p}{2}}L_x^{\frac{q}{2}}(I\times M)} \lesssim 
    \|\nu\|_{\ell^{\beta}}
\end{equation}
which holds for all orthonormal systems $(f_j)_j$ in $H^s(M)$ with $s=\gamma_{\alpha}(p,q)$, and all sequences $\nu=(\nu_j)_j\in \ell^{\beta}$ with $\beta=\beta_{d/2}(p,q)$.
Interpolating between the estimates \eqref{local6} for points lying in the interior of the triangle $O A_{d/2} E_{d/2}$ 
and the trivial estimate at the endpoint $B$, which follows from Theorem \ref{sig0} and Minkowski's inequality, we obtain that \eqref{local6} holds for all points in the interior of $O A_{d/2} B$ and all orthonormal systems $(f_j)_j$ in $H^s(M)$ with $s \ge \gamma_{\alpha}(p,q)$.

Finally, interpolating between the points on the line segment $(C_{d/2}, D)$ 
and the points inside the triangle $O A_{d/2} B$ that approach the line segment $(O, A_{d/2})$ arbitrarily closely, 
we complete the proof of Theorem \ref{thm:schodinger}.
\end{proof}

\subsection{Wave and Klein-Gordon case}
We have a similar proof for the wave and Klein-Gordon propagators.  

\begin{prop}\label{localw1}
    Let $d \geq 2$, $m \geq 0$. 
 Then for all $\frac{d-1}{2}$-admissible pairs $(p,q)$ such that $(\frac{1}{q},\frac{1}{p})$ lies in the interior of the triangle $O A_{\frac{d-1}{2}} E_{\frac{d-1}{2}}$ and for all $l\in\mathbb Z$, we have
    \begin{equation}\label{localw2}
        \Big\|\sum_j  \nu_j |e^{it\sqrt{\Delta + m^2}}P_l f_j|^2\Big\|_{L_t^{p/2}L_x^{q/2}(I\times M)} \lesssim
        2^{2l\gamma(p,q)}\|\nu\|_{\ell^\beta}
    \end{equation}
 for all orthonormal systems $(f_j)_j$ in $L^2(M)$ and all sequences $\nu=(\nu_j)_j\in \ell^\beta$ with $\beta=\beta_{\frac{d-1}{2}}(p,q)$.
\end{prop}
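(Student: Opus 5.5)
We follow the proof of Proposition \ref{prop:local} line by line, replacing Lemma \ref{din} by the wave dispersive estimate of Lemma \ref{dislem} and $\sigma=d/2$ by $\sigma=(d-1)/2$. The first step is the endpoint estimate at $O$:
\[
\Big\|\sum_j \nu_j |e^{it\sqrt{\Delta+m^2}}P_l f_j|^2\Big\|_{L^\infty_tL^\infty_x(I\times M)}\lesssim 2^{ld}\|\nu\|_{\ell^\infty}.
\]
As before, we write the kernel $K_l(t,x,y)=\sum_k e^{it\sqrt{\la_k^2+m^2}}\varphi(2^{-l}\la_k)e_k(x)\overline{e_k(y)}$. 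Bessel's inequality bounds $\sum_j|e^{it\sqrt{\Delta+m^2}}P_lf_j(x)|^2$ by $\|K_l(t,x,\cdot)\|_{L^2}^2=\sum_k|\varphi(2^{-l}\la_k)|^2|e_k(x)|^2$. The local Weyl law bounds this by $2^{ld}$. Note that $\gamma(\infty,\infty)=d/2$, so this matches \eqref{localw2} at $O$.

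The second step is the estimate on the sharp segment $(A_{\frac{d-1}{2}},E_{\frac{d-1}{2}}]$, which we derive via the Frank--Sabin duality principle of Lemma \ref{dua}. Lemma \ref{dislem} is valid on a fixed time interval $|t|\le t_0$, with no $h$-dependent shrinking. So, in contrast with the case $\alpha>1$, we only split $I$ into $O(1)$ subintervals of length $t_0$, which costs only a constant.

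On each subinterval we introduce the analytic family
\[
K^\eps_{z,l}(t,x,s,y)=\1_{\eps<|t-s|}(t-s)^{-1-z}\sum_k\varphi(2^{-l}\la_k)^2e^{i(t-s)\sqrt{\la_k^2+m^2}}e_k(x)\overline{e_k(y)},
\]
with time cutoffs $|t|,|s|\le t_0$, on the strip $-r/2\le\mathrm{Re}\,z\le 0$, where $r=\beta'$.

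\emph{Line $\mathrm{Re}\,z=0$.} Plancherel and the uniform boundedness of the truncated Hilbert transform give an $\mathfrak S^\infty$ bound as in \eqref{eq:infinitybound}.

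\emph{Line $\mathrm{Re}\,z=-r/2$.} We need the Hilbert--Schmidt bound. By Lemma \ref{dislem} the kernel is bounded by $2^{ld}(1+2^l|t-s|)^{-(d-1)/2}|t-s|^{r/2-1}$. We use $(1+2^l|\tau|)^{-(d-1)/2}\le 2^{-l(d-1)/2}|\tau|^{-(d-1)/2}$, so that
\[
|K|\lesssim 2^{l(d+1)/2}|t-s|^{(r-d-1)/2}.
\]
The $\mathfrak S^2$ norm squared is then controlled by Hardy--Littlewood--Sobolev in time with kernel $|t-s|^{-(d+1-r)}$. This requires $0<d+1-r<1$, i.e.\ $r\in(d,d+1)$, which corresponds exactly to the open segment $(A_{\frac{d-1}{2}},E_{\frac{d-1}{2}})$ for $\sigma=(d-1)/2$. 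The resulting bound is
\[
\|WT_{z,l}\overline W\|_{\mathfrak S^2}\lesssim 2^{l(d+1)/2}\|W\|^2_{L_t^{4/(r-d+1)}L_x^2}.
\]

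Stein interpolation at $z=-1$ gives an $\mathfrak S^r$ bound with the factor $2^{l(d+1)/r}$. We then let $\eps\to0$ and apply Lemma \ref{dua}. On the sharp line one has $2/r=2/\beta'=$ the value making $(d+1)/r=2\gamma(p,q)=\frac{2}{p}\frac{d+1}{d-1}$. This identity has to be checked from $\frac{\sigma}{\beta}=\frac1p+\frac{2\sigma}{q}$ and the sharp condition. The outcome is \eqref{localw2} on $(A_{\frac{d-1}{2}},E_{\frac{d-1}{2}})$, and the endpoint $E$ follows by a limiting or interpolation argument.

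The final step is to interpolate complexly between the segment estimate and \eqref{local3}'s analogue at $O$. Both $\gamma(p,q)$ and $1/\beta$ are affine in $(1/q,1/p)$, so the factor $2^{2l\gamma(p,q)}$ and the exponent $\beta=\beta_{\frac{d-1}{2}}(p,q)$ interpolate correctly. This gives the result in the interior of triangle $OA_{\frac{d-1}{2}}E_{\frac{d-1}{2}}$.

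The main obstacle is the exponent bookkeeping in the $\mathfrak S^2$ step: we must confirm that the HLS admissibility range matches the open segment and that the power of $2^l$ equals $2\gamma(p,q)$. We also need to handle low frequencies $l\le 0$, where the bound is trivial with $O(1)$ constants. Finally, the extension to $m>0$ is already covered by Lemma \ref{dislem}.
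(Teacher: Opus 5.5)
Your proposal is correct and follows the paper's proof. It uses the same steps: the $L^\infty_tL^\infty_x$ endpoint at $O$ from Bessel's inequality and the local Weyl law, then Frank--Sabin duality with Stein interpolation between the $\mathfrak S^2$ bound (Lemma \ref{dislem} plus HLS, valid for $r\in(d,d+1)$, i.e.\ the open segment $(A_{\frac{d-1}{2}},E_{\frac{d-1}{2}})$) and the $\mathfrak S^\infty$ bound, and finally complex interpolation with the $O$ endpoint. The identity you flagged does hold, since on the sharp line $1/p=(d-1)/(2r)$ gives $(d+1)/r=2\gamma(p,q)$; and your splitting of $I$ into $O(1)$ subintervals supplies a detail the paper leaves implicit, provided you take them of length at most $t_0$ (rather than cutoffs $|t|,|s|\le t_0$), so that $|t-s|\le t_0$ and Lemma \ref{dislem} applies.
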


\begin{proof}
    The proof follows the same lines as that of Proposition \ref{prop:local}.
    The only change is that Lemma~\ref{din} is replaced by Lemma~\ref{dislem}, the  dispersive estimate for the wave/Klein--Gordon equations.

    The local Weyl estimate implies
    the $\infty$-endpoint estimate
    \begin{equation}\label{localw3}
        \Big\|\sum_j  \nu_j |e^{it\sqrt{\Delta + m^2}}P_l f_j|^2\Big\|_{L_t^{\infty}L_x^{\infty}(I\times M)} \lesssim
        2^{ld}\|\nu\|_{\ell^\infty}.
    \end{equation}

    Next, we consider the segment $(A_{\frac{d-1}{2}}, E_{\frac{d-1}{2}}]$.
    For any small $\eps>0$, we introduce a holomorphic family of operators $T^\varepsilon_{z,l}$ in the strip $\{z\in\mathbb{C}: -r/2\le \operatorname{Re}z\le 0\}$ whose kernel is given by
    \[K^\eps_{z,l}(t,x,s,y)=\1_{\eps< |t-s|}(t-s)^{-1-z}\sum_{k} \varphi(2^{-l}\lambda_k)^2 e^{i (t-s)\sqrt{\lambda_k^2+m^2}}e_k(x)\overline{e_k(y)}.  \]
 We apply Lemma \ref{dua} and Stein's complex interpolation.  Set $r=\beta'$ and let $z_1=-\frac r2+ib$, $z_2=ib$ with $b\in\mathbb{R}$. 
Using Lemma \ref{dislem} together with the Hardy–Littlewood–Sobolev inequality, we have
\[\|WT^\eps_{z_1,l}  \overline{W}\|_{\mathfrak S^{2}}\lesssim 2^{\frac{(d+1)l}{2}} \|W\|_{L_t^{\frac4{r-d+1}}L_x^2}^2.\]
Interpolating with the same $\mathfrak S^\infty$-bound as \eqref{eq:infinitybound} yields
\[\|WT^\eps_{-1,l} \overline{W}\|_{\mathfrak S^{r}}\lesssim   2^{\frac2p\frac{d+1}{d-1}l} \|W\|_{L_t^{\frac{2r}{r-d+1}}L_x^{r}(M)}^2
\]
for all $W\in L_t^{\frac{2r}{r-d+1}}L_x^{r}(I\times M)$.
    Consequently, letting $\varepsilon \to 0$, Lemma \ref{dua} implies
    \begin{equation}\label{localw4}
        \Big\|\sum_j  \nu_j |e^{it\sqrt{\Delta + m^2}}P_l f_j|^2\Big\|_{L_t^{p/2}L_x^{q/2}(I_{l,n}\times M)} \lesssim 2^{\frac2p\frac{d+1}{d-1}l} \|\nu\|_{\ell^\beta}.
    \end{equation}
  Then \eqref{localw2} follows by complex interpolation between \eqref{localw3} and \eqref{localw4}. 
\end{proof}

\begin{prop}\label{prop01}
    Suppose that $(\frac{1}{q},\frac{1}{p})$ lies in the interior of the triangle $O A_{\frac{d-1}{2}} E_{\frac{d-1}{2}}$.
    If $s= \gamma(p,q)$ and $\beta=\beta_{\frac{d-1}{2}}(p,q)$, then for all orthonormal systems $(f_j)_j$ in $H^s(M)$ and all sequences $\nu=(\nu_j)_j\in \ell^{\beta,1}$ we have
    \begin{equation}\label{eq:BH1}
        \Big\|\sum_j  \nu_j |e^{it\sqrt{\Delta + m^2}} f_j|^2\Big\|_{L_t^{\frac{p}{2},\infty}L_x^{\frac{q}{2}}(I\times M)} \lesssim 
        \|\nu\|_{\ell^{\beta,1}}.
    \end{equation}
\end{prop}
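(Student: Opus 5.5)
This is the wave/Klein--Gordon analogue of Proposition \ref{prop:global-weak}, and I would prove it the same way: feed the frequency-localized bounds of Proposition \ref{localw1} into the globalization Lemma \ref{lem2}. First, normalize. Write $f_j=\langle D\rangle^{-s}g_j$ with $(g_j)_j$ orthonormal in $L^2(M)$ and $s=\gamma(p,q)$. Then \eqref{eq:BH1} becomes a weak-type bound for the family
\[
G_j:=e^{it\sqrt{\Delta+m^2}}\langle D\rangle^{-s}g_j .
\]

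Next, fix $q$ and perturb only the time exponent. Choose $\delta>0$ small and set $\frac1{p_0}=\frac1p+\delta$, $\frac1{p_1}=\frac1p-\delta$. Take $\delta$ small enough that both $(\frac1q,\frac1{p_i})$ stay in the open triangle $OA_{\frac{d-1}{2}}E_{\frac{d-1}{2}}$; this is possible because the point is interior. Let $\beta_i=\beta_{\frac{d-1}{2}}(p_i,q)$.

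Apply Proposition \ref{localw1} to the orthonormal system $(g_j)_j$. Since $P_l$ commutes with $\langle D\rangle^{-s}$, and $\langle D\rangle^{-s}$ acts on the spectral support of $P_l$ like $2^{-ls}$ (absorb it into a modified cutoff $\tilde\varphi$), we get
\[
\Big\|\sum_j\nu_j|P_lG_j|^2\Big\|_{L_t^{p_i/2}L_x^{q/2}}\lesssim 2^{2l(\gamma(p_i,q)-s)}\|\nu\|_{\ell^{\beta_i}} .
\]
The $L_t^{p_i/2}$ norm dominates the $L_t^{p_i/2,\infty}$ norm, so the same bound holds in the weak space. Because $\gamma(p_i,q)-s=\frac1p-\frac1{p_i}=\mp\delta$, this reads $2^{(-1)^{i+1}\varepsilon_i l}$ with $\varepsilon_0=\varepsilon_1=2\delta>0$. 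The choice $\theta=\tfrac12$ then returns exactly $\frac1p$ and $\frac1\beta=\frac12(\frac1{\beta_0}+\frac1{\beta_1})=\frac{1}{\beta_{\frac{d-1}{2}}(p,q)}$, since $1/\beta_\sigma$ is affine in $1/p$.

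Lemma \ref{lem2} also needs the a priori hypothesis that $(G_j)_j$ is uniformly bounded in $L_t^{p_i}L_x^q(I\times M)$. Theorem \ref{sig1} gives this, since $(p_i,q)$ is $\frac{d-1}{2}$-admissible and
\[
\gamma(p_i,q)\le s \quad\text{for } i=0 ,
\]
with $\|g_j\|_{L^2}=1$. For $i=1$, however, $\gamma(p_1,q)>s$, so Theorem \ref{sig1} does not cover that case directly.

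\textbf{Main obstacle.} I see two ways to handle the $i=1$ bound.
\begin{itemize}
\item Lemma \ref{lem2} only needs qualitative uniform boundedness to justify summing the Littlewood--Paley pieces. Each $g_j$ is a fixed $L^2$ function, so one can first prove the estimate for finite frequency truncations and pass to the limit by monotone convergence. Then the uniform bound at exponent $p_1$ holds trivially for band-limited data.
\item Alternatively, use the fact that $I$ is bounded, so $L_t^{p_1}\supset L_t^{\tilde p}$ for $\tilde p\ge p_1$ along admissible lines, and pick an exponent at which Theorem \ref{sig1} applies with regularity $\le s$.
\end{itemize}
I would take the first route and check that the constant produced by Lemma \ref{lem2} depends only on the dyadic bounds. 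With that settled, Lemma \ref{lem2} yields \eqref{eq:BH1} for all $\nu\in\ell^{\beta,1}$.
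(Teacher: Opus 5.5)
Your proposal is correct and follows the paper's proof. It uses the same perturbation $\frac{1}{p_i}=\frac{1}{p}\pm\delta$, the dyadic bounds of Proposition \ref{localw1} with $\varepsilon_0=\varepsilon_1=2\delta$ and $\theta=\frac12$, and Lemma \ref{lem2}.

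Your caution at $i=1$ is well founded. The paper invokes Theorem \ref{sig1} for both $i$, but $\gamma(p_1,q)=s+\delta$, and the uniform $L^{p_1}_tL^q_x$ bound genuinely fails for general orthonormal systems. For the same reason your second route cannot work, since $\gamma(\tilde p,q)$ increases with $\tilde p$.

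The truncation route is the right repair, with two adjustments. The family $(\Pi_N g_j)_j$, with $\Pi_N$ the spectral projector onto $\{\lambda_k\le N\}$, is no longer orthonormal. So apply Proposition \ref{localw1} in density-operator form, via $\|\Pi_N\gamma\Pi_N\|_{\mathfrak S^{\beta_i}}\le\|\gamma\|_{\mathfrak S^{\beta_i}}=\|\nu\|_{\ell^{\beta_i}}$ with $\gamma=\sum_j\nu_j|g_j\rangle\langle g_j|$. Then let $N\to\infty$ using Fatou's lemma, since the truncations are not monotone in $N$ and monotone convergence does not apply.
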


\begin{proof}
    We choose $\delta>0$ sufficiently small so that $\frac{1}{p_0}=\frac{1}{p}+\delta$, $\frac{1}{p_1}=\frac{1}{p}-\delta$, and both $(\frac{1}{q},\frac{1}{p_0})$ and $(\frac{1}{q},\frac{1}{p_1})$ lie in the interior of the triangle $OA_{\frac{d-1}{2}}E_{\frac{d-1}{2}}$.
    For $i=0,1$, define $\varepsilon_i = (-1)^i (2s-2\gamma(p_i,q))$. Hence, by Proposition \ref{localw1}, for all orthonormal systems $(f_j)_j$ in $L^2(M)$ and all sequences $\nu=(\nu_j)_j\in \ell^{\beta_i}$ with $\beta_i=\beta_{\frac{d-1}{2}}(p_i,q)$, we have
    \[
        \Big\|\sum_j \nu_j |e^{it\sqrt{\Delta + m^2}} P_l \langle D \rangle^{-s} f_j|^2\Big\|_{L_t^{\frac{p_i}{2},\infty} L_x^{\frac{q}{2}}(I\times M)} \lesssim 
        2^{(-1)^{i+1}\varepsilon_i l}\|\nu\|_{\ell^{\beta_i}}.
    \]
    On the other hand, Theorem \ref{sig1} provides the required uniform boundedness for the function sequence $(e^{it\sqrt{\Delta + m^2}} \langle D \rangle^{-s} f_j)_j$.
    Note that when $s=\gamma(p,q)$, we have $\varepsilon_0 = \varepsilon_1 > 0$. Thus Lemma \ref{lem2} yields, for all orthonormal systems $(f_j)_j$ in $L^2(M)$ and all sequences $\nu=(\nu_j)_j\in \ell^{\beta,1}$ with $\beta=\beta_{\frac{d-1}{2}}(p,q)$, the inequality
    \[
        \Big\|\sum_j \nu_j |e^{it\sqrt{\Delta + m^2}} \langle D \rangle^{-s} f_j|^2\Big\|_{L_t^{\frac{p}{2},\infty} L_x^{\frac{q}{2}}(I\times M)} \lesssim 
        \|\nu\|_{\ell^{\beta,1}},
    \]
 which is just the desired estimate \eqref{eq:BH1}.
\end{proof}

\begin{proof}[\textbf{Proof of Theorem \ref{ns2}}]
For any fixed $(\frac{1}{q},\frac{1}{p})$ in the interior of $O A_{\frac{d-1}{2}} E_{\frac{d-1}{2}}$,
choose two points $(\frac{1}{q},\frac{1}{p_0})$ and $(\frac{1}{q},\frac{1}{p_1})$ also in the interior of $O A_{\frac{d-1}{2}} E_{\frac{d-1}{2}}$ such that $\frac{1}{p}=\frac{\theta}{p_0}+\frac{1-\theta}{p_1}$, $\frac{1}{q}=\frac{\theta}{q_0}+\frac{1-\theta}{q_1}$ for some $\theta\in(0,1)$.
From Proposition \ref{prop01}, for $i=0,1$, we have
\[
    \Big\|\sum_j  \nu_j |e^{it\sqrt{\Delta + m^2}} f_j|^2\Big\|_{L_t^{\frac{p_i}{2},\infty}L_x^{\frac{q_i}{2}}(I\times M)} \lesssim 
    \|\nu\|_{\ell^{\beta_i,1}}
\]
for all orthonormal systems $(f_j)_j$ in $H^{s_i}(M)$ with $s_i=\gamma(p_i,q_i)$ and all sequences $\nu=(\nu_j)_j\in \ell^{\beta_i,1}$ with $\beta_i=\beta_{\frac{d-1}{2}}(p_i,q_i)$.
Moreover, real interpolation yields \[
(L^{\frac{p_0}{2},\infty}_t L^{\frac{q_0}{2}}_x, L^{\frac{p_1}{2},\infty}_t L^{\frac{q_1}{2}}_x)_{\theta,\frac{p}{2}} = L^{\frac{p}{2}}_t L^{\frac{q}{2},\frac{p}{2}}_x,
\qquad
(\ell^{\beta_0,1}, \ell^{\beta_1,1})_{\theta,\frac{p}{2}} = \ell^{\beta_{\frac{d-1}{2}}(p,q),\frac{p}{2}}.\]
Note that $(\frac{1}{q},\frac{1}{p})$ lies in the interior of the triangle $O A_{\frac{d-1}{2}} E_{\frac{d-1}{2}}$,
hence $p<q$, $\beta_{\frac{d-1}{2}}(p,q)<\frac{p}{2}$, 
and thus $$L^{\frac{p}{2}}_t L^{\frac{q}{2},\frac{p}{2}}_x \subset L^{\frac{p}{2}}_t L^{\frac{q}{2}}_x, \qquad \ell^{\beta_{\frac{d-1}{2}}(p,q)}\subset \ell^{\beta_{\frac{d-1}{2}}(p,q),\frac{p}{2}}.$$
We obtain
\begin{equation}\label{localw6}
    \Big\|\sum_j  \nu_j |e^{it\sqrt{\Delta + m^2}} f_j|^2\Big\|_{L_t^{\frac{p}{2}}L_x^{\frac{q}{2}}(I\times M)} \lesssim 
    \|\nu\|_{\ell^{\beta}}
\end{equation}
for all orthonormal systems $(f_j)_j$ in $H^s(M)$ with $s=\gamma(p,q)$, and all sequences $\nu=(\nu_j)_j\in \ell^{\beta}$ with $\beta=\beta_{\frac{d-1}{2}}(p,q)$.
Interpolating between the estimates \eqref{localw6} for points  in the interior of $O A_{\frac{d-1}{2}} E_{\frac{d-1}{2}}$ 
and the sharp line estimate at the trivial endpoint $B$, which follows from Theorem \ref{sig1} and Minkowski's inequality, we obtain \eqref{localw6} for all points in the interior of $O A_{\frac{d-1}{2}} B$ and all orthonormal systems $(f_j)_j$ in $H^s(M)$ with $s \ge \gamma(p,q)$.

Finally, interpolating between points on the line segment $(C_{\frac{d-1}{2}}, D)$ and points inside the triangle $O A_{\frac{d-1}{2}} B$ that approach the line segment $(O, A_{\frac{d-1}{2}})$ arbitrarily closely,
we complete the proof of Theorem \ref{ns2}.
\end{proof}

\subsection{Comparison of the two proofs}
The main difference between Sections \ref{sec:proof1} and \ref{sec:proof2} lies in how one obtains the Lorentz-norm estimates in the interior of the triangle $O A_\sigma E_\sigma$, while the interpolation arguments are essentially the same.
In Section \ref{sec:proof1}, these estimates follow from the sharp line estimates of Theorem \ref{thm:sharp} and the Lieb-Sobolev inequality from Lemma \ref{lemma}. There is an $\epsilon$-loss of regularity resulting from the transition from \emph{sharp to non-sharp} estimates. 
In Section \ref{sec:proof2}, we instead prove frequency-localized estimates directly via the Schatten duality from Lemma \ref{dua} and then use the globalization argument in Lemma \ref{lem2}. One advantage is that there is no loss of regularity.

On the other hand, with the method in Section \ref{sec:proof1}, we cannot attain $\beta=p/2$ in the interior of $O A_\sigma C_\sigma D$, because it also happens on the sharp line.
The key problem is that the global Strichartz estimate fails on the critical line $[O, A_\sigma]$. We demonstrate for the fractional Schr\"odinger case.

\begin{example}\label{ex}
    Assume \((\frac{1}{q},\frac{1}{p})\) lies on the critical line $[O, A_{d/2}]$ and $\alpha>1$. 
    Let 
    \[s=\gamma_\alpha(p,q)= \frac d2-\frac dp, \qquad \beta=\beta_{d/2}(p,q)=\frac p2.\]
  Let $(e_k)_k$ be an orthonormal eigenbasis in $L^2(M)$ associated with the (increasing) eigenvalues $(\la_k)_k$ of $\sqrt{\Delta}$.
Define the dyadic subspace 
    \[
        \mathcal E_l
        =
        \operatorname{span}\{e_k:N_l\leq \lambda_k<2N_l\}, \qquad N_l=2^l,\,l\in \mathbb Z,
    \]
with dimension $D_l=\dim \mathcal E_l$.
    By the Weyl law and the local Weyl law, we have for all \(x\in M\) and all sufficiently large \(l\)
    \begin{equation}\label{ex}
        D_l\approx N_l^d, \qquad
        \sum_{N_l\leq \lambda_k<2N_l}|e_k(x)|^2
        \gtrsim N_l^d,
    \end{equation}
where the second inequality indicates that the kernel of the spectral projector onto
    \(\mathcal E_l\) has a uniform lower bound in $d$-power of the spectrum.
    Given any large integer $L$, for each $l$ such that \(1\leq l\leq L\), choose an orthonormal basis $\{e_{l,m}\}_{m=1}^{D_l}$
    of \(\mathcal E_l\) and define
    \[
        e_{l,m,s}=\langle D\rangle^{-s}e_{l,m},
        \qquad
        1\leq l\leq L,\quad 1\leq m\leq D_l.
    \]
    Then \((e_{l,m,s})_{l\ge 1,1\le m\le D_l}\) forms an orthonormal basis of \(H^s(M)\).
We assign the same coefficient $\nu_{l,m}=N_l^{-2d/p}$ in the \(l\)-th dyadic block.
    Then by the first estimate in \eqref{ex}
    \[
        \|\nu\|_{\ell^{p/2}}^{p/2}
        =
        \sum_{l=1}^L\sum_{m=1}^{D_l} |\nu_{l,m}|^{p/2}
        =
        \sum_{l=1}^L D_l N_l^{-d}
        \approx
               L.
    \]
    Note that 
    \[|e^{it\Delta^{\alpha/2}}e_{l,m,s}(x)|^2 = |e^{it \lambda_{l,m}^\alpha} (1+\lambda_{l,m}^2)^{-s/2} e_{l,m}|^2 \approx N_l^{-2s}|e_{l,m}(x)|^2.\]    
Using the second estimate in \eqref{ex}, we have
\[s_L(t,x) \gtrsim \sum_{l=1}^L N_l^{-2d/p}N_l^{-2s} \sum_{m=1}^{D_l}|e_{l,m}(x)|^2 \gtrsim \sum_{l=1}^L N_l^{-2d/p}N_l^{-2s}N_l^d=L,\]
  where   
  \[s_L(t,x)=\sum_{l=1}^L\sum_{m=1}^{D_l}\nu_{l,m}|e^{it\Delta^{\alpha/2}}e_{l,m,s}(x)|^2.\]
If the Strichartz estimate \eqref{eq:str} holds, then we have
    $$\|s_L\|_{L_t^{p/2}L_x^{q/2}(I\times M)} \lesssim \|\nu\|_{\ell^{p/2}},$$
    which implies $L \lesssim L^{2/p}$. 
We arrive at a contradiction when $L$ is sufficiently large, 
  since $p>2$ as $(\frac{1}{q},\frac{1}{p})$ lies on the critical line $[O, A_{d/2}]$.  \end{example}

In contrast, using the method in Section \ref{sec:proof2}, if we have the frequency-localized Strichartz estimates on the critical line $[O, A_\sigma)$, then we can further improve the range of $\beta$ to $\beta = \frac{p}{2}$ in the interior of $O A_\sigma C_\sigma D$.

\begin{prop}\label{prop:beta=p/2}
    Let $d \geq 3$, $\alpha \in (0,\infty)\setminus\{1\}$. 
    Assume that for all $\frac{d}{2}$-admissible pairs $(p,q)$ such that $(\frac{1}{q},\frac{1}{p})$ lies on the critical line $[O, A_{d/2})$, 
    we have 
    \begin{equation}\label{eq:local7}
        \Big\|\sum_j  \nu_j |e^{it\Delta^{\alpha/2}}P_l f_j|^2\Big\|_{L_t^{p/2}L_x^{q/2}(I\times M)} \lesssim
        2^{2l\gamma_{\alpha}(p,q)}\|\nu\|_{\ell^{p/2}}
    \end{equation}
for all orthonormal systems $(f_j)_j$ in $L^2(M)$ and all sequences $\nu=(\nu_j)_j\in \ell^{p/2}$.
    Then for all $\frac{d}{2}$-admissible pair $(p,q)$ such that $(\frac{1}{q},\frac{1}{p})$ lies in the interior of $O A_{d/2} C_{d/2} D$, we have
    \begin{equation}\label{eq:global-p/2-OACD}
        \Big\|\sum_j  \nu_j |e^{it\Delta^{\alpha/2}} f_j|^2\Big\|_{L_t^{{p}/{2}}L_x^{{q}/{2}}(I\times M)} \lesssim 
        \|\nu\|_{\ell^{p/2}}
    \end{equation}
for all orthonormal systems $(f_j)_j$ in $H^s(M)$ with $s\ge \gamma_{\alpha}(p,q)$ and all sequences $\nu=(\nu_j)_j\in \ell^{p/2}$.
\end{prop}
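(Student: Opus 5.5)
Following the second method, the plan is to globalize \eqref{eq:local7} in weak Lorentz norms via Lemma \ref{lem2}, upgrade to a strong estimate by real interpolation, and then interpolate with the $\beta=1$ estimates on $(C_{d/2},D)$. Since $\beta=p/2$ in the conclusion, every interpolation must keep the relation $\beta=p/2$ along the way.

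\textbf{Step 1 (extend the localized estimate off the critical line).} Interpolate \eqref{eq:local7} on $[O,A_{d/2})$ with the localized $\beta=1$ estimate on $(C_{d/2},D)$. For points there, Theorem \ref{sig0} applied to $P_lf_j$, with $\|P_l f_j\|_{H^{\gamma}}\lesssim 2^{l\gamma}$, together with Minkowski's inequality gives
\[
\Big\|\sum_j \nu_j|e^{it\Delta^{\alpha/2}}P_lf_j|^2\Big\|_{L^{p/2}_tL^{q/2}_x}\lesssim 2^{2l\gamma_\alpha(p,q)}\|\nu\|_{\ell^1}.
\]
Here $p=2$ on $(C_{d/2},D)$, so $\beta=1=p/2$. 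Since $1/\beta=2/p$ is affine in the exponents and $\gamma_\alpha$ is affine, complex interpolation, applied to the bilinear operator as in the proofs above, yields
\[
\Big\|\sum_j \nu_j|e^{it\Delta^{\alpha/2}}P_lf_j|^2\Big\|_{L^{p/2}_tL^{q/2}_x}\lesssim 2^{2l\gamma_\alpha(p,q)}\|\nu\|_{\ell^{p/2}}
\]
for all $(1/q,1/p)$ in the interior of $OA_{d/2}C_{d/2}D$. The hypothesis $d\ge3$ guarantees that $C_{d/2}\ne D$, so this region is nondegenerate.

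\textbf{Step 2 (weak globalization).} Fix an interior point $(1/q,1/p)$ and set $1/p_0=1/p+\delta$, $1/p_1=1/p-\delta$, keeping both points interior. Replace $f_j$ by $\langle D\rangle^{-s}f_j$ with $s=\gamma_\alpha(p,q)$. Then $\varepsilon_i=(-1)^i(2s-2\gamma_\alpha(p_i,q))$ satisfies $\varepsilon_0=\varepsilon_1>0$, because $\gamma_\alpha$ is decreasing and affine in $1/p$. The uniform $L^{p_i}_tL^q_x$ bound on $e^{it\Delta^{\alpha/2}}\langle D\rangle^{-s}f_j$ needed in Lemma \ref{lem2} follows from Theorem \ref{sig0}. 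Lemma \ref{lem2} then gives
\[
\Big\|\sum_j\nu_j|e^{it\Delta^{\alpha/2}}f_j|^2\Big\|_{L^{p/2,\infty}_tL^{q/2}_x}\lesssim\|\nu\|_{\ell^{\beta,1}}
\qquad\text{with } \tfrac1\beta=\tfrac12\Big(\tfrac{2}{p_0}+\tfrac2{p_1}\Big)=\tfrac2p,
\]
so again $\beta=p/2$.

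\textbf{Step 3 (upgrade to strong type).} Choose two nearby interior points $(1/q_i,1/p_i)$ on a line through $(1/q,1/p)$ and apply real interpolation with second index $p/2$. This produces $L^{p/2}_tL^{q/2,p/2}_x$ on the left and $\ell^{p/2,p/2}=\ell^{p/2}$ on the right.

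The \textbf{main obstacle} is this last embedding. We need $L^{q/2,p/2}\subset L^{q/2}$, which requires $p\le q$. This fails for interior points near $D$, where $p$ is close to $2$ and $q$ is large. I would handle this in two stages. First, obtain the strong estimate in the part of the region where $p<q$. Second, reach the remaining points by complex interpolation between those strong estimates and the strong $\beta=1$ estimates on $(C_{d/2},D)$, which keeps $\beta=p/2$ throughout. The case $s>\gamma_\alpha$ follows trivially, since $\|\langle D\rangle^{\gamma_\alpha-s}f_j\|$ is bounded and one can reorthonormalize by duality, or simply note that $H^s\subset H^{\gamma_\alpha}$ with the orthonormality handled through the operator formulation of Lemma \ref{dua}.
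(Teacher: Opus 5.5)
Your proposal is correct and follows the paper's proof essentially step for step: the $\beta=1$ localized bound on $[D,C_{d/2}]$ from Theorem~\ref{sig0} and Minkowski, complex interpolation with the assumed critical-line bound, globalization via Lemma~\ref{lem2} as in Proposition~\ref{prop:global-weak}, and a final real interpolation to strong type. Your ``main obstacle'' does not exist, since $A_{d/2}$, $C_{d/2}$ and $D$ all lie strictly above the diagonal $1/p=1/q$, so every point of $OA_{d/2}C_{d/2}D$ other than $O$ has $p<q$ (near $D$, $p\approx 2<q$); hence $L^{q/2,p/2}_x\subset L^{q/2}_x$ holds throughout, the two-stage workaround is superfluous, and the one detail to make explicit in Step 3 is that the two auxiliary points should lie on the level line $\frac dq+\frac1p=\mathrm{const}$ (resp.\ $\frac dq+\frac\alpha p=\mathrm{const}$ when $\alpha<1$) of $\gamma_\alpha$, so that both weak estimates concern the same operator $\langle D\rangle^{-\gamma_\alpha(p,q)}$ and no regularity is lost.
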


\begin{proof}
    If $(\frac{1}{q},\frac{1}{p})$ lies in the line segment $[D,C_{d/2}]$, 
    then by Theorem \ref{sig0} and the triangle inequality,
   \eqref{eq:local7} holds with $\frac{p}{2}=1$.
By the assumed  estimate on the critical line $[O, A_{d/2})$ and applying complex interpolation,
    we obtain that \eqref{eq:local7} holds for all points in the interior of the triangle $O A_{d/2} C_{d/2} D$.
By Lemma~\ref{lem2} and repeating the proof of Proposition~\ref{prop:global-weak},  when $(\frac1q,\frac1p)$ lies in the interior of $O A_{d/2} C_{d/2} D$, we have
    \[
        \Big\|\sum_j  \nu_j |e^{it\Delta^{\alpha/2}} f_j|^2\Big\|_{L_t^{\frac{p}{2},\infty}L_x^{\frac{q}{2}}(I\times M)} \lesssim 
        \|\nu\|_{\ell^{\beta,1}}
    \]
for all orthonormal systems $(f_j)_j$ in $H^s(M)$ with $s\ge\gamma_{\alpha}(p,q)$ and all sequences $\nu=(\nu_j)_j\in \ell^{\beta,1}$ with $\beta=\frac{p}{2}$.
Finally, using real interpolation as in the proof of Theorem \ref{thm:schodinger}, we obtain the desired estimate \eqref{eq:global-p/2-OACD} for all points in the interior of $O A_{d/2} C_{d/2} D$.
\end{proof}

\begin{rem}
We are currently unable to obtain the frequency-localized Strichartz estimates \eqref{eq:local7} on the critical line $[O, A_\sigma)$ for general compact manifolds. 
Therefore, it remains open whether the exponent $\beta=p/2$ can be attained in the interior of $O A_\sigma C_\sigma D$. However, we can jump this to prove a much better exponent on the flat torus exploring its particular geometry. See Corollary \ref{cor:OACD}.
\end{rem}

\section{Improvements on the flat torus}\label{sec:impr}
Using decoupling inequalities, one can obtain improved diagonal (single-function) Strichartz estimates for fractional Schr\"odinger equations on the flat torus, see \cite[Theorem 7]{wang2025strichartz}.
Combining these single-function estimates with interpolation further refines the Strichartz estimates for sharp admissible pairs of exponents on the flat torus.
Proposition \ref{prop:sub-torus} and Proposition \ref{prop:super-torus} below correspond respectively to \cite[Corollary 1 and Corollary 2]{wang2025strichartz}.
We first consider the subcritical regime.
\begin{prop}\label{prop:sub-torus}
     Let $d\ge 1,\,\alpha \ge2$. Suppose $2\le q\le\frac{2(d+2)}{d}$ and $\frac1p=\frac d2(\frac12-\frac1q)$. 
	 Then for all $\gamma \in (0,d/2]$ and all $s> \gamma$,  we have
\begin{equation}\label{eq:prop-subsharp}
    \Big\|\sum_j  \nu_j |e^{it\Delta^{\alpha/2}} f_j|^2\Big\|_{L^{p/2}_{t}L_x^{q/2}(\mathbb T^{d+1})} \lesssim 
    \|\nu\|_{\ell^\beta},  \qquad \beta< \frac{d}{d-2\gamma}\,,
    \end{equation}
for  all orthonormal systems $(f_j)_j\subset H^s(\mathbb T^d)$ and
    all sequences $\nu=(\nu_j)_j\in \ell^\beta$.
\end{prop}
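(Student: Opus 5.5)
The plan is to interpolate, along the subcritical portion of the sharp line $\frac1p=\frac d2(\frac12-\frac1q)$, between two endpoint estimates: the trivial point $B$ and an improved single-function estimate at a suitable point.

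\emph{Endpoint 1 (the point $B$).} Here $(p,q)=(\infty,2)$. Since $e^{it\Delta^{\alpha/2}}$ is unitary on $L^2$, the triangle inequality gives
\[
\Big\|\sum_j \nu_j|e^{it\Delta^{\alpha/2}}f_j|^2\Big\|_{L^\infty_tL^1_x}\le \|\nu\|_{\ell^1}
\]
for orthonormal $(f_j)_j$ in $L^2$, that is, with $s=0$.

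\emph{Endpoint 2 (improved diagonal estimate).} By \cite[Theorem 7]{wang2025strichartz}, the decoupling-based improved single-function Strichartz estimate on $\mathbb T^d$ gives
\[
\|e^{it\Delta^{\alpha/2}}f\|_{L^{p}_tL^{q}_x(\mathbb T^{d+1})}\lesssim \|f\|_{H^{\epsilon}}
\]
at sharp admissible points in the subcritical range $2\le q\le \frac{2(d+2)}{d}$ (decoupling reaches the Stein--Tomas exponent $\frac{2(d+2)}{d}$ with only an $\epsilon$-loss), in particular at the point $q_0=\frac{2(d+2)}{d}=p_0$, which is $E_{d/2}$. The hypothesis $\alpha\ge 2$ enters here, since decoupling needs sufficient curvature of $|\xi|^\alpha$; we also use $|I|\approx1$ via periodicity. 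By Minkowski's inequality this yields the orthonormal estimate with $\beta=1$ and regularity $s=\epsilon$. More generally, at any subcritical sharp point $(p_1,q_1)$ we get $\beta=1$ with $s>0$.

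\emph{Orthonormal Theorem \ref{thm:sharp} estimate.} Theorem \ref{thm:sharp}(i) at the same point gives $\beta=\beta_{d/2}(p,q)=\frac{2q}{q+2}$ with $s>\gamma_\alpha(p,q)=\frac1p$.

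\emph{Interpolation in $(s,1/\beta)$.} Fix $(p,q)$ on the segment. We have the estimate with $(s,\beta)=(0^+,1)$ (single-function) and with $(s,\beta)=(\frac1p^+,\frac{2q}{q+2})$ (Theorem \ref{thm:sharp}). Interpolation is along a fixed $(p,q)$: we vary the Sobolev weight by writing $f_j=\langle D\rangle^{-s}g_j$ and use complex interpolation of the operator $\nu\mapsto\sum_j\nu_j|e^{it\Delta^{\alpha/2}}\langle D\rangle^{-z}g_j|^2$, or equivalently the duality principle (Lemma \ref{dua}) with Schatten interpolation of $W\langle D\rangle^{-z}TT^*\langle D\rangle^{-\bar z}\overline W$. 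This gives estimates for $(s,1/\beta)$ on the segment joining these two points. Since $\frac1{\beta_{d/2}}=1-\frac{2}{dp}$ (using $\frac{\sigma}{\beta}=\frac1p+\frac{2\sigma}{q}$ and the sharp line), the segment reads $\frac1\beta=1-\frac{2s}{d}$ for $s\le \frac1p$, i.e.\ $\beta=\frac{d}{d-2s}$.

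\emph{Extension to $\gamma>1/p$.} To reach $s$ up to $d/2$, we interpolate instead with the Lieb--Sobolev type bound from Lemma \ref{lemma} on $L^\infty_t$, or with a crude bound in which $\beta\to\infty$ as $s\to d/2$. Bessel's inequality and the Weyl law give
\[
\sum_j|\langle D\rangle^{-s}g_j(x)|^2\lesssim 1 \quad\text{for } s>\tfrac d2,
\]
hence $\beta=\infty$ at $s=\frac d2^+$. Interpolating the $(0^+,1)$ and $(\frac d2^+,\infty)$ endpoints (both holding for all $(p,q)$ on the segment, since $L^{p/2}_tL^{q/2}_x\supset L^\infty$ on the compact set) again gives $\frac1\beta=1-\frac{2s}{d}$ for $s\in(0,d/2]$. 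For $s>\gamma$ with $\beta<\frac{d}{d-2\gamma}$ we then choose $\gamma'\in(\gamma,s)$.

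\emph{Main obstacle.} The key point is checking that the cited improved diagonal estimate holds with only $\epsilon$-loss along the whole range $q\le\frac{2(d+2)}{d}$, including small $d$ and general $\alpha\ge2$. The rigorous justification of interpolation in the derivative parameter, via an analytic family with an imaginary-order $\langle D\rangle^{ib}$ that is unitary, is routine.
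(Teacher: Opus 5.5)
Your proposal is correct and follows essentially the route the paper indicates. The paper gives no proof of its own; it defers to \cite[Corollary 1]{wang2025strichartz}, i.e.\ the decoupling-based single-function bound of \cite[Theorem 7]{wang2025strichartz} combined with interpolation. In your version, the $\epsilon$-loss single-function estimate on $[E_{d/2},B]$ gives $(s,\frac1\beta)=(0^+,1)$, Bessel plus the Weyl law gives $(\frac d2^+,0)$, and interpolating in $(s,\frac1\beta)$ at fixed $(p,q)$ yields exactly $\beta<\frac{d}{d-2\gamma}$ for $s>\gamma$.

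Two small remarks. First, the detour through Theorem \ref{thm:sharp} is superfluous, since that estimate already lies on the line $\frac1\beta=1-\frac{2s}{d}$. Second, the analytic family should be the polarized density of $e^{itP}\langle D\rangle^{-z}\Gamma\langle D\rangle^{-z}e^{-itP}$, or equivalently $We^{itP}\langle D\rangle^{-2z}e^{-it'P}\overline W$ in dual form, because $|\cdot|^2$ and $\langle D\rangle^{-\bar z}$ are not holomorphic in $z$.
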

Interpolating with the above estimate on the sharp segment $[E_{d/2}, B_{d/2}]$, we can further improve Theorem \ref{thm:schodinger} in the interior of the triangle $O E_{d/2} B_{d/2} $.

\begin{cor}\label{cor0}
    Let $d\ge 1$, $\alpha \ge 2$. Suppose $(\frac{1}{q},\frac{1}{p})$ belongs to the interior of the triangle $O E_{d/2} B_{d/2}$.
    Then for all $\gamma \in \left(\frac d2-\frac dq-\frac2p,\,\frac d2\right]$ and all $s>\gamma$, 
    the Strichartz estimate \eqref{eq:prop-subsharp} holds
    for all orthonormal systems $(f_j)_j\subset H^s(\mathbb T^d)$ and all sequences $\nu=(\nu_j)_j\in \ell^\beta$. 
\end{cor}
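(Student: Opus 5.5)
The plan is to interpolate two ingredients, Proposition \ref{prop:sub-torus} on the sharp segment $[E_{d/2},B_{d/2}]$ and the trivial $L^\infty$-type estimate at the origin $O$, and then track how the regularity and system exponents scale along rays from $O$. Fix $(\frac1q,\frac1p)$ in the interior of $OE_{d/2}B_{d/2}$. The ray from $O$ through this point meets the segment $(E_{d/2},B_{d/2})$ at a point $(\frac1{q_1},\frac1{p_1})$ with $\frac1{p_1}=\frac d2(\frac12-\frac1{q_1})$. Then $(\frac1q,\frac1p)=\theta(\frac1{q_1},\frac1{p_1})$ for some $\theta\in(0,1)$.

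\textbf{Endpoint at $O$.} At $O$ we use the Lieb--Sobolev inequality with $p\to\infty$, or more simply the Sobolev/Weyl bound:
\[
\Big\|\sum_j\nu_j|e^{itP}\langle D\rangle^{-s_0}f_j|^2\Big\|_{L^\infty_{t,x}}\lesssim\|\nu\|_{\ell^\infty}\qquad\text{for }s_0>\tfrac d2,
\]
with $(f_j)_j$ orthonormal in $L^2$. This holds because $\sum_j|\langle D\rangle^{-s_0}e^{itP}f_j(x)|^2\le\|K(x,\cdot)\|_{L^2}^2=\sum_k\langle\lambda_k\rangle^{-2s_0}|e_k(x)|^2<\infty$ by the local Weyl law (Bessel's inequality, exactly as in \eqref{local3}).

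\textbf{Endpoint on the sharp segment.} Proposition \ref{prop:sub-torus} gives, for $\gamma_1\in(0,d/2]$, $s_1>\gamma_1$ and $\beta_1<\frac d{d-2\gamma_1}$,
\[
\Big\|\sum_j\nu_j|e^{itP}\langle D\rangle^{-s_1}f_j|^2\Big\|_{L^{p_1/2}_tL^{q_1/2}_x}\lesssim\|\nu\|_{\ell^{\beta_1}}.
\]

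\textbf{Interpolation.} Stein's complex interpolation applied to the analytic family $\langle D\rangle^{-z}$ (equivalently, interpolating the dual Schatten bounds of Lemma \ref{dua} with $W$ varying analytically) yields the estimate at $(\frac1q,\frac1p)$ with
\[
s=\theta s_1+(1-\theta)s_0,\qquad \frac1\beta=\frac\theta{\beta_1}.
\]

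\textbf{Parameter check.} Set $\gamma=\theta\gamma_1+(1-\theta)\frac d2$. Any $s>\gamma$ is attainable, since $s_0$ and $s_1$ may be chosen arbitrarily close to $\frac d2$ and $\gamma_1$. The condition $\beta_1<\frac d{d-2\gamma_1}$ becomes
\[
\frac1\beta>\theta\,\frac{d-2\gamma_1}{d}=\frac{d-2\gamma}{d},
\]
using $\theta(d-2\gamma_1)=d-2\gamma$. Hence $\beta<\frac d{d-2\gamma}$, which is exactly the bound \eqref{eq:prop-subsharp}.

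\textbf{Range of $\gamma$.} As $\gamma_1$ ranges over $(0,d/2]$, $\gamma$ ranges over $((1-\theta)\frac d2,\frac d2]$. Since $\frac1p=\theta\frac1{p_1}$, $\frac1q=\theta\frac1{q_1}$ and $\frac2{p_1}=\frac d2-\frac d{q_1}$, we compute
\[
\frac d2-\frac dq-\frac2p=\frac d2-\theta\Big(\frac d{q_1}+\frac2{p_1}\Big)=\frac d2-\theta\frac d2=(1-\theta)\frac d2.
\]
So the range is precisely $\gamma\in(\frac d2-\frac dq-\frac2p,\frac d2]$, as claimed.

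\textbf{Main obstacle.} The delicate point is justifying interpolation in which both the Sobolev weight and the Lebesgue exponents vary with the orthonormal-system structure. The plan is to pass to the dual Schatten formulation of Lemma \ref{dua}: there $\|W\langle D\rangle^{-z}e^{itP}(\cdots)^*\overline W\|_{\mathfrak S^{\beta'}}$ with $W$ in mixed Lebesgue spaces is a standard Stein-interpolable family, because the imaginary powers $\langle D\rangle^{ib}$ are unitary on $L^2$. The strict inequality $\beta<\frac d{d-2\gamma}$ leaves room to absorb any $\epsilon$-losses introduced by this step.
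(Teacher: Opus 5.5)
Your proposal is correct and follows the paper's interpolation scheme. You take the ray from $O$ through $(\frac1q,\frac1p)$ to the point $\theta^{-1}(\frac1q,\frac1p)$ on $(E_{d/2},B_{d/2})$, apply Proposition \ref{prop:sub-torus} there, and do the same bookkeeping: $\gamma=\theta\gamma_1+(1-\theta)\frac d2$, $d-2\gamma=\theta(d-2\gamma_1)$, and $\frac d2-\frac dq-\frac2p=(1-\theta)\frac d2$.

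The one real difference is the second endpoint. The paper never uses $O$ itself. It takes points $X_1\in(O,X_0)$ tending to $O$ and applies Theorem \ref{thm:schodinger}(I) there, with $s\ge\gamma_\alpha(p_1,q_1)\to\frac d2$ and $\beta\le\beta_{d/2}(p_1,q_1)\to\infty$; the strict inequalities in the conclusion absorb this limit. You instead prove the bound at $O$ directly, with $\beta=\infty$ and $s_0>\frac d2$, from Bessel's inequality and the local Weyl law. On the torus this is even simpler, since the eigenfunctions $e^{ik\cdot x}$ have constant modulus. Your route makes the corollary depend only on Proposition \ref{prop:sub-torus}, not on the main theorem, and removes the limiting step. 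The $\epsilon$-loss at $O$ costs nothing, because the statement only claims $s>\gamma$ and a strict upper bound on $\beta$. The paper's route recycles its main result but gains nothing extra for this statement.

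On the interpolation step, the cleanest justification is Stein interpolation of the map sending $\Gamma$ to the density of $e^{itP}\langle D\rangle^{-s(z)}\Gamma\langle D\rangle^{-s(z)}e^{-itP}$, from $\mathfrak S^{\beta}$ into mixed Lebesgue spaces. On the boundary lines the unitary factors $\langle D\rangle^{-i\,\mathrm{Im}\,s(z)}$ are absorbed into $\Gamma$. No $\epsilon$-loss arises in this step, so your closing remark about absorbing one is unnecessary.
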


\begin{proof}
    For a point $(\frac{1}{q},\frac{1}{p})$ in the interior of the triangle $O E_{d/2} B_{d/2}$,
    we can find a point $X_0=(\frac{1}{q_0},\frac{1}{p_0})$ on the segment $(E_{d/2}, B_{d/2})$ such that $\frac{1}{q}=\frac{\theta}{q_0}$ and $\frac{1}{p}=\frac{\theta}{p_0}$, where $\theta= \frac{2(dp+2q)}{dpq} \in (0,1)$.
    Then for the point $X_0$, the Strichartz estimate \eqref{eq:prop-subsharp} 
    holds for all $s > \gamma_0$, where $\gamma_0 \in (0,d/2]$, by Proposition \ref{prop:sub-torus}.
    On the other hand, consider a point $X_1=(\frac{1}{q_1},\frac{1}{p_1})$ in the interior of the triangle $O E_{d/2} B_{d/2}$ lying on the segment $(O, X_0)$ and arbitrarily close to the origin $O$.
    Then, as $X_1 \to O$, we have $\gamma_{\alpha}(p_1,q_1) \to d/2$ and $\beta_{d/2}(p_1,q_1) \to \infty$, and by Theorem \ref{thm:schodinger}, the corresponding orthonormal Strichartz estimate holds for all $s \ge \gamma_{\alpha}(p_1,q_1)$ and $\beta \le \beta_{d/2}(p_1,q_1)$.
    Interpolating the above two estimates, we obtain the desired estimate for all
    \[s>\gamma=\theta \gamma_0+(1-\theta)\frac{d}{2} \in \left(\frac d2-\frac dq-\frac2p,\,\frac{d}{2}\right], \qquad \frac{1}{\beta}>\frac{\theta}{d/(d-2\gamma_0)} 
    =\frac{d-2\gamma}{d}.\]
\end{proof}

\begin{rem}
When we choose $\gamma>\gamma_\alpha(p,q)$, with loss of some regularity,  
Corollary \ref{cor0} indeed improves Theorem \ref{thm:schodinger} in the subcritical regime as the system exponent 
\[\frac{d}{d-2\gamma}>\beta_{d/2}(p,q).\] 
\end{rem}
Next we consider the improvement in the supercritical regime.
\begin{prop}\label{prop:super-torus}
    Let $d\ge5$, $\alpha\ge2$. Suppose $\frac{2(d+1)}{d-1}< q\le \frac{2d}{d-2}$ with $\frac1p=\frac d2(\frac12-\frac1q)$. Then for all $s>\gamma_{\alpha}(p,q)$, we have
    \begin{equation}\label{propeq}
    \Big\|\sum_j  \nu_j |e^{it\Delta^{\alpha/2}} f_j|^2\Big\|_{L_t^{p/2}L_x^{q/2}(\mathbb T^{d+1})} \lesssim 
    \|\nu\|_{\ell^\beta}, \qquad \beta<\frac{pd(d-3)}{8+pd(d-4)},
    \end{equation}
for all orthonormal systems $(f_j)_j\subset H^s(\mathbb T^d)$ and all sequences $\nu=(\nu_j)_j\in \ell^\beta$.
\end{prop}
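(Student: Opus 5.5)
The plan is to work at a fixed dyadic frequency $N=2^l$, obtain a frequency-localized orthonormal bound with an $N^{\epsilon}$-loss, and then sum over $l$ using the strict inequality $s>\gamma_\alpha(p,q)$. For orthonormal $(f_j)_j\subset L^2(\mathbb T^d)$ and $P_l=\varphi(2^{-l}\sqrt{\Delta})$ I aim to show
\[
\Big\|\sum_j \nu_j|e^{it\Delta^{\alpha/2}}P_lf_j|^2\Big\|_{L^{p/2}_tL^{q/2}_x(\mathbb T^{d+1})}\lesssim_\epsilon 2^{2l(\gamma_\alpha(p,q)+\epsilon)}\|\nu\|_{\ell^\beta}
\]
for every $\beta<\frac{pd(d-3)}{8+pd(d-4)}$ and every sharp pair with $\frac{2(d+1)}{d-1}<q\le\frac{2d}{d-2}$. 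The passage from these blocks to $H^s$ data, with $s>\gamma_\alpha(p,q)$, is the same as in the proof of Theorem \ref{thm:sharp}. One uses the square-function (vector-valued Littlewood--Paley) decomposition and the triangle inequality in $L^{p/2}_tL^{q/2}_x$, which is legitimate since $p,q\ge2$. The spare $2^{-2l(s-\gamma_\alpha-\epsilon)}$ makes the dyadic sum converge.

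Everything then rests on two frequency-localized inputs on the sharp segment $(C_{d/2},A_{d/2}]$, to be interpolated against each other. The first is the improved single-function estimate from \cite[Theorem 7]{wang2025strichartz}, proved by decoupling: $\|e^{it\Delta^{\alpha/2}}P_lf\|_{L^{p_0}_tL^{q_0}_x}\lesssim 2^{l(\gamma_\alpha(p_0,q_0)+\epsilon)}\|f\|_2$ at a suitable sharp point. Via Minkowski's inequality as in part (II) of the proof of Theorem \ref{thm:schodinger}, it gives a $\beta=1$ orthonormal bound at that point. The second is the orthonormal bound with large $\beta$ near the critical point $A_{d/2}$ coming from Theorem \ref{thm:sharp}, localized to frequency $2^l$. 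This localization is checked through the duality principle, Lemma \ref{dua}, exactly as in Proposition \ref{prop:local}, because $P_l$ does not preserve orthonormality.

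Both inputs are Schatten bounds for $WT_l T_l^*\overline W$. I therefore interpolate at the level of the analytic family $T^\eps_{z,l}$ of Proposition \ref{prop:local}. On the line $\operatorname{Re} z=0$, the $\mathfrak S^\infty$ bound \eqref{eq:infinitybound} is kept. On the other line, the $\mathfrak S^2$ Hardy--Littlewood--Sobolev bound is combined with the decoupling-improved $L^{p_0}L^{q_0}$ bound, which is a $\mathfrak S^\infty$-type bound for $WT_lT_l^*\overline W$ with $W$ in the dual mixed norm. The point of this combination is to gain over the general-manifold dispersive bound of Lemma \ref{din}.

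Since $1/\beta$ enters affinely in $1/p$ along the sharp line under complex interpolation, I expect
\[
\frac1\beta=\frac{d-4}{d-3}+\frac{8}{pd(d-3)}
\]
to be precisely the line produced by this interpolation. Note that this line hits neither $1/\beta=1$ nor $1/\beta=0$ inside the segment. So at least one endpoint must be an off-line or optimized estimate, not simply the points $A_{d/2}$ or $C_{d/2}$.

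The main obstacle is identifying exactly which pair of endpoint estimates, with which Schatten exponents, reproduces this line. I would first try pairing the $\mathfrak S^{2}$ kernel bound from the $\mathbb T^d$ dispersive estimate on short time intervals of length $2^{(1-\alpha)l}$ (as in \eqref{local4}) with the decoupling estimate at the Keel--Tao-type endpoint $q_0=\frac{2d}{d-2}$. I would then solve for the interpolation parameter and compare the result with the claimed exponent. The restrictions $d\ge5$ and $q>\frac{2(d+1)}{d-1}$ should appear as the range where the decoupling bound beats the critical exponent $p/2$. The $\epsilon$-losses are harmless because $s>\gamma_\alpha(p,q)$ is strict.
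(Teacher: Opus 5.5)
\textbf{There is a genuine gap.} Your proposal never produces an estimate with $\beta<\frac{pd(d-3)}{8+pd(d-4)}$. The decisive step, which estimates are interpolated and why they give this exponent, is left as ``the main obstacle'', and the mechanism you sketch for it would not work:
\begin{itemize}
\item The decoupling input is a bound for the actual operator $T_lT_l^*$, i.e.\ at $z=-1$. It cannot be placed on the boundary line $\operatorname{Re}z=-r/2$ of the family $T^\eps_{z,l}$, whose kernel there carries the weight $(t-s)^{r/2-1-ib}$.
\item The short-time dispersive/Hardy--Littlewood--Sobolev route is the general-manifold argument behind Theorem~\ref{thm:sharp}. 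By itself it cannot beat $\beta<p/2$.
\item \cite[Theorem 7]{wang2025strichartz} (like Bourgain--Demeter) is a diagonal $L^p_{t,x}$ bound for $p\ge\frac{2(d+2)}{d}$. There is no decoupling estimate ``at the Keel--Tao endpoint'' to pair with.
\end{itemize}
Your reading of the target line is also off. At $p=\frac{2(d+1)}{d}$ it gives $\frac1\beta=\frac{d}{d+1}$, which is exactly Theorem~\ref{thm:sharp}(ii) at $A_{d/2}$; at $p=2$ it gives $\frac1\beta=\frac{(d-2)^2}{d(d-3)}$. Since $\frac1\beta$ and $\gamma_\alpha=\frac1p$ are both affine along the segment, the only new input needed is an estimate at $C_{d/2}$ with $s>\frac12$ and $\frac1\beta\le\frac{(d-2)^2}{d(d-3)}$. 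There Theorem~\ref{thm:sharp} only gives $\beta=1$.

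\textbf{The missing idea.} You need to convert surplus regularity into a larger $\beta$: $s=\frac12$ at $C_{d/2}$ versus the $\eps$-loss of decoupling. For a fixed orthonormal family, the map $\nu\mapsto\sum_j\nu_j|e^{it\Delta^{\alpha/2}}P_lf_j|^2$ is linear in $\nu$. Interpolate it against the trivial $\ell^\infty$ bound $2^{ld}$ from \eqref{local3}, exactly as in Proposition~\ref{prop:sub-torus} and Corollary~\ref{cor0}. Concretely, let $p_0=\frac{2(d+2)}{d}$.
\begin{itemize}
\item Decoupling at $p_0$, H\"older in $t$ and the triangle inequality give an $\ell^1\to L^1_tL^{p_0/2}_x$ bound with constant $2^{2\eps l}$.
\item The bound \eqref{local3} plus H\"older gives an $\ell^\infty\to L^1_tL^{\frac{d+2}{d-4}}_x$ bound with constant $2^{ld}$; this is where $d\ge5$ enters.
\item Interpolating with weight $\frac1d$ on the second bound lands exactly at $L^1_tL^{\frac{d}{d-2}}_x$, i.e.\ at $C_{d/2}$. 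The constant is $2^{l(1+\eps)}$ and $\frac1\beta=\frac{d-1}{d}\le\frac{(d-2)^2}{d(d-3)}$.
\end{itemize}
After the dyadic summation you describe, which is fine because $s>\gamma_\alpha$ is strict, interpolate this with Theorem~\ref{thm:sharp}(ii) at $A_{d/2}$. This is done the same way the paper interpolates in Corollaries~\ref{cor0} and~\ref{cor:OACD}, and it gives the whole segment.

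The paper itself does not reprove the statement. It quotes \cite[Corollary 2]{wang2025strichartz}, described only as decoupling single-function bounds combined with interpolation. So your ingredients point in the right direction, but the derivation that actually yields the exponent is absent from your proposal.
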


Theorem \ref{thm:schodinger} can be improved in the supercritical regime, namely the interior of \(OA_{d/2}C_{d/2}D\), by interpolation with the above estimate on the sharp supercritical segment $(A_{d/2},C_{d/2}]$. 
For every point, the interpolation is quite flexible, and a straightforward calculation shows that the optimal choice is obtained by taking the line that passes through the point \((1/q,1/p)\) and the segment $(A_{d/2},C_{d/2}]$ with the \emph{best ratio} (i.e., relatively closest to the segment $(A_{d/2},C_{d/2}]$). The optimal line is determined by the origin $O$ for the region $OA_{d/2}C_{d/2}$, while for region $OC_{d/2}D$ it is determined by the Keel-Tao endpoint $C_{d/2}$. See Figure \ref{fig1}.
Therefore, combining Theorem \ref{thm:schodinger} and Proposition \ref{prop:super-torus}, we obtain the following corollary.

\begin{cor}\label{cor:OACD}
    Let $d\ge5$, $\alpha\ge2$.  Suppose $(p,q)$ is non-sharp $\frac{d}{2}$-admissible. \\    
 \rm{(I)} If $(\frac{1}{q},\frac{1}{p})$ lies in the interior of $O A_{d/2} C_{d/2}$. Then for all $s>\gamma_{\alpha}(p,q)$, we have
    \[
    \Big\|\sum_j  \nu_j |e^{it\Delta^{\alpha/2}} f_j|^2\Big\|_{L_t^{p/2}L_x^{q/2}(\mathbb T^{d+1})} \lesssim 
    \|\nu\|_{\ell^\beta}, \qquad \beta<\frac{d(d-3)pq}{2d(d-4)p+4(d-2)q},
    \]
for all orthonormal systems $(f_j)_j\subset H^s(\mathbb T^d)$ and all sequences $\nu=(\nu_j)_j\in \ell^\beta$.\\    
\rm{(II)}    If $(\frac{1}{q},\frac{1}{p})$ lies in the interior of $O C_{d/2} D$. Then for all $s\ge\gamma_{\alpha}(p,q)$, we have
    \[
    \Big\|\sum_j  \nu_j |e^{it\Delta^{\alpha/2}} f_j|^2\Big\|_{L_t^{p/2}L_x^{q/2}(\mathbb T^{d+1})} \lesssim 
    \|\nu\|_{\ell^\beta}, \qquad  \beta < \frac{(d-2)(d-3)pq}{2(d-2)(d-3)q-2(d-4)p},
    \]
    for all orthonormal systems $(f_j)_j\subset H^s(\mathbb T^d)$ and all sequences $\nu=(\nu_j)_j\in \ell^\beta$.

\end{cor}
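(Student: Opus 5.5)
Both parts follow by interpolating the improved sharp-line bound of Proposition~\ref{prop:super-torus} on $(A_{d/2},C_{d/2}]$ with a second estimate: for (I) an estimate near $O$ from Theorem~\ref{thm:schodinger}, for (II) the trivial estimate on $(C_{d/2},D)$.

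\emph{Interpolation mechanism.} To make the interpolation rigorous with varying regularity, I would fix an orthonormal system $(g_j)_j\subset L^2$ and write $f_j=\langle D\rangle^{-s}g_j$. I would then apply Stein's interpolation to the map $\nu\mapsto \sum_j\nu_j|e^{it\Delta^{\alpha/2}}\langle D\rangle^{-s(z)}g_j|^2$, with $s(z)$ affine in $z$. Equivalently, I would work on the dual Schatten side via Lemma~\ref{dua}, with $\langle D\rangle^{-z}$ inserted into the analytic family. Since imaginary powers $\langle D\rangle^{ib}$ are unitary on $L^2$, they preserve orthonormality, so this is routine. Because $\gamma_\alpha(p,q)$ is affine in $(1/q,1/p)$, the interpolated regularity threshold is exactly $\gamma_\alpha(p,q)$.

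\emph{Part (I).} Fix $X=(1/q,1/p)$ in the interior of $OA_{d/2}C_{d/2}$. The ray from $O$ through $X$ meets $(A_{d/2},C_{d/2}]$ at a point $X_0=(1/q_0,1/p_0)$, and $X=\theta X_0$. Using $1/p_0=\frac d4-\frac d{2q_0}$, this gives $\theta=\frac{4}{d}\bigl(\frac1p+\frac d{2q}\bigr)=\frac{4q+2dp}{dpq}$. At $X_0$, Proposition~\ref{prop:super-torus} gives the estimate for $s_0>\gamma_\alpha(p_0,q_0)$ and $\frac1{\beta_0}>\frac{8+p_0d(d-4)}{p_0d(d-3)}$. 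For the other endpoint, take $X_1$ on $(O,X_0)$ tending to $O$. There Theorem~\ref{thm:schodinger} gives the estimate with $s_1=\gamma_\alpha(p_1,q_1)$ and $\beta_1=\beta_{d/2}(p_1,q_1)\to\infty$, so that $1/\beta_1\to 0$. Interpolating, then letting $X_1\to O$ and $\beta_0$ approach its threshold, yields all
\[
\frac1\beta>\frac{\theta}{\beta_0}=\frac{8}{pd(d-3)}+\frac{(4q+2dp)(d-4)}{dpq(d-3)}=\frac{2d(d-4)p+4(d-2)q}{d(d-3)pq},
\]
which is the stated range, with $s>\gamma_\alpha(p,q)$.

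\emph{Part (II).} For $X$ in the interior of $OC_{d/2}D$, I would interpolate between the following two estimates, taken along a line through $X$:
\begin{itemize}
\item a point $X_0$ on $(A_{d/2},C_{d/2}]$ near $C_{d/2}$, where $\beta_0\uparrow\frac{d(d-3)}{(d-2)^2}$ as $X_0\to C_{d/2}$;
\item a point $X_1$ on $(C_{d/2},D)$, where $\beta=1$ and $s=\gamma_\alpha$ with no loss, by Theorem~\ref{sig0} and Minkowski.
\end{itemize}
Among the admissible lines I would optimize the ratio, guided by the remark that the optimal line is determined by $C_{d/2}$. I expect this to give the stated bound $\beta<\frac{(d-2)(d-3)pq}{2(d-2)(d-3)q-2(d-4)p}$; checking it requires a careful but elementary computation of $\theta$ along that line.

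\emph{Main obstacle.} The regularity in (II) is claimed with $s\ge\gamma_\alpha$, whereas Proposition~\ref{prop:super-torus} only gives $s_0>\gamma_\alpha(p_0,q_0)$. To remove the loss I would first try a further interpolation with Theorem~\ref{thm:schodinger}(II), which holds at $s=\gamma_\alpha$ with $\beta<p/2$. Only a vanishing weight would be placed on the lossy endpoint, using that the $\beta$-condition is an open inequality. If that fails, I would settle for $s>\gamma_\alpha(p,q)$ in (II) as well.
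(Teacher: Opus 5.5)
Your Part (I) is the paper's argument. It uses the same ray from $O$ through $X$, the same $\theta=\frac{2(dp+2q)}{dpq}$, the same limit $X_1\to O$ and the same arithmetic. There are two harmless slips. First, $X_1$ lies in the supercritical region, where Theorem~\ref{thm:schodinger}(II) gives $\beta_1<p_1/2$ rather than $\beta_{d/2}(p_1,q_1)$; since $p_1/2\to\infty$, the conclusion is unchanged. Second, the family $\nu\mapsto\sum_j\nu_j|e^{it\Delta^{\alpha/2}}\langle D\rangle^{-s(z)}g_j|^2$ is not analytic in $z$ because of the modulus, so use the dual Schatten formulation you also mention.

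Part (II) has a genuine gap: the two inputs you pick cannot yield the claimed bound. Both of your endpoints have $\frac1p>\frac{d}{2(d+1)}$, so no point of $OC_{d/2}D$ with $\frac1p\le\frac{d}{2(d+1)}$ is ever reached. With $X_0$ near $C_{d/2}$, only a thin strip under $(C_{d/2},D)$ is covered. The line through $C_{d/2}$ and $X$, which you correctly call optimal, leaves the triangle through $(O,D)$, not through $(C_{d/2},D)$. Even where your interpolation applies, $\beta=1$ is too weak an input. The claimed exponent
\[
\frac1\beta=\frac2p-\frac{2(d-4)}{(d-2)(d-3)q}
\]
is the affine function of $(\frac1q,\frac1p)$ that equals the torus value $\frac{(d-2)^2}{d(d-3)}$ at $C_{d/2}$ and equals $\frac2p$ on the edge $\frac1q=0$. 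It is strictly below $1$ on $(C_{d/2},D)$. It is also strictly below the torus exponent $\frac{8+p_0d(d-4)}{p_0d(d-3)}$ on $(A_{d/2},C_{d/2})$, the difference being $\frac{(d-4)(d-1)}{d-3}\bigl(\frac{d}{(d-2)q_0}-\frac12\bigr)>0$. So every interpolation of your two inputs lands strictly above it, i.e.\ gives a strictly smaller $\beta$-range.

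The missing ingredient is Theorem~\ref{thm:schodinger}(II) near the \emph{left} edge. Take $X_2=(0,\frac1{p_2})\in(O,D)$ on the line $C_{d/2}X$. Interpolate the estimate at $C_{d/2}$ itself with Theorem~\ref{thm:schodinger}(II) at points $X_3\to X_2$, where $s=\gamma_\alpha$ and $\beta<p_3/2$. With $\theta=\frac{2d}{(d-2)q}$ this gives $\frac{1-\theta}{p_2/2}+\theta\frac{(d-2)^2}{d(d-3)}=\frac2p-\theta\frac{d-4}{d(d-3)}$, which is exactly the stated bound.

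Your proposed removal of the regularity loss also cannot work. Since $\gamma_\alpha$ is affine, any weight $\tau>0$ on an endpoint with $s_0=\gamma_\alpha(X_0)+\epsilon$ gives $s=\gamma_\alpha(X)+\tau\epsilon>\gamma_\alpha(X)$. Weight $\tau=0$ only returns $\beta<p/2$. The paper obtains $s\ge\gamma_\alpha$ in (II) by taking the torus estimate at the endpoint $C_{d/2}$ itself with $s\ge\gamma_\alpha(p_c,q_c)$. Proposition~\ref{prop:super-torus} is stated with $s>\gamma_\alpha$, so your worry does point at the step that needs justification. What is needed there is a lossless estimate at $C_{d/2}$, and interpolation cannot manufacture it.
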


\begin{proof}
    (I) For point $(\frac{1}{q},\frac{1}{p})$ in the interior of the triangle $O A_{d/2} C_{d/2}$, we can find a point $X_0=(\frac{1}{q_0},\frac{1}{p_0})$ on the segment $(A_{d/2}, C_{d/2})$ such that $\frac{1}{q}=\frac{\theta}{q_0}$, $\frac{1}{p}=\frac{\theta}{p_0}$, where $\theta= \frac{2(dp+2q)}{dpq} \in (0,1)$.
    Then for point $X_0$, the Strichartz estimate \eqref{propeq} holds for all $s>\gamma_{\alpha}(p_0,q_0)$ and the corresponding $\beta$ by Proposition \ref{prop:super-torus}.
    On the other hand, we consider the point $X_1=(\frac{1}{q_1},\frac{1}{p_1})$ in the interior of the triangle $O A_{d/2} C_{d/2}$ arbitrarily close to the origin $O$, and lying on the segment $(O, X_0)$.
    Then for point $X_1 \rightarrow O$, the corresponding orthonormal Strichartz estimate holds for all $s\ge\gamma_{\alpha}(p_1,q_1) \rightarrow d/2$ and $\beta\le p_1/2 \rightarrow \infty$ by Theorem \ref{thm:schodinger}.
    Interpolating the above two estimates, we obtain the desired estimate for all $s>\gamma_{\alpha}(p,q)$ and 
    $$\frac{1}{\beta}>\frac{\theta[8+p_0d(d-4)]}{p_0d(d-3)} 
    =\frac{2d(d-4)p+4(d-2)q}{d(d-3)pq} .$$

    (II) For a point $(\frac{1}{q},\frac{1}{p})$ in the interior of the triangle $O C_{d/2} D$, 
    we can find a point $X_2=(\frac{1}{q_2},\frac{1}{p_2})=(0,\frac{(d-2)q-dp}{(d-2)pq-2dp})$ on the segment $(O, D)$ such that $\frac{1}{q}=\frac{1-\theta}{q_2}+\frac{\theta}{q_c}$, $\frac{1}{p}=\frac{1-\theta}{p_2}+\frac{\theta}{p_c}$, 
    where $(\frac{1}{q_c},\frac{1}{p_c})=C_{d/2}$ and $\theta=\frac{2d}{(d-2)q} \in (0,1)$.
    For Keel-Tao endpoint $C_{d/2}$, the Strichartz estimate \eqref{propeq} holds for all $s\ge \gamma_{\alpha}(p_c,q_c)$ and the corresponding $\beta$ by Proposition \ref{prop:super-torus}.
    On the other hand, consider point $X_3=(\frac{1}{q_3},\frac{1}{p_3})$ lies on the segment $(X_2, C_{d/2})$ and is arbitrarily close to $X_2$.
    For point $X_3\rightarrow X_2$, the corresponding orthonormal Strichartz estimate holds for all $s\ge\gamma_{\alpha}(p_3,q_3) \rightarrow \gamma_{\alpha}(p_2,q_2)$ and $\beta<p_3/2\rightarrow p_2/2$ by Theorem \ref{thm:schodinger}.
    Interpolating the above two estimates, we obtain the desired estimate for all $s\ge\gamma_{\alpha}(p,q)$ and
    $$\frac{1}{\beta}>\frac{1-\theta}{p_2/2}+\frac{\theta[8+p_cd(d-4)]}{p_cd(d-3)}=\frac{2(d-2)(d-3)q-2(d-4)p}{(d-2)(d-3)pq}.$$
\end{proof}

\begin{rem}
    Furthermore, \cite{wang2025strichartz} conjectured from the discrete restriction conjecture that the range of $\beta$ in Proposition \ref{prop:super-torus}
    can be further extended to \[\beta<\frac{d+1}{d}=\beta_{d/2}\left(\frac{2(d+1)}d,\frac{2(d+1)}{d-1}\right),\]
    which corresponds to the proved sharp exponent for the critical point $A_{d/2}$.
    If the conjecture holds, then the two estimates in Corollary \ref{cor:OACD} can be improved respectively to 
    \[\beta<\frac{(d+1)pq}{2dp+4q}  \quad  \text{and}\quad \beta<\frac{(d+1)(d-2)pq}{2(d+1)(d-2)q-2dp}.\]
\end{rem}

\appendix \section{Necessary conditions}  \label{app:nec}

In this appendix, we discuss necessary conditions for the range of the exponent \(\beta\). 
The arguments below are formulated in terms of frequency-localized Strichartz estimates.
This is sufficient for our purpose: if a global orthonormal Strichartz estimate holds with Sobolev regularity $s$,
then, after restricting the initial data $(f_j)_j$ to the localized subspace \(\{\lambda_k\le N\}\), we have
\begin{equation}\label{eq:applocal}
    \Big\|\sum_j \nu_j |e^{itP}f_j|^2\Big\|_{L^{p/2}_tL^{q/2}_x(I\times M)}
    \lesssim N^{2s}\|\nu\|_{\ell^\beta}
\end{equation}
Therefore, any necessary condition for the frequency-localized estimate is also a necessary condition for the corresponding global Strichartz estimate.

\subsection{Fractional Schr\"odinger case for $\alpha>1$}
We first consider the sphere $M=\mathbb{S}^d$ with the standard metric.
Let $N\gg1$ and fix a point $x_0\in M$.
It is well known, see e.g. Sogge \cite{sogge2014hangzhou}, that for each eigenvalue $\lambda_j \approx N$ of $\sqrt{\Delta}$,
we can choose an $L^2$-normalized zonal function $Z_j$ concentrating near $x_0$ in the sense that
$$| Z_j(x) | \approx N^{\frac{d-1}{2}} \text{ for } d_g(x,x_0)\lesssim N^{-1}.$$
Moreover, by selecting distinct eigenvalues, the family $(Z_j)_j$ forms an orthonormal system of cardinality $\approx N$.
Then we obtain 
\begin{equation}\label{nec1}
    \Big\|\sum_j   |e^{it\Delta^{\alpha/2}}Z_j|^2\Big\|_{L^{p/2}_tL^{q/2}_x(I\times M)} \gs N^{d} (N^{-d})^{\frac2q}|I|^{\frac2p}
\end{equation}
since $|e^{it\Delta^{\alpha/2}}Z_j|=|Z_j|$. 

Suppose that the frequency-localized Strichartz estimate \eqref{eq:applocal} holds with Sobolev regularity $s$. 
Then we have the upper bound
\[
    N^{2s}\|(1)_j\|_{\ell^\beta}
    \approx N^{2s}N^{1/\beta}.
\]
Comparing this upper bound with \eqref{nec1}, we obtain
\[
    N^d(N^{-d})^{\frac2q}|I|^{\frac2p}
    \lesssim N^{2s}N^{1/\beta}.
\]
Since $|I|\approx1$ in the present case, this implies
\[
    \frac1\beta\ge d-\frac{2d}{q}-2s.
\]
In particular, hence the estimate \eqref{eq:applocal} holds for all $s\geq\gamma_\alpha(p,q)$, we have the necessary condition
\[
    \frac{1}{\beta}\geq d-\frac{2d}{q}-2\gamma_{\alpha}(p,q) = \frac{2}{p},
\]
where in the last equality we used $\alpha>1$ and hence
$\gamma_\alpha(p,q)=\frac d2-\frac dq-\frac1p$.
This corresponds exactly to the range in the interior of $O A_{d/2} C_{d/2} D$ in Theorem \ref{thm:schodinger}.
However, the sharpness of $\beta$ here applies only to the sphere, since we have improved the range of $\beta$ on the flat torus in Section \ref{sec:impr}.

Next, we recall the counting example, which works on any compact manifold.
Suppose that the frequency-localized Strichartz estimate \eqref{eq:applocal} holds with regularity $s$.
We take an orthonormal basis of the spectral subspace $\left\{\lambda_k \le N\right\}$ and $\nu_j=1$, the local Weyl law gives a lower bound of order $N^d$ for the left-hand side, while the right-hand side is bounded by $N^{2s}N^{d/\beta}$.
Therefore
\[
    N^d\lesssim N^{2s}N^{d/\beta},
\]
and hence
\[
    \frac{1}{\beta} \geq 1-\frac{2s}{d}.
\]
When $\alpha>1$, we have the necessary condition
\[
    \frac1\beta\ge 1-\frac{2\gamma_{\alpha}(p,q)}{d}=\frac1{\beta_{d/2}(p,q)},
\]
This gives the sharpness of the range of $\beta$ in the interior of $O A_{d/2}B$ on any compact manifold.

\subsection{Fractional Schr\"odinger case for $\alpha \in (0,1)$}
When $0<\alpha<1$, we follow the long-time frequency-localized examples in \cite{wang2025strichartz}.
In this case, the corresponding frequency-localized estimates are valid on intervals of length $|I|\approx N^{1-\alpha}$, and the necessary examples should be tested on such intervals.

Fix $x_0\in M$, and let $(e_k)_k$ be an orthonormal eigenbasis in $L^2(M)$ associated with the eigenvalues $(\lambda_k)_k$ of $\sqrt{\Delta}$.
By the pointwise Weyl law, one may choose a fixed $\delta>0$ such that, for large $j$,
\[
    c_j:=\sum_{\lambda_k\in(j-\delta,j]} |e_k(x_0)|^2\approx j^{d-1}.
\]
Define
\begin{equation}\label{eq:fj}
    f_j(x)=c_j^{-1/2}\sum_{\lambda_k\in(j-\delta,j]}e_k(x_0)\overline{e_k(x)}.
\end{equation}
Then $\|f_j\|_{L^2(M)}=1$,
and
\(
f_j(x_0)=c_j^{1/2}\approx j^{(d-1)/2}.
\)
Moreover, by the spectral localization of $f_j$, Bernstein's inequality and the mean value theorem imply that,
\[
|f_j(x)|\gtrsim j^{(d-1)/2},~~
\text{for}~~ d_g(x,x_0)\lesssim j^{-1}.
\]
In particular, for $j\approx N$,
\[
    |f_j(x)|\approx N^{\frac{d-1}{2}}~~ \text{for} ~~d_g(x,x_0)\lesssim N^{-1}.
\]
Note that
$$e^{it\Delta^{\alpha/2}}f_j(x)=c_j^{-1/2}\sum_{\lambda_k\in(j-\delta,j]}e_k(x_0)e^{it(\lambda_k^\alpha-j^\alpha)}\overline{e_k(x)}.$$
Since $0<\alpha<1$, for $\lambda_k\in(j-\delta,j]$ and $j\approx N$, we have
\[
    |\lambda_k^\alpha-j^\alpha|\lesssim N^{\alpha-1}.
\]
Hence the phases remain coherent for $|t|\lesssim N^{1-\alpha}$.
Consequently,
\[
    |e^{it\Delta^{\alpha/2}}f_j(x)|\gtrsim N^{\frac{d-1}{2}}~~
    \text{for}~~ |t|\lesssim N^{1-\alpha},~~ d_g(x,x_0)\lesssim N^{-1}.
\]
Taking an orthonormal family $(f_j)_j$ of cardinality comparable to $N$ with $j\approx N$ and setting $\nu_j=1$, we obtain
\[
    \Big\|\sum_j |e^{it\Delta^{\alpha/2}}f_j|^2\Big\|_{L^{p/2}_tL^{q/2}_x(I\times M)}
    \gtrsim N^{d}(N^{-d})^{\frac2q}N^{\frac{2}{p}(1-\alpha)}.
\]
Comparing this with the frequency-localized Strichartz estimate \eqref{eq:applocal} at regularity $s$,
we get
\[
    N^d(N^{-d})^{\frac2q}N^{\frac{2}{p}(1-\alpha)}
    \lesssim N^{2s}N^{1/\beta},
\]
which implies
\[
    \frac1\beta\ge d-\frac{2d}{q}+\frac{2(1-\alpha)}p-2s.
\]
Since
\(
    \gamma_\alpha(p,q)=\frac d2-\frac dq-\frac{\alpha}{p},
\)
it implies the necessary condition
\[
    \frac1\beta\ge \frac2p.
\]
This gives the necessary condition corresponding to the interior of $O A_{d/2}C_{d/2}D$ in the same long-time frequency-localized sense as in \cite{wang2025strichartz}.

The counting example must also be understood in this long-time sense.
Taking an orthonormal basis of the spectral subspace $\{\lambda_k\le N\}$, setting $\nu_j=1$, and using the interval length $|I|\approx N^{1-\alpha}$, the lower bound has size
\(
    N^d N^{\frac{2}{p}(1-\alpha)}.
\)
On the other hand, the frequency-localized estimate \eqref{eq:applocal} with regularity $s$ gives the upper bound $N^{2s}N^{d/\beta}$.
Thus
\[
    N^d N^{\frac{2}{p}(1-\alpha)}
    \lesssim N^{2s}N^{d/\beta},
\]
which implies
\[
    \frac1\beta
    \ge 1+\frac{2(1-\alpha)}{dp}-\frac{2s}{d}.
\]
Thus we have the necessary condition
\[
    \frac1\beta\ge \frac1{\beta_{d/2}(p,q)}.
\]
This gives the sharpness of the range of $\beta$ in the interior of $O A_{d/2}B$ for $0<\alpha<1$.

\subsection{Wave and Klein-Gordon case}
We finally consider $P=\sqrt{\Delta+m^2}$.
The spectral-cluster example above has an analogue in this case.
Indeed, for a fixed-width spectral cluster $\lambda_k\in(j-\delta,j]$ with $j\approx N$, we have
\[
    \left|\sqrt{\lambda_k^2+m^2}-\sqrt{j^2+m^2}\right|\lesssim 1.
\]
Therefore the corresponding phases remain coherent on a fixed time interval.
Using the same functions as in \eqref{eq:fj}, we obtain
\[
    |e^{it\sqrt{\Delta+m^2}}f_j(x)|\gtrsim N^{\frac{d-1}{2}}~~
    \text{for}~~ |t|\lesssim 1,
    ~~ d_g(x,x_0)\lesssim N^{-1}.
\]
Taking an orthonormal family $(f_j)_j$ of cardinality comparable to $N$, we have the lower bound
\[
    \Big\|\sum_j |e^{it\sqrt{\Delta+m^2}}f_j|^2\Big\|_{L^{p/2}_tL^{q/2}_x(I\times M)}
    \gtrsim N^d(N^{-d})^{\frac2q}.
\]
If the corresponding orthonormal Strichartz estimate \eqref{eq:applocal} holds with Sobolev regularity $s$, then the upper bound for such a frequency-$N$ system is
\[
    N^{2s}\|(1)_j\|_{\ell^\beta}
    \approx N^{2s}N^{1/\beta}.
\]
Thus
\[
    \frac1\beta\ge d-\frac{2d}{q}-2s.
\]
This gives the necessary condition
\[
    \frac1\beta\ge d-\frac{2d}{q}-2\gamma(p,q)=\frac{2}{p},
\]
since $\gamma(p,q)=\frac d2-\frac dq-\frac1p$.
Thus the range of $\beta$ in the interior of $O A_{(d-1)/2}C_{(d-1)/2}D$ in Theorem~\ref{ns2} is sharp on any compact manifold.

However, for the interior of the triangle $O A_{(d-1)/2}B$, the counting example only yields a weaker necessary condition.
Indeed, for the frequency-localized estimate \eqref{eq:applocal} at regularity $s$, taking an orthonormal basis of the spectral subspace $\{\lambda_k\le N\}$ and setting $\nu_j=1$ gives
\[
    N^d\lesssim N^{2s}N^{d/\beta},
\]
and hence
\[
    \frac1\beta\ge 1-\frac{2s}{d}.
\]
It can only give the condition
\[
    \frac1\beta\ge 1-\frac{2\gamma(p,q)}d
    =\frac1{\beta_{d/2}(p,q)}.
\]
This does not reach the threshold
\[
    \frac1{\beta_{(d-1)/2}(p,q)}=\frac2q+\frac{2}{(d-1)p},
\]
which appears in Theorem~\ref{ns2}.
Therefore the sharpness of the range of $\beta$ in the interior of $O A_{(d-1)/2}B$ remains open.
A similar issue also arises in the Euclidean setting.

\vskip2mm
\noindent\textbf{Acknowledgements.} 
AZ is partially supported by National Key R\&D Program of China No. 2024YFA1015300, Beijing Natural Science Foundation No. 1242009, National Natural Science Foundation of China No. 11801536, the China Scholarship Council No. 202506020208 and the Fundamental Research Funds for the Central Universities.  LX is supported by the National Natural Science Foundation of China No. 12101028 and the Fundamental Research Funds for the Central Universities. The authors would like to thank Cheng Zhang for inspiring discussions.

\noindent\textbf{Statements}  The authors have no relevant financial or non-financial interests to disclose. Data sharing is not applicable to this article as no datasets were used.

\bibliography{ns}
	
\bibliographystyle{alpha}

\end{document}